\def\env@dmatrix{\hskip -\arraycolsep
  \let\@ifnextchar\new@ifnextchar
  \extrarowheight=2ex
  \array{*\c@MaxMatrixCols{>{\displaystyle}c}}}
\newenvironment{pdmatrix}
  {\left(\env@dmatrix}
  {\endmatrix\right)}
\newtheorem*{rep@theorem}{\rep@title}
\newcommand{\newreptheorem}[2]{%
\newenvironment{rep#1}[1]{%
 \def\rep@title{#2 \ref{##1}}%
 \begin{rep@theorem}}%
 {\end{rep@theorem}}}
\crefname{question}{Question}{Questions}
\crefname{alpharesults}{Theorem}{Theorems} 
\crefname{alphatheorem}{Theorem}{Theorems}
\xpretocmd{\@adminfootnotes}{\let\@makefntext\BHFN@OldMakefntext}{}{}
\renewcommand\@makefntext[1]{%
  \@ifundefined{@makefnmark}
    {}
    {%
     \renewcommand\@makefnmark{%
       \mbox{%
         \textsuperscript{%
           \normalfont
           \hyperref[\BackrefFootnoteTag]{\@thefnmark}%
         }%
       }\,%
     }%
     \BHFN@OldMakefntext{#1}%
  }%
}
\newtheorem{theorem}{Theorem}[section]
\newtheorem{lemma}[theorem]{Lemma}
\newtheorem{corollary}[theorem]{Corollary}
\newtheorem{proposition}[theorem]{Proposition}
\newtheorem{question}[theorem]{Question}
\newcounter{alpharesults}
\newtheorem{alphatheorem}[alpharesults]{Theorem}
\theoremstyle{definition}
\newtheorem{definition}[theorem]{Definition}
\newtheorem*{definition-nonum}{Definition}
\theoremstyle{remark}
\newtheorem{remark}[theorem]{Remark}
\numberwithin{equation}{section}
\newcommand{\ie}{i.e.~}
\newcommand{\eg}{e.g.~}
\newcommand{\R}{\mathbb{R}}
\newcommand{\Z}{\mathbb{Z}}
\newcommand{\id}{\operatorname{id}}
\newcommand{\intr}{\operatorname{int}}
\newcommand{\beltR}{\mathcal{B}_R}
\newcommand{\hm}{\operatorname{Hom}}
\DeclareMathOperator{\MCG}{MCG}
\definecolor{amaranth}{rgb}{0.75, 
0.17, 0.31} 
\definecolor{carrotorange}{rgb}{0.93, 0.57, 0.13} 
\definecolor{citrine}{rgb}{0.89, 0.82, 0.04} 
\definecolor{dartmouthgreen}{rgb}{0.05, 0.5, 0.06} 
\definecolor{teal}{rgb}{0.0, 0.5, 0.5} 
\definecolor{ballblue}{rgb}{0.13, 0.67, 0.8} 
\definecolor{ceruleanblue}{rgb}{0.16, 0.32, 0.75} 
\definecolor{amethyst}{rgb}{0.6, 0.4, 0.8} 
\definecolor{amber}{rgb}{1.0, 0.75, 0.0} 
\definecolor{burlywood}{rgb}{0.87, 0.72, 0.53} 
\definecolor{dogwoodrose}{rgb}{0.84, 0.09, 0.41} 
\newcommand{\defeq}{\vcentcolon=}
   \def\MR#1{}
\title{On the detection of knotted spheres by their traces in high dimensions}
\author[Bais]{Valentina Bais}
\author[Di Prisa]{Alessio Di Prisa}
\author[Hartman]{Daniel Hartman}
\author[Hsueh]{Chun-Sheng Hsueh}
\author[Kegel]{Marc Kegel}
\author[Merz]{Alice Merz}
\author[Pencovitch]{Mark Pencovitch}
\author[Ray]{Arunima Ray}
\author[Santoro]{Diego Santoro}
\author[Truöl]{Paula Truöl}
\author[Wakelin]{Laura Wakelin}
\address{Scuola Internazionale Superiori di Studi Avanzati (SISSA), Via Bonomea
265, 3413, Trieste, Italy}
\email{\href{mailto:vbais@sissa.it}{vbais@sissa.it}}
\address{Scuola Normale Superiore, Piazza dei Cavalieri 7, 56126 Pisa, Italy}
\email{\url{alessio.diprisa@sns.it}}
\address{Max Planck Institute for Mathematics, Vivatsgasse 7, 53111 Bonn, Germany}
\email{\href{mailto:hartman@mpim-bonn.mpg.de}{hartman@mpim-bonn.mpg.de}}
\email{\href{mailto:truoel@mpim-bonn.mpg.de}{truoel@mpim-bonn.mpg.de},
\href{mailto:paulagtruoel@gmail.com}{paulagtruoel@gmail.com}}
\address{Humboldt-Universit\"at zu Berlin, Rudower Chaussee 25, 12489 Berlin, Germany.}
\email{\href{mailto:chun-sheng.hsueh@hu-berlin.de}{chun-sheng.hsueh@hu-berlin.de}}
\address{Universidad de Sevilla, Dpto.\ de Álgebra,
Avda.\ Reina Mercedes s/n,
41012 Sevilla,
Spain}
\email{\href{mailto:mkegel@us.es}{mkegel@us.es}, \href{mailto:kegelmarc87@gmail.com}{kegelmarc87@gmail.com}}
\address{HUN-REN Alfréd Rényi Institute of Mathematics, Reáltanoda utca 13-15, 1053 Budapest, Hungary}
\email{\href{mailto:merz.alice@renyi.hu}{merz.alice@renyi.hu}}
\address{University of Glasgow, 132 University Place, Glasgow, Scotland}
\email{\href{mailto:m.pencovitch.1@research.gla.ac.uk}{m.pencovitch.1@research.gla.ac.uk}}
\address{The University of Melbourne, Peter Hall Building, 8 Monash Rd, Parkville VIC 3010, Australia}
\email{\href{mailto:aru.ray@unimelb.edu.au}{aru.ray@unimelb.edu.au}}
\address{University of Vienna, Oskar-Morgenstern-Platz 1, 1090 Vienna, Austria}
\email{\href{mailto:diego.santoro95@gmail.com}{diego.santoro95@gmail.com}}
\address{King's College London, Strand, London, WC2R 2LS, United Kingdom}
\email{\href{mailto:laura.1.wakelin@kcl.ac.uk}{laura.1.wakelin@kcl.ac.uk}}
\def\subjclassname{\textup{2020} Mathematics Subject Classification}
\let\csname subjclassname@1991\endcsname=\subjclassname
\subjclass{
57K45, 
57R65, 
57K40, 
57K10, 
57R40.  
}
\keywords{Knotted spheres, knot traces, surgeries, $2$-spheres in $4$-manifolds, RBG construction}
\begin{document}

\begin{abstract}
    For every $n\geq4$, we demonstrate the existence of non-isotopic smooth $(n-2)$-knots in~$S^n$ with diffeomorphic traces by generalising the RBG link construction to all dimensions. Conversely, we prove that for every $n\geq 4$, the unknot in $S^n$ is detected by the diffeomorphism type of its surgery and hence by its trace.  
\end{abstract}

\maketitle

\section{Introduction} 

A \emph{knot in $S^n$}, for $n\geq 3$, is a smooth embedding $K \colon S^{n-2} \hookrightarrow S^n$. The \emph{trace} $X(K)$ of a knot $K$ is the oriented, connected, compact, smooth $(n+1)$-manifold obtained by attaching an $(n-1)$-handle to~$D^{n+1}$ along a tubular neighbourhood of $K$ and smoothing corners. In this article, we study knots in ambient dimension $n\geq4$, where, in contrast to ambient dimension~$n=3$, there is a unique choice of framing (see \Cref{subsec:framings} for a discussion on framings). (Ambient) isotopic knots have orientation-preservingly diffeomorphic traces. Our main result shows that the converse is not true, \ie the oriented diffeomorphism type of the trace $X(K)$ does not detect the isotopy class of the knot $K$.

\begin{alphatheorem}\label{theorem:non-isotopic}
    For every $n\geq 4$, there exist non-isotopic knots $K_1$ and $K_2$ in $S^n$ whose traces $X(K_1)$ and $ X(K_2)$ are orientation-preservingly diffeomorphic. 
\end{alphatheorem}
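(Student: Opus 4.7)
The strategy is to generalise the RBG link construction of Manolescu--Piccirillo from the classical setting of knots in $S^3$ to $(n-2)$-knots in $S^n$ for every $n\geq 4$. The first step is to define an \emph{$n$-dimensional RBG link} as an appropriately embedded triple $L=R\sqcup B\sqcup G$ of spheres in $S^n$, where $R$ and $B$ are unknotted $(n-2)$-spheres and $G$ is an auxiliary sphere linking both in a controlled way. Because framings of codimension-$2$ knots in $S^n$ are unique for $n\geq 4$, no additional framing data is needed, and the definition is cleaner than its classical analogue.

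Next, I would show that every such RBG link $L$ produces a pair of $(n-2)$-knots $K_1, K_2$ in $S^n$ with orientation-preservingly diffeomorphic traces. The argument is by handle calculus: one builds a common $(n+1)$-manifold $W$ from $D^{n+1}$ by attaching handles along $R$, $B$, and $G$. The defining property of the RBG link is that the $G$-handle can be cancelled against the $R$-handle and also, differently, against the $B$-handle; after the first cancellation the leftover handle realises $X(K_2)$, while after the second it realises $X(K_1)$. Hence $X(K_1)\cong X(K_2)$ as oriented $(n+1)$-manifolds. This step is a direct adaptation of the $n=3$ argument and I expect no novel difficulty here.

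The third, and main, step is to exhibit, in every dimension $n\geq 4$, an explicit RBG link whose associated knots $K_1$ and $K_2$ are non-isotopic. A natural approach is to start from an RBG link in $S^3$ for which the associated pair $K_1, K_2$ is already known to be non-isotopic (for instance one of the Manolescu--Piccirillo examples) and iteratively spin, or twist-spin, it to produce RBG links in $S^n$ for each $n\geq 4$. One must verify that the spinning operation is compatible with the RBG structure, so that the spun links remain RBG links in the sense of the first step. To distinguish the two resulting knots in each dimension, one can compare their knot groups, Alexander modules, or Blanchfield pairings, using Zeeman's formula to relate the fundamental groups of spun knot complements to those of the original knots.

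The main obstacle will be this last step: ensuring that the spinning operation preserves the non-isotopy of the pair at every stage. The formal trace-equivalence follows from standard handle calculus, but showing that a classical invariant such as the Alexander polynomial, the knot group, or a higher-order signature remains distinct for $K_1$ and $K_2$ after $(n-3)$ iterations of spinning requires a careful inductive argument. A convenient fallback is to use Alexander-type invariants, which are well behaved under classical spinning, and choose the base $3$-dimensional RBG link so that the two resulting knots in $S^3$ already have distinct Alexander polynomials; this property then propagates to all $n\geq 4$ and immediately distinguishes $K_1$ from $K_2$ in $S^n$.
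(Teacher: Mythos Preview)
Your RBG generalisation and the handle-cancellation argument for producing trace-diffeomorphic pairs are in the right spirit. The paper's setup differs in detail: $R$ is a framed $(n-3)$-sphere (attaching an $(n-2)$-handle), while $B$ and $G$ are $(n-2)$-spheres (attaching $(n-1)$-handles), and it is $B$ and $G$ that each separately cancel against $R$, not $G$ against $R$ and $B$. The paper also does not spin up from a $3$-dimensional example; it builds the RBG manifold directly in each dimension. These are differences of implementation, not of principle.

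The real gap is your distinguishing step. For $n\geq 4$, the inclusion $E_K\hookrightarrow S^n(K)$ induces an \emph{isomorphism} on fundamental groups (see \Cref{proposition:fundamental_group} in the paper): unlike in dimension $3$, the longitude is already null-homotopic in $E_K$, so no relation is added when passing to the surgery. Hence any two knots in $S^n$ with diffeomorphic traces automatically have isomorphic knot groups, and therefore identical Alexander modules, Alexander polynomials, and Blanchfield pairings. Your fallback plan of starting from a $3$-dimensional RBG pair with distinct Alexander polynomials and spinning cannot succeed: once the spun knots share a trace in $S^n$, they are forced to share every invariant derived from $\pi_1$ of the complement. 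This is precisely the obstacle the paper identifies as the main difficulty.

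The paper's resolution is to retain not just $\pi_1(K)$ but the conjugacy class of the meridian $\mu_K$ inside it. Two knots with the same trace have a common knot group $\pi$, but their meridians may sit in $\pi$ as non-conjugate elements. Concretely, the paper counts homomorphisms $\pi\to A_5$ sending the meridian to a fixed $5$-cycle; this count is $6$ for one knot and $1$ for the other, certifying non-isotopy. Any repair of your argument must replace the Alexander-type invariants by something of this peripheral flavour.
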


For classical knots, \ie those in $S^3$, the existence of non-isotopic framed knots with orientation-preservingly diffeomorphic traces follows from~\cite{Akbulut1}. The existence of non-isotopic framed knots in $S^3$ that share a surgery was already observed in~\cite{Lickorish_sharing_surgery}. 

We provide two independent proofs of \Cref{theorem:non-isotopic} using different techniques. The first applies to dimensions $n\geq 4$ and gives infinitely many pairs of knots with orientation-preservingly diffeomorphic traces. The second gives infinite families of such knots for dimensions $n\geq 5$ (see \Cref{infinite_example_intro}) and arbitrarily large families for~$n=4$ (see \Cref{rem:fin_many_2-knots_same_trace}).

\subsection{RBG manifold construction}

The first proof is based on our generalisation of the \emph{RBG link construction} from~\cites{piccirillo:shake-genus,manolescu-piccirillo:RBG,Tagami} to all dimensions. This simple and elegant method for constructing knots in $S^3$ with the same trace or surgery was used in a series of papers to settle a host of important questions and conjectures, \eg in~\cites{piccirillo:conway-knot,piccirillo:shake-genus,Baker_Motegi_non_char,Kegel_Piccirillo,Wakelin_Picirillo_Hayden_any_slope}.
Our generalisation to~\emph{RBG manifolds} yields a direct method for constructing, for 
$n\geq 4$, framed knotted spheres~$\Sigma \colon S^k \hookrightarrow S^n$ with orientation-preservingly diffeomorphic traces.\footnote{The trace is defined analogously for knotted spheres $\Sigma$; see \Cref{sec:background_knots}.} The case of most interest to us is that of knots, \ie when~$k=n-2$. In that case, an RBG manifold is, roughly speaking, an $(n+1)$-manifold $W$ with a handle decomposition consisting of a $0$-handle~$D^{n+1}$, an $(n-2)$-handle, and two $(n-1)$-handles attached to $D^{n+1}$ in such a way that, after an isotopy, each $(n-1)$-handle cancels the $(n-2)$-handle (see \Cref{def:RBG_mfd}). The images of the attaching spheres of the $(n-1)$-handles under the diffeomorphisms given by the handle cancellations then form a pair of knots in~$S^n$ with the same trace (see \Cref{thm:rbg_knots}). Of course, in overly simple situations, these knots will in fact be isotopic (see \Cref{subsec:trivial_cases}). However, using a specific instance of our RBG manifold construction, we show the existence of infinitely many mutually distinct pairs of non-isotopic knots as in \Cref{theorem:non-isotopic}. The precise statement is given in \Cref{theorem:non-isotopic_compl}. For~$n=4$, our construction can be visualised in Kirby diagrams (see~\Cref{subsec:Kirby_diagram}).

\subsection{Distinguishing knots with the same trace}

In \Cref{sec:distinguishing}, we discuss methods by which we can or cannot distinguish knots with orientation-preservingly diffeomorphic traces. In dimension $n=3$, there are many computable knot invariants that can be used to distinguish such knots in $S^3$, \eg the knot group, hyperbolic volume and knot polynomials. In higher dimensions, this is a more difficult task. For~$n \geq 4$, if two knots in~$S^n$ have diffeomorphic traces, then many of their invariants are the same, such as the knot groups and abelian invariants that can be derived from it (see \Cref{proposition:fundamental_group}).

Nevertheless, we successfully use the conjugacy class of the meridians in the knot groups $\pi_1(K_i)$, $i \in \{1,2\}$, to distinguish between knots $K_1$ and $K_2$ with the same trace. The idea is to count the number of homomorphisms of $\pi_1(K_i)$ into a specific finite group $A$ that map an element representing the conjugacy class of the meridian of $K_i$ to a fixed, well-chosen element of $A$. This number is an ambient isotopy invariant of the knot (see~\Cref{subsec:counting_reps}), which suffices to distinguish our pairs of knots~$K_1$ and $K_2$ with the same trace (see \Cref{subsec:explicit_reps}).

\begin{remark}
    As an application of \cref{theorem:non-isotopic}, we disprove a generalisation of the \emph{light bulb theorem} to links. Recall that the classical light bulb trick shows that any 1-knot in $S^1\times S^2$ intersecting $\{*\}\times S^2$ in a single point is isotopic to $S^1\times \{*\}$. Gabai showed that a corresponding result holds for $2$-spheres in $S^2\times S^2$ as well~\cite{LBT}. In \Cref{cor:LBT}, we show that the analogue of these results for 2-component links in $S^2\times S^{n-2}$ does not hold. That is, for each $n\geq 4$, we use our construction to produce a 2-component link $L$ in $S^2\times S^{n-2}$ such that each component is isotopic to the standard $\{z\}\times S^{n-2}$ for $z\in S^2$, but $L$ is not isotopic to $\{z_1,z_2\}\times S^{n-2}$ for~$z_1\neq z_2\in S^2$. For~$n\in\{3,4\}$, such links are easy to construct (see \Cref{rem:LBT}), using the light bulb trick. For $n\geq 5$, we do not have a light bulb theorem, so we need our construction.
\end{remark}

\subsection{Infinite families for \texorpdfstring{$\boldsymbol{n \geq 5}$}{n >=5}}

Our second proof of \Cref{theorem:non-isotopic} is based on a result by Plotnick~\cite{Plotnick1983InfinitelyMD}; see the discussion in \Cref{subsec:intro:Gluck} below. This gives the following generalisation of \Cref{theorem:non-isotopic} for $n\geq 5$.

\begin{alphatheorem}\label{infinite_example_intro}
    Let $n \geq 5$.
   There is an infinite collection $\{X_i\}_{i \geq 1}$ of pairwise non-diffeomorphic smooth $(n+1)$-manifolds such that, for each integer~$i \geq 1$, there exists an infinite collection~$\{K_j^i \}_{j \geq 1}$ of pairwise non-isotopic knots in $S^n$ whose trace is orientation-preservingly diffeomorphic to $X_i$, \ie
    \begin{align*}
        X(K_j^i) \cong X_i \qquad \text{for all integers } \quad i,j \geq 1.
    \end{align*}
    In particular, for each $i \geq 1$, this results in an infinite family of pairwise non-isotopic knots in $S^n$ with orientation-preservingly diffeomorphic traces. 
\end{alphatheorem}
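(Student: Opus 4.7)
The plan is to deduce \Cref{infinite_example_intro} from Plotnick's theorem~\cite{Plotnick1983InfinitelyMD}, which for every $n \geq 5$ produces, for each base knot $K \subset S^n$ of a suitable form (\eg a twist-spun knot), an infinite family $K = K_1, K_2, \ldots$ of pairwise non-isotopic knots all sharing the same exterior. The strategy combines two independent ingredients: (i) show that in the present setting, knots with diffeomorphic exteriors have diffeomorphic traces, thereby producing the horizontal families $\{K_j^i\}_{j \geq 1}$ of \Cref{infinite_example_intro}; and (ii) vary the base knot to produce infinitely many pairwise non-diffeomorphic traces $X_i$.

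For ingredient (i), the key input is that framings of codimension-two knotted spheres in $S^n$ are unique for $n \geq 4$ (\Cref{subsec:framings}), so the tubular neighborhood $\nu(K) \cong S^{n-2} \times D^2 \subset S^n$ is canonically determined by the exterior $E(K) \defeq S^n \setminus \nu(K)$ together with its distinguished meridian. Using the decomposition
\[
X(K) \;\cong\; D^{n+1} \,\cup\, \bigl(E(K) \times [0,1]\bigr) \,\cup\, \bigl(\nu(K) \times [0,1] \cup h^{n-1}\bigr),
\]
one verifies that a diffeomorphism $\phi \colon E(K_1) \xrightarrow{\cong} E(K_2)$ respecting meridians extends to an orientation-preserving diffeomorphism $X(K_1) \xrightarrow{\cong} X(K_2)$; the crucial point is that the third piece above is a standard neighborhood whose boundary automorphism group is larger than that of $\nu(K)$ alone, so gluing diffeomorphisms that might fail to extend across $\nu(K)$ can still extend across $\nu(K) \cup h^{n-1}$. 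Applying Plotnick's theorem to a base knot $K^i$ then yields an infinite family $\{K_j^i\}_{j \geq 1}$ of pairwise non-isotopic knots all with common trace $X_i \defeq X(K^i)$.

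For ingredient (ii), I would appeal to \Cref{proposition:fundamental_group}: the knot group $\pi_1(S^n \setminus K)$ is a diffeomorphism invariant of the trace $X(K)$. It therefore suffices to produce a sequence of base knots $\{K^i\}_{i \geq 1}$, each eligible for Plotnick's construction, with pairwise non-isomorphic knot groups---for instance by applying the construction to twist spins of an infinite family of classical knots with pairwise distinct knot groups. The associated traces $X_i$ are then automatically pairwise non-diffeomorphic. The principal obstacle lies in this simultaneous realisation: one must ensure that Plotnick's construction applies to an infinite sequence of mutually distinguishable base knots while preserving the infinite vertical tower of partners for each. Verifying this compatibility, together with the extension-of-diffeomorphism step underlying (i), constitutes the main technical content of the proof.
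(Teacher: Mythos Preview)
Your proposal rests on a misreading of Plotnick's theorem. The result in~\cite{Plotnick1983InfinitelyMD} produces infinitely many \emph{disk knots} $D^{n-1}\hookrightarrow D^{n+1}$ with diffeomorphic exteriors, not sphere knots. For sphere knots $S^{n-2}\hookrightarrow S^n$ with $n\geq 4$ there are at most \emph{two} knots with a given exterior, and they are related by a Gluck twist; see~\cites{Gluck,browder,lashofshaneson} and the discussion in \Cref{subsec:gluck}. So no infinite family of knots in $S^n$ can share an exterior, and ingredient~(i) of your plan collapses at the outset. Your extension argument (``the boundary automorphism group of $\nu K\cup h^{n-1}$ is larger than that of $\nu K$'') is an attempt to show that exterior diffeomorphisms always extend to trace diffeomorphisms, but this cannot be right in general: the traces $X(K)$ and $X(T(K))$ of a knot and its Gluck twist need not be diffeomorphic (cf.~\Cref{criterion}), even though $K$ and $T(K)$ have the same exterior by construction.

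What the boundaries of Plotnick's disk knots actually share is their \emph{surgery}: each $K_\alpha$ arises as the core of a loop surgery on a fixed manifold $\Sigma^{n-1}\times S^1$ (built from a Brieskorn sphere $\Sigma(p,q,r)$), so $S^n(K_\alpha)\cong\Sigma^{n-1}\times S^1$ for all $\alpha$. Passing from a common surgery to a common trace is where the real work lies, and this is handled by \Cref{Glucktwist}: a surgery diffeomorphism forces either $X(K_1)\cong X(K_2)$ or $X(T(K_1))\cong X(K_2)$. Since the two loop-surgery framings differ exactly by a Gluck twist, one can choose framings consistently so that all traces coincide. Your ingredient~(ii), distinguishing the traces $X_i$ via the knot group, is essentially correct and matches the paper's approach, which varies the Brieskorn sphere $\Sigma(p,q,r)$ and reads off $\pi_1(\partial X_i)\cong\pi_1(\Sigma(p,q,r))\times\Z$.
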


Using the same method, we can construct arbitrarily large families of non-isotopic knots in $S^4$ with orientation-preservingly diffeomorphic traces  (see \Cref{rem:fin_many_2-knots_same_trace}). Moreover, for $n=4$, we also obtain the analogous result to \Cref{infinite_example_intro} in the topological category (see \Cref{Plot4}).

\subsection{Surgeries, traces and Gluck twists}\label{subsec:intro:Gluck}

Our RBG manifold construction only produces \emph{pairs} of knots with orientation-preservingly diffeomorphic traces. To prove the existence of infinitely many knots sharing the same trace as in \Cref{infinite_example_intro}, we need to employ different methods. Our main insight is summarised in the following result. For~$n\geq4$, let~$S^n(K)$ denote the manifold obtained by performing surgery on~$S^n$ along the knot~$K$. See \Cref{subsec:framings} for details.

\begin{alphatheorem}\label{Glucktwist}
    For $n \geq 5$, let $K_1$ and $K_2$ be knots in $S^n$ and let $\varphi \colon S^n(K_1) \to S^n(K_2)$ be an orientation-preserving diffeomorphism between their surgeries. Then either there is an orientation-preserving diffeomorphism $X(K_1) \cong X(K_2)$ or the Gluck twist~$T(K_1)$ is a knot in the standard smooth $n$-sphere with $X(T(K_1)) \cong X(K_2)$.
\end{alphatheorem}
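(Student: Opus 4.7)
The plan is to exploit the dual handle decomposition of the trace. Turning $X(K_i) = D^{n+1} \cup h_i^{n-1}$ upside-down presents it as $(S^n(K_i) \times [0,1]) \cup h^2 \cup h^{n+1}$, where the dual $2$-handle is attached along the belt circle $C_i \subset S^n(K_i)$ of $h_i^{n-1}$ with its induced framing, and the $(n+1)$-handle caps off the resulting $S^n$. Hence $X(K_i)$ is recovered, up to orientation-preserving diffeomorphism, from the pair $(S^n(K_i), C_i)$ viewed as a framed oriented circle in an oriented $n$-manifold. By Kervaire's theorem on high-dimensional knot groups, $\pi_1(S^n(K)) = 1$ for every $n \geq 4$, so both surgery manifolds are simply connected.

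I would transport $C_1$ via $\varphi$ to obtain a framed oriented circle $\varphi(C_1) \subset S^n(K_2)$. Both $\varphi(C_1)$ and $C_2$ are then nullhomotopic embedded circles in the simply connected $n$-manifold $S^n(K_2)$. For $n \geq 5$ each bounds an embedded $2$-disk by general position, and so each can be isotoped into a smoothly embedded $n$-ball where it becomes a standard unknot. Piecing the two isotopies together and invoking the isotopy extension theorem yields a diffeomorphism $\psi \in \mathrm{Diff}^+(S^n(K_2))$ with $\psi(\varphi(C_1)) = C_2$ as oriented embedded circles. The pushforward framing $(\psi \circ \varphi)_* f_{C_1}$ and the intrinsic framing $f_{C_2}$ then differ by an element of $\pi_1(SO(n-1)) \cong \mathbb{Z}/2$.

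If the two framings agree, $\psi \circ \varphi$ is a framed diffeomorphism $(S^n(K_1), C_1) \to (S^n(K_2), C_2)$; it extends across the dual $2$-handles (whose attaching framed circles are now identified) and across the remaining $(n+1)$-ball caps, giving the required orientation-preserving diffeomorphism $X(K_1) \cong X(K_2)$. If instead the framings differ by the nontrivial class, I would invoke the Gluck twist: since the twist $\tau$ is built from a loop $\rho$ in $SO(n-1) \subset \mathrm{Diff}^+(S^{n-2})$ that extends naturally across the disk $D^{n-1}$, the surgery manifold $S^n(T(K_1))$ is canonically identified with $S^n(K_1)$, and under this identification the dual framed circle of $T(K_1)$ coincides with $C_1$ as an oriented embedded circle but with framing twisted by the nontrivial element of $\pi_1(SO(n-1))$. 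For $n \geq 5$, the twisted ambient manifold $T(S^n)$ is additionally orientation-preservingly diffeomorphic to the standard $S^n$, so $T(K_1)$ is genuinely a knot in $S^n$. Applying the matching-framings case to $(T(K_1), K_2)$ then produces $X(T(K_1)) \cong X(K_2)$.

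The main obstacle is the explicit framing analysis of the Gluck twist: verifying both the canonical identification $S^n(T(K_1)) \cong S^n(K_1)$ and the fact that the dual framing is altered by exactly the generator of $\pi_1(SO(n-1))$, together with the statement, specific to $n \geq 5$, that $T(S^n)$ is a standard smooth sphere. A secondary concern is careful tracking of orientations throughout the construction, since the conclusion demands an orientation-preserving diffeomorphism of traces and each step (ambient isotopy, extension over the $2$-handle, capping by $D^{n+1}$, Gluck twist) must be checked to preserve the chosen orientation conventions.
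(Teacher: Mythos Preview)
Your proposal contains a fatal error at the very first step: the claim that $\pi_1(S^n(K)) = 1$ for $n \geq 4$ is false. By Seifert--van Kampen, the inclusion $E_K \hookrightarrow S^n(K)$ induces an isomorphism $\pi_1(E_K) \cong \pi_1(S^n(K))$ (this is \Cref{proposition:fundamental_group} in the paper), and the knot group abelianises to $\mathbb{Z}$, so it is never trivial. Kervaire's theorem characterises which groups occur as knot groups; it does not say they are trivial. Worse, the dual circle $C_i = \gamma_{K_i}$ represents the meridian, which normally generates this group and in particular is \emph{not} nullhomotopic. Hence $\varphi(C_1)$ and $C_2$ need not be freely homotopic in $S^n(K_2)$, and there is no reason for an ambient isotopy carrying one to the other to exist. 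Indeed, the paper later proves exactly your conclusion (\Cref{lemma7}) \emph{under the additional hypothesis} that $\varphi_*\gamma_{K_1}$ and $\gamma_{K_2}$ are conjugate in $\pi_1(S^n(K_2))$; without that hypothesis your argument does not go through.

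The paper's proof circumvents this obstacle by working inside the trace $X(K_2)$, which \emph{is} simply connected (it is $D^{n+1}$ with an $(n-1)$-handle attached, $n-1 \geq 3$). One finds a properly embedded $2$-disc $D \subset X(K_2)$ bounded by $\varphi(\gamma_1)$, shows that $V \coloneqq X(K_2) \setminus \overset{\circ}{\nu} D$ is contractible, and applies the $h$-cobordism theorem to conclude $V \cong D^{n+1}$. The tubular neighbourhood $\nu D$ is then an $(n-1)$-handle attached to $V$ along a knot $A$ in $\partial V$, and the framing dichotomy yields $A = K_1$ or $A = T(K_1)$. A crucial bonus of this approach is that it \emph{proves} $S^n_{K_1} \cong \partial V \cong S^n$ in the second case, something you simply assert (``the twisted ambient manifold $T(S^n)$ is \ldots\ diffeomorphic to the standard $S^n$'') but which is not known in general for $n \geq 5$ and is in fact part of the theorem's conclusion.
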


We will recall the precise definition of the Gluck twist, analogous to the definition for $n=4$ by Gluck~\cite{Gluck}, in \Cref{def:gluck_twist}. Roughly speaking, regluing a tubular neighbourhood of a knot~$K$ in $S^n$ using a non-trivial diffeomorphism of $S^1 \times S^{n-2}$ gives rise to a knot~$T(K)$ in a possibly non-standard $S^n$ (see also \Cref{rem:Gluck_manifold}). 

We will use \Cref{Glucktwist} to deduce \Cref{infinite_example_intro} as follows. For every $n \geq 5$, Plotnick~\cite{Plotnick1983InfinitelyMD} shows the existence of an infinite family $\mathcal{K}_{p,q,r}$ of pairwise inequivalent knots in~$S^n$ with orientation-preservingly diffeomorphic surgeries, starting from a certain Brieskorn sphere $\Sigma(p,q,r)$. By using \Cref{Glucktwist} and paying particular attention to the framings of these knots, we can ensure that all the knots in $\mathcal{K}_{p,q,r}$ have orientation-preservingly diffeomorphic traces. We can apply this method to infinitely many distinct Brieskorn spheres, yielding the infinite families in \Cref{infinite_example_intro}.

For $n=4$, we obtain a weaker result than \Cref{Glucktwist} (see \Cref{Glucktwisttop}). In particular, given an orientation-preserving diffeomorphism $\varphi \colon S^4(K_1) \to S^4(K_2)$ for knots~$K_1, K_2$ in~$S^4$, we can only deduce the existence of one of the diffeomorphisms as in \Cref{Glucktwist} if an additional condition on the Gluck twist along $K_1$ is met. However, \Cref{sec:traces_Gluck} also establishes that any such surgery diffeomorphism $\varphi \colon S^4(K_1) \to S^4(K_2)$ extends to either a trace diffeomorphism $X(K_1)\to X(K_2)$ or to a homeomorphism $X(T(K_1))\to X(K_2)$.\footnote{Note that $T(K_1)$ lies in a possibly non-standard smooth $S^n$, but the homeomorphism type of its trace $X(T(K_1))$ is still well-defined (see the discussion in \Cref{sec:traces_Gluck}).}
The following theorem provides the complete answer as to which of these two cases occurs, depending on the algebraic topology of the traces. This echoes the analogous result of Manolescu--Piccirillo~\cite{manolescu-piccirillo:RBG}*{Theorem~3.7} based on \cite{Boyer1} for extending homeomorphisms in the $n=3$ case.

\begin{alphatheorem}\label{extension_intro}
    Let $K_1$ and $K_2$ be $2$-knots in $S^4$. An orientation-preserving surgery diffeomorphism \(\varphi\colon S^4(K_1) \to S^4(K_2)\) extends to an orientation-preserving trace diffeomorphism $\Phi\colon X(K_1) \to X(K_2)$ if and only if the closed $5$-manifold $X(K_1) \cup_{\varphi} - X(K_2)$ is spin.
\end{alphatheorem}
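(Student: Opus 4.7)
Suppose $\Phi\colon X(K_1) \to X(K_2)$ extends $\varphi$. Then the map that is the identity on $X(K_1)\subset W$ and $-\Phi^{-1}$ on $-X(K_2)\subset W$ defines an orientation-preserving diffeomorphism $W \cong DX(K_1) = \partial\bigl(X(K_1)\times[0,1]\bigr)$. Collapsing the $0$-handle of $X(K_1)$ shows $X(K_1)\simeq S^3$, so $H^2(X(K_1);\Z/2)=0$ and $X(K_1)$ is spin. Therefore $X(K_1)\times[0,1]$ is spin, and so is its boundary, giving $W$ spin.

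\textbf{The ``if'' direction.} Assume $W$ is spin. Turning the handle decomposition $X(K_i)=D^5\cup_{K_i}h^3$ upside down exhibits $X(K_i)$ as $S^4(K_i)\times[0,1]$ with a $2$-handle $h^2_i$ attached along the belt sphere $\gamma_i \subset S^4(K_i)\times\{1\}$ of the original $3$-handle (a circle generating $H_1(S^4(K_i);\Z)\cong\Z$, equipped with its canonical framing), together with a $5$-handle capping a residual $S^4$ boundary component. Since the smooth $5$-disk is the unique compact contractible smooth $5$-manifold bounded by a smooth $S^4$, the $5$-handle cap is determined up to isotopy. By standard handle calculus, extending $\varphi$ to a trace diffeomorphism $\Phi$ is therefore equivalent to producing a framed isotopy in $S^4(K_2)$ from $\varphi(\gamma_1)$ to $\gamma_2$.

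Each framed circle $\gamma_i$ determines a spin structure $s_i$ on $S^4(K_2)$ (transported via $\varphi$ for $i=1$): namely, the unique spin structure that extends over the spin $5$-manifold $X(K_i)$, whose spin structure is itself unique because $H^1(X(K_i);\Z/2)=0$. A spin structure on $W$ restricts on the common boundary to spin structures induced from both sides, so the spinness hypothesis yields $\varphi_\ast s_1=s_2$. Since a framing of a circle in an orientable $4$-manifold is a $\Z/2$-torsor that is recoverable from the induced spin structure, this forces the framings of $\varphi(\gamma_1)$ and $\gamma_2$ to agree. The unframed isotopy of the underlying embeddings then follows from a Haefliger-type metastable argument for $1$-manifolds in $4$-dimensions, once we verify that $\varphi(\gamma_1)$ and $\gamma_2$ are freely homotopic in $S^4(K_2)$.

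\textbf{Main obstacle.} The free-homotopy verification is the delicate step: two embedded circles in a non-simply-connected $4$-manifold can share an $H_1$-class yet lie in distinct conjugacy classes of $\pi_1$. To complete the proof one must show that $\varphi$ sends the meridional conjugacy class of $K_1$ in $\pi_1(S^4(K_1))=\pi_1(S^4\setminus K_1)$ to the meridional conjugacy class of $K_2$. I would attempt this by combining the spin-structure matching $\varphi_\ast s_1=s_2$ with the fact that $\gamma_i$ is a canonical meridional loop in $S^4(K_i)$, mirroring the corresponding point in the $n=3$ analogue of Manolescu--Piccirillo~\cite{manolescu-piccirillo:RBG}*{Theorem~3.7}; this pinning down of the conjugacy class is the step I expect to require the most care.
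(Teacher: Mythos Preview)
Your ``only if'' direction is correct and essentially matches the paper's argument.

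For the ``if'' direction, the paper takes an entirely different route. It observes that $Z = X(K_1) \cup_\varphi -X(K_2)$ is a closed simply-connected $5$-manifold with $H_2(Z) \cong \Z$; under the spin hypothesis, Barden's classification of simply-connected $5$-manifolds then forces $Z \cong S^2 \times S^3$. Hence $Z$ bounds $D^3 \times S^3$, which furnishes an $h$-cobordism from $X(K_1)$ to $X(K_2)$, and the $6$-dimensional smooth $h$-cobordism theorem produces the extension $\Phi$.

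Your strategy of reducing to a framed isotopy between $\varphi(\gamma_1)$ and $\gamma_2$ cannot be completed: the ``main obstacle'' you flag is not merely delicate but is \emph{false} in general. A framed isotopy $\varphi(\gamma_1) \sim \gamma_2$ in $S^4(K_2)$ would, upon reversing the loop surgeries, yield an equivalence of pairs $(S^4,K_1) \cong (S^4,K_2)$ and hence an isotopy $K_1 \simeq K_2$ (this is essentially \Cref{lemma7} and \Cref{gl}). So your argument, if it went through, would prove that $2$-knots with orientation-preservingly diffeomorphic traces are always isotopic---directly contradicting \Cref{theorem:non-isotopic}, which exhibits non-isotopic $2$-knots $K_1, K_2$ with such a trace diffeomorphism $\Phi$. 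For those examples, $\varphi = \Phi|_{\partial}$ makes $Z$ spin (by the direction you already proved), yet $\varphi(\gamma_1)$ and $\gamma_2$ lie in distinct conjugacy classes of the knot group. The spin condition is only a $\Z/2$ constraint (there being exactly two spin structures on $S^4(K_i)$) and cannot pin down conjugacy classes in a non-abelian group; nor does the $n=3$ analogue you cite proceed by matching meridians. The paper's $h$-cobordism approach sidesteps this issue entirely: the $\Phi$ it produces need not carry one dual curve anywhere near the other.
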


\subsection{Detection results}

Neither \Cref{theorem:non-isotopic} nor its proofs prevent particular knots from being detected by their traces. 
For example, when $n=3$, it follows from Property R~\cite{gabai:property_R} that if $S^1\times S^2$ is obtained by surgery along a knot in $S^3$, then the knot is isotopic to the unknot. We prove a higher-dimensional analogue. 
Here, an \emph{unknot}~$U$ in~$S^n$, for $n\geq 3$, is an embedding~$S^{n-2}\hookrightarrow S^n$ that extends to an embedding $D^{n-1} \hookrightarrow S^n$. Up to (ambient) isotopy, this determines the unknot uniquely.

\begin{alphatheorem}
\label{theorem:unknot}
    For $n\geq 4$, let $K$ be a knot in~$S^n$. Suppose that its surgery~$S^n(K)$ is (possibly orientation-reversingly) diffeomorphic to the surgery~$S^n(U) \cong S^{n-1} \times S^1$ of the unknot $U$. 
    Then $K$ is isotopic to $U$. 
    In particular, if $K$ and $U$ have diffeomorphic traces, then $K$ is isotopic to $U$.
\end{alphatheorem}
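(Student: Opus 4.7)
The plan is to show that the dual $1$-knot $K^{\ast} \subset S^n(K)$, defined as the core of the $(n-1)$-handle attached during the surgery, is isotopic together with its framing to the analogous dual knot $U^{\ast} = \{p\} \times S^1 \subset S^{n-1} \times S^1$ arising from the surgery on the unknot. Since reversing the surgery on $K^{\ast}$ recovers the pair $(S^n, K)$, this identification will force $K$ to be isotopic to $U$.

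Let $\varphi \colon S^n(K) \to S^{n-1} \times S^1$ be the hypothesised diffeomorphism, write $E(K) = S^n \setminus \nu(K)$ for the knot exterior, and let $\mu_K$ denote the meridian of $K$. A short Mayer--Vietoris computation applied to the decomposition $S^n(K) = E(K) \cup (D^{n-1} \times S^1)$ identifies $[K^{\ast}] = [\mu_K]$ as a generator of $H_1(S^n(K)) \cong \mathbb{Z}$, so $\varphi(K^{\ast})$ represents a generator of $\pi_1(S^{n-1} \times S^1) = \mathbb{Z}$ and is freely homotopic to $U^{\ast}$. Because $\varphi(K^{\ast})$ has codimension $n - 1 \geq 3$ in $S^{n-1}\times S^1$, the classical principle that codimension-$\geq 3$ embeddings of $S^1$ are isotopic if and only if they are homotopic produces an ambient isotopy of $S^{n-1} \times S^1$ carrying $\varphi(K^{\ast})$ onto $U^{\ast}$; one can further arrange that $\varphi$ identifies tubular neighbourhoods.

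The remaining, and main technical, step is to match the framings of $K^{\ast}$ and $U^{\ast}$, the ambiguity being an element of $\pi_1(SO(n-1)) \cong \mathbb{Z}/2$ for $n \geq 4$. If the framings disagree, I would precompose $\varphi$ with a global Gluck-type twist of $S^{n-1} \times S^1$ along $U^{\ast}$: the defining rotation loop $r_\theta \in SO(n-1)$ acting on $\partial \nu(U^{\ast}) = S^{n-2} \times S^1$ extends linearly to $D^{n-1}$, and therefore extends to both halves of the splitting $S^{n-1} \times S^1 = \nu(U^{\ast}) \cup \overline{(S^{n-1} \times S^1) \setminus \nu(U^{\ast})}$, each diffeomorphic to $D^{n-1} \times S^1$. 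The resulting self-diffeomorphism of $S^{n-1} \times S^1$ fixes $U^{\ast}$ setwise and realises the required framing change.

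With framings matched, $\varphi$ restricts to a diffeomorphism $E(K) \to E(U) \cong D^{n-1} \times S^1$ sending $\mu_K$ to $\mu_U$, because the meridians of $K$ and $U$ correspond under the surgery duality to the longitudes of $K^{\ast}$ and $U^{\ast}$, which are now identified by the framing-preserving $\varphi$. Since framings of codimension-$2$ knots in $S^n$ are unique for $n \geq 4$ (classified by $\pi_{n-2}(S^1) = 0$), this restriction extends to a diffeomorphism $(S^n, K) \to (S^n, U)$. Pulling back an embedded $(n-1)$-disk bounded by $U$ exhibits $K$ as the boundary of an embedded disk in $S^n$, so $K$ is isotopic to $U$. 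The argument is insensitive to whether $\varphi$ preserves or reverses orientation, since the unknot is invariant under every ambient self-diffeomorphism of $S^n$ up to isotopy.
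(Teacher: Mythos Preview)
Your approach is genuinely different from the paper's and more direct. The paper first shows $E_K\cong D^{n-1}\times S^1$ by matching the dual curves, then invokes the deep unknotting results of Levine, Shaneson and Trotter (for $n\ge 5$) and Gluck's theorem (for $n=4$) to conclude that $K$ is unknotted, and finally appeals to Shaneson's theorem that two unknotted embeddings with diffeomorphic surgeries are isotopic. Your framing-matching via the global rotation $(y,\theta)\mapsto(R_\theta y,\theta)$ of $S^{n-1}\times S^1$ is a nice elementary replacement for this machinery and gives a uniform argument for all $n\ge 4$.

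There is, however, a genuine gap in your last step. You conclude that pulling back a disc bounded by $U$ ``exhibits $K$ as the boundary of an embedded disk in $S^n$, so $K$ is isotopic to $U$.'' This only shows that the \emph{image} $K(S^{n-2})$ bounds a disc, i.e.\ that $K$ is \emph{unknotted} in the sense of \Cref{def:unknotted embedding}. As stressed in \Cref{REM:embeddings}, when $\MCG(S^{n-2})\neq 1$ an unknotted embedding need not be isotopic to the unknot $U$ as a \emph{parametrised} embedding; $K$ could differ from $U$ by precomposition with a nontrivial element of $\MCG(S^{n-2})$. This is exactly why the paper's proof finishes by citing \cite{shaneson1968embeddings}*{Theorem~1.3}. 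Relatedly, your justification for extending $\varphi|_{E(K)}$ over $\nu K$ (``framings of codimension-$2$ knots are unique'') does not by itself explain why $\varphi|_{\partial E(K)}$ extends over $S^{n-2}\times D^2$; the Gluck twist of $S^{n-2}\times S^1$ is an example that extends over $D^{n-1}\times S^1$ but not over $S^{n-2}\times D^2$.

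Both issues are repairable within your framework. Once the framings of $K^\ast$ and $U^\ast$ agree, a further isotopy (uniqueness of tubular neighbourhoods together with the null-homotopy in $\operatorname{SO}(n-1)$) makes $\varphi$ equal to the identity on a tubular neighbourhood of $K^\ast$, so that $\varphi|_{\partial E(K)}=\id_{S^{n-2}\times S^1}$. Extending by the identity over $S^{n-2}\times D^2$ then produces an orientation-preserving diffeomorphism $h\colon S^n\to S^n$ with $h\circ K=U$ as parametrised embeddings, and \Cref{lem:equivalence_implies_iso} converts this equivalence into an ambient isotopy.
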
 

We prove \Cref{theorem:unknot} in \Cref{sec:property_R} as a corollary of \Cref{thm: property R new}, which is a more general detection result for \emph{unknotted} embeddings $S^{n-2} \hookrightarrow S^n$. We call a knot $K\colon S^{n-2}\hookrightarrow S^n$ \emph{unknotted} if its image bounds a ball (see \Cref{def:unknotted embedding}). The distinction between unknots in $S^n$ and unknotted embeddings $S^{n-2} \hookrightarrow S^n$ is a subtle one, related to whether knots are defined as embeddings or their images (see \Cref{REM:embeddings}). 

There are other unknot detection results in high dimensions in the literature. For instance, it was shown in  \cites{shaneson1968embeddings,Levine,Levine2,Trotter} that if the exterior of a knot in $S^n$, for $n\geq 5$, is homotopy equivalent to a circle, then it is unknotted. It remains an open question whether a 2-knot in $S^4$ with infinite cyclic fundamental group of the complement is (smoothly) unknotted. Such knots are known to be topologically unknotted~\cite{FQ:book}*{Theorem~11.7A}.

The key observation in the proof of \Cref{thm: property R new,theorem:unknot} is that under their hypotheses, 
the exterior of the knot $K$ is diffeomorphic to the exterior of~$U$, \ie $D^{n-1} \times S^1$. This uses the fact that the fundamental group of the surgery along an unknotted embedding is isomorphic to $\Z$ and thus has exactly two possible generators. The proof of \Cref{thm: property R new} is completed for $n\geq 5$ by the results mentioned in the previous paragraph. For $n=4$ we appeal to~\cite{Gluck}, where it was shown that if two 2-knots have diffeomorphic exteriors, then either they are isotopic or one is isotopic to the Gluck twist of the other.

\subsection{Structure} 

In \Cref{sec:prelims}, we first present the background material that will be required later on. \Cref{sec:construction} then outlines the construction of RBG manifolds, before \Cref{sec:distinguishing} focuses on knot invariants. Together, these tools allow us to complete the proof of \Cref{theorem:non-isotopic}. In \Cref{sec:traces_Gluck,sec:inf_family}, we first prove \Cref{Glucktwist,extension_intro}, then \Cref{infinite_example_intro}. \Cref{sec:property_R} contains the proof of \Cref{theorem:unknot}. \Cref{sec:questions} concludes the paper with a discussion of open questions.

\subsection{Conventions}

Unless stated otherwise, all manifolds are assumed to be oriented, connected, compact and smooth.
Maps between such manifolds are assumed to be smooth. Diffeomorphisms are assumed to be orientation-preserving and denoted by~$\cong$. In \Cref{sec:traces_Gluck,sec:inf_family}, we will in some places also be concerned with orientation-preserving 
homeomorphisms, 
denoted by~$\approx$. Homology groups $H_\ast(\cdot)$ without specified coefficient rings are understood to have $\Z$ coefficients.

The oriented diffeomorphism type of the surgery or trace of a knot $K$ is independent of the orientation of $K$. Thus, all results in the introduction are meant to be understood as results for unoriented knots. However, for our proofs, the orientations are helpful to precisely describe certain constructions. So in the rest of the text, we will assume all our knots to be oriented.

\subsection*{Acknowledgements}

This project began while the authors were at Oberwolfach for the MATRIX-MFO tandem workshop `Invariants in Low-Dimensional Topology: Combinatorics, Geometry, and Computation', organised by Stefan Friedl, Joan Licata, Stephan Tillmann and PT. We thank the organisers for this great event and the~MFO for providing an excellent research environment.

We would like to thank Daniel Galvin, Paolo Lisca and Eric Stenhede for useful discussions, and we are grateful to David Gay, Geunyoung Kim, Lisa Piccirillo and Mark Powell for helpful comments on a first draft.

\subsection*{Individual grant support}

VB has been partially supported by GNSAGA – Istituto Nazionale di Alta Matematica ‘Francesco Severi’, Italy. DH, AR and PT would like to thank the Max Planck Institute for Mathematics in Bonn for its hospitality and support. CSH is supported by the Claussen-Simon-Stiftung and is a member of the Berlin Mathematics Research Center MATH+ (EXC-2046/1, project ID: 390685689), funded by the Deutsche Forschungsgemeinschaft (DFG) under Germany’s Excellence Strategy. MK is supported by the DFG, German Research Foundation, (Project: 561898308); by a Ram\'on y Cajal grant (RYC2023-043251-I) and the project PID2024-157173NB-I00 funded by MCIN/AEI/10.13039/501100011033, by ESF+, and by FEDER, EU; and by a VII Plan Propio de Investigación y Transferencia (SOL2025-36103) of the University of Sevilla. AM acknowledges the support of the CDP C2EMPI, as well as the French State under the France-2030 programme, the University of Lille, the Initiative of Excellence of the University of Lille, the European Metropolis of Lille for their funding and support of the R-CDP-24-004-C2EMPI project. DS is supported by the FWF project P 34318 “Cut and Paste Methods in Low Dimensional Topology”. LW~acknowledges that this work was supported by the Additional Funding Programme for Mathematical Sciences, delivered by EPSRC (EP/V521917/1) and the Heilbronn Institute for Mathematical Research (HIMR), as well as the Simons Collaboration on New Structures in Low-Dimensional Topology.

\section{Background on knotted spheres}\label{sec:prelims}

The results presented in this section are well known in the field. However, some of them are difficult to find in the literature, so we provide further discussion. Moreover, we will fix some notation and conventions.

\subsection{Knotted spheres}\label{sec:background_knots}

Let $M^n$ be an (oriented, connected, compact, smooth) $n$-manifold with $n \geq 3$ and suppose that~$1 \leq k \leq n$. A \emph{knotted ($k$-)sphere in $M$} is a smooth embedding $\Sigma \colon S^{k} \hookrightarrow M$, where $S^k$ is endowed with the standard orientation. We will be particularly interested in \emph{knots}, which correspond to the case $k=n-2$, \ie a \emph{knot in $M$} is a smooth embedding $K \colon S^{n-2} \hookrightarrow M^n$. We will study knotted spheres (in particular, knots) up to \emph{isotopy} or equivalently \emph{ambient isotopy}. See also~\Cref{sec:equivalence}.\footnote{Recall that the isotopy extension theorem implies that two knotted spheres are ambient isotopic if and only if they are isotopic.}

The image of an embedding $\Sigma \colon S^{k} \hookrightarrow M$ defines an oriented smooth submanifold~$\Sigma (S^{k})$ of~$M$ of codimension~$n-k$. There is a unique closed tubular neighbourhood (up to isotopy of tubular neighbourhoods) of~$\Sigma$ in~$M$~\cite{hirsch}, denoted by $\nu \Sigma$. We will usually think of $\nu \Sigma$ as (the isotopy class of) a submanifold of $M$. We will explicitly stress when we need to keep track of a parametrisation of $\nu \Sigma$, and for this reason we will discuss framings in a subsequent section. We denote the \emph{exterior} of $\Sigma$ by~$E_\Sigma \defeq M \setminus \overset{\circ}{\nu} \Sigma$, where $\overset{\circ}{\nu} \Sigma$ denotes the interior of $\nu \Sigma$. 

The \emph{unknot} $U$ in $S^n$ is the (isotopy class of the) embedding $S^{n-2} \hookrightarrow S^n$ which extends to an embedding $D^{n-1} \hookrightarrow S^n$. By the disc theorem~\cite{palais}, equivalently we could define the unknot to be the (isotopy class of the) embedding given by the composition of standard inclusions $S^{n-2} \hookrightarrow \R^{n-1} \hookrightarrow S^{n-1} \hookrightarrow S^{n}$.

\begin{remark}\label{REM:embeddings}
    Note that there is a subtle difference between the notions of knotted spheres as we defined them, \ie smooth embeddings $\Sigma \colon S^k \hookrightarrow M^n$, and oriented embedded spheres $\Sigma(S^k) \subseteq M^n$. This difference arises due to the fact that the mapping class group~$\MCG(S^k)$ is not trivial in general~\cite{milnor_exotic_7-spheres}, so we can have different embeddings with the same image that are not a priori isotopic. For~$k\in \{1,2,3\}$, since the mapping class group of $S^k$ is trivial~\cites{smale:2-sphereMCG,cerf}, this distinction does not arise. More precisely, for $k\in \{1,2,3\}$, studying parametrised embeddings of~$S^k$ is equivalent to studying oriented embedded $k$-spheres. When $k\geq 4$, the two theories coincide exactly when $S^{k+1}$ has a unique smooth structure. Indeed, by the $h$-cobordism theorem~\cite{smale}, it is known that every exotic smooth structure on~$S^{k+1}$, for $k+1\geq 5$, arises from a non-trivial element in~$\MCG(S^k)$, corresponding to the gluing map of the two hemispheres of $S^{k+1}$ along the equator. See for example~\cite{kosinski:differential_manifolds}*{Chapter~VIII} for details. It is known that $S^{k+1}$ has a unique smooth structure for $k\in \{0,1,2, 4, 5, 11, 55, 60\}$ and it is conjectured that these are the only values of $k$ for which this is true, see for example \cite{behrens2020detecting} and \cite{KervaireMilnor}.

    As a consequence, we define the unknot as above, and not as an embedding of a sphere whose \emph{image} bounds an $(n-1)$-ball as a submanifold. We will call the latter notion an \emph{unknotted} sphere, see \Cref{def:unknotted embedding}. The unknot defines an unknotted embedding, but the converse is generally not true.
    
    Of course, many invariants associated to a knotted sphere, such as its exterior or invariants derived from it, do not depend on the parametrisation. On the other hand, we will need to work with our definition using concrete embeddings in order for the notions of surgery and trace (see below) along a knotted sphere~$\Sigma$ to be well-defined. For the construction in \Cref{subsec:RBG_constr_explicit,subsec:nDRBG}, we will first define embedded spheres and later parametrise them.  
\end{remark}

\subsection{Framings, surgeries and traces}\label{subsec:framings}

For $n \geq 3$ and $1 \leq k \leq n$, let $M^n$ be an $n$-manifold as above and let~$\Sigma \colon S^k \hookrightarrow M^n$ be a knotted sphere. A \emph{framing} of $\Sigma$ is a trivialisation of the normal bundle of $\Sigma(S^k)$, \ie an isomorphism $f$ of oriented vector bundles from $TM/T\Sigma(S^k)$ to $\Sigma(S^k)\times \R^{n-k}$. The pair $(\Sigma,f)$ is called a \emph{framed knotted sphere}, or, in case of codimension $n-k=2$, a \emph{framed knot}. 

A framed knotted sphere $(\Sigma \colon S^k \hookrightarrow M^n,f)$ determines the isotopy class of an embedding $\hat{\Sigma} \colon S^{k} \times D^{n-k} \hookrightarrow M$. The result of \emph{surgery on $M$ along $(\Sigma,f)$} is the oriented, connected, compact, smooth $n$-manifold
\begin{align*}
    M_f(\Sigma)\defeq (M \setminus \intr \hat{\Sigma}(S^{k} \times D^{n-k})) \cup_{\partial} 
( D^{k+1}\times S^{n-k-1})
\end{align*} 
with gluing map $\hat{\Sigma}|_{S^{k}\times S^{n-k-1}}$. This manifold is well-defined since, up to orientation-preserving diffeomorphism, it is independent of the choices involved. In the above description of~$M_f(\Sigma)$, there is a preferred embedding of $D^{k+1}\times S^{n-k-1}$, which we can view as knotted sphere~$\{0\}\times S^{n-k-1}$ with a preferred framing. If we surger on that framed knotted sphere, we recover~$M$; see \eg~\cite{gompf-stipsicz:book}*{Section 5.2}. This procedure is often called \emph{reversing the surgery}. For~$k=n-2$, we will also call it \emph{loop surgery} and $\gamma_\Sigma \defeq \{0\} \times S^1 \subseteq M_f(\Sigma)$ the \emph{dual curve} of $(\Sigma,f)$. 

We will also use the following notation. Let~$W$ be an $(n+1)$-manifold and let $(\Sigma,f)$ be a framed knotted $k$-sphere in~$\partial W$. Then we denote by
\begin{align*}
    W \cup h_{k+1}(\Sigma)
\end{align*}
the $(n+1)$-manifold that is obtained from $W$ by attaching an $(n+1)$-dimensional $(k+1)$-handle~$h_{k+1}=D^{k+1}\times D^{n-k}$ along the framed $k$-sphere~$\Sigma$. We denote the belt sphere of $h_{k+1}(\Sigma)$ by $\mathcal{B}_\Sigma$. Note that $W \cup h_{k+1}(\Sigma)$ inherits a canonical orientation from the orientation of~$W$ and (after smoothing corners) a preferred smooth structure up to diffeomorphism; see~\cite{gompf-stipsicz:book}*{Sections 1.3 and 4.1} for details. We can and we will usually assume that handles are attached in (weakly) increasing order of index; see~\cite{gompf-stipsicz:book}*{Proposition 4.2.7}.

With this notation at hand, the \emph{trace} of a framed knotted $k$-sphere $(\Sigma,f)$ in $S^n$ is the 
oriented, connected, compact, smooth $(n+1)$-manifold (again well-defined up to orientation-preserving diffeomorphism)
\begin{align*}
    X_f(\Sigma) \defeq D^{n+1} \cup h_{k+1}(\Sigma).
\end{align*}
Note that $\partial X_f(\Sigma)=S^n_f(\Sigma)$. As the notation suggests, the oriented diffeomorphism type of the surgery or trace depends on the framed knotted sphere $(\Sigma,f)$, but it is independent of the orientation of $\Sigma$.

\begin{remark}\label{rem:framings}
    The existence of a trivial normal bundle of a knotted sphere $\Sigma \colon S^{k} \hookrightarrow M^n$ and its possible framings are determined by algebraic topology data as follows. Orientable vector bundles of rank $n-k$ over $S^{k}$, and thus the possible normal bundles of~$\Sigma$, are classified by $\pi_{k-1}(\operatorname{SO}(n-k))$; meanwhile, the possible framings of the trivial bundle of rank $n-k$ over $S^{k}$ are classified by $\pi_{k}(\operatorname{SO}(n-k))$ (see \eg~\cite{gompf-stipsicz:book}*{Chapter~4}).

    We will mostly be interested in knotted spheres of codimension $n-k=2$, \ie knots. As a consequence of the above classification, knots always have a trivial normal bundle if the ambient manifold has dimension~$n\neq 4$, since $\pi_{n-3}(\operatorname{SO}(2))$ is trivial if and only if $n\neq 4$. For $n = 4$, an embedded sphere $\Sigma(S^2)$ in $M$ has a trivial normal bundle if and only if its homological self intersection number~$\Sigma \cdot \Sigma$ vanishes. The latter is automatically fulfilled if $H_2(M)$ vanishes; for example, if $M=S^4$. Since~$\pi_{n-2}(\operatorname{SO}(2))$ is trivial if and only if $n\neq 3$, a knot $K \colon S^{n-2} \hookrightarrow M^n$ with trivial normal bundle has a \emph{unique} framing (up to isotopy) if and only if $n \neq 3$.  For~$n=3$, the framings are in bijection with the integers; see~\cite{gompf-stipsicz:book}*{Section 4.5} for further discussion.
\end{remark}

\textbf{Convention:} Whenever there is a unique choice of framing for a knotted sphere~$\Sigma$, for example if it is a knot in an ambient manifold $M^n$ of dimension $n \neq3$ as in \Cref{rem:framings}, we will just omit the framing $f$ from the notation of $(\Sigma,f)$, $M_f(\Sigma)$ and~$X_f(\Sigma)$.

\subsection{Tubing}
\label{subsubsec:tubing}

For $n \geq 4$, let $K_1$ and $K_2$ be knots in an $n$-manifold~$M$. Suppose that~$\beta \colon D^1 \hookrightarrow M$ is an arc whose endpoints lie on $K_1(S^{n-2})$ and $K_2(S^{n-2})$, and whose image is otherwise disjoint from the image of $K_1$ and $K_2$. Thicken $\beta$ to an embedding $h_{\beta} \colon D^1 \times D^{n-2} \hookrightarrow M$ such that the image of $\partial D^1 \times D^{n-2}$ lies on~$K_1(S^{n-2})$ and $K_2(S^{n-2})$. Now, modify $K_1$ and $K_2$ by removing the pair of balls $h_{\beta}(\partial D^1 \times D^{n-2})$ from their images and gluing in the annulus $h_{\beta}(D^1 \times \partial D^{n-2})$. This operation is called \emph{tubing~$K_1$ and~$K_2$ along $\beta$}. We denote the resulting submanifold of $M$ by~$K_1 \#_{\beta} K_2$. We will always choose the embedding $h_\beta$ such that $K_1 \#_{\beta} K_2$ has an orientation that restricts to the one of $K_1(S^{n-2})$ and $K_2(S^{n-2})$. Notice that such thickenings $h_{\beta}$, as submanifolds, are parametrised by isotopy classes of paths with fixed endpoints in~$\text{Gr}^+_{n-2}(\mathbb{R}^{n-1})$, the Grassmannian of oriented $(n-2)$-planes in $\mathbb{R}^{n-1}$, using uniqueness of tubular neighbourhoods. Since $\text{Gr}^+_{n-2}(\mathbb{R}^{n-1})$ is simply connected for $n \geq 4$, it follows that there is a unique choice of thickening~$h_{\beta}$ up to isotopy. So if we consider tubing as an operation on oriented submanifolds, and this is the case we will be interested in, then the resulting knot is unique up to ambient isotopy (as a submanifold) and only depends on the isotopy class of the arc~$\beta$ rel boundary. 

If one considers tubing as an operation on parametrised knots, \ie if one wants to define a new embedding $S^{n-2} \hookrightarrow S^n$, then there are a priori two ways to tube~$K_1$ and $K_2$ along $\beta$. See \eg \cite{LBT}*{Remark 5.3}, where this operation of tubing is discussed for surfaces in $4$-manifolds.

Note that the above described operation is usually called \emph{band sum} in dimension~$n=3$. In this case, we have to be careful to specify the embedding~$h_{\beta}$, for which we have infinitely many choices parametrised by $\Z$.

\subsection{Meridians and the knot group}\label{subsec:meridians}

Let $K \colon S^{n-2} \hookrightarrow M^n$ be a knot in $M$ with trivial normal bundle. The \emph{meridian} of~$K$ is an oriented, simple, closed curve~$\mu_K$ in~$M$ that is isotopic to the oriented boundary $\{\text{pt}\} \times \partial D^2 $ of a transverse disc~$\{\text{pt}\} \times  D^2 $ in the tubular neighbourhood $\nu K \cong S^{n-2} \times D^{2}$ in $M$. Sometimes we will also denote by $\mu_K$ the meridian seen as a curve in the knot exterior $E_K$. By abusing notation, we will also write~$\mu_K$ for the conjugacy class of the push-forward of the meridian of~$K$ into the \emph{knot group} $\pi_1(K) \defeq \pi_1(E_K)$. Note that the knot group (but not its isomorphism type) depends on the choice of a base point~$x_0 \in M$. In general, the meridian~$\mu_K$ is not a curve based at $x_0$. So to see it as an element of~$\pi_1(K)$, we choose a path~$w$ from~$x_0$ to the base point of the curve~$\mu_K$. Different choices of $w$ lead to a conjugation, so the conjugacy class of $\mu_K$ in the knot group is well-defined.

\subsection{Ribbon knots}\label{sec:ribbon}

A knot $K \colon S^{n-2} \hookrightarrow S^n$ is \emph{ribbon} if it bounds an immersed disc~$D \colon D^{n-1} \looparrowright S^n$ with only ribbon singularities. This means that the components of the singular set of~$D$ are $(n-2)$-discs whose boundary $(n-3)$-spheres either lie on $D(\partial D^{n-1})$ or are disjoint from it. We call $D$ a \emph{ribbon disc}. Pushing the interior of~$D(D^{n-1})$ appropriately into the ball $D^{n+1}$ bounded by $S^n$ produces an embedded disc $D^{n-1} \hookrightarrow D^{n+1}$ with boundary $K$, which we call an \emph{embedded ribbon disc} and, by slight abuse of notation, also denote by $D$. Forming the \emph{double} of this embedded ribbon disc $D \colon D^{n-1}\hookrightarrow D^{n+1}$, \ie the union of two copies of $D$ in~$D^{n+1}$ glued along their boundaries using the identity map, gives rise to a ribbon $(n-1)$-knot
in~$S^{n+1}$; see for example~\cite{Carter_Kamada_Saito}*{Section 2.2.1}.

\subsection{Equivalence relations on knots} 
\label{sec:equivalence}

For classical knots, \ie knots in $S^3$, it is well known that (orientation-preserving) diffeomorphisms of pairs and (ambient) isotopies are equivalent notions. The harder direction of this equivalence follows from Cerf's result \cite{cerf} that the mapping class group of $S^3$ is trivial. However, the mapping class groups of $n$-spheres for $n\geq 6$ are generally non-trivial~\cite{milnor_exotic_7-spheres}, and it is an open question whether the mapping class group of~$S^4$ is trivial. Nonetheless, the two notions of equivalence of knots still agree in higher dimensions, as we show next. We first define precisely which notions we consider. 

\begin{definition}\label{def:equivalence}
Let $n \geq 3$ and $1 \leq k \leq n$. Two framed knotted $k$-spheres $(\Sigma_1,f_1)$ and $(\Sigma_2,f_2)$ in an $n$-manifold~$M^n$ are \emph{equivalent} if there exists an orientation-preserving diffeomorphism~$h \colon M \to M$ such that $h\circ \Sigma_1 =\Sigma_2$ and the following diagram commutes:
\[
\begin{tikzcd}
    TM/T\Sigma_1(S^k) \arrow[r, "f_1"] \arrow[d, "\text{T}h"] & \Sigma_1(S^k)\times \R^{n-k} \arrow[d, "h\times \text{id}"]\\
    TM/T\Sigma_2(S^k) \arrow[r, "f_2"] & \Sigma_2(S^k)\times \R^{n-k}.
\end{tikzcd}
\] 
We will call such a map $h$ an \emph{equivalence} between $(\Sigma_1, f_1)$ and $(\Sigma_2, f_2)$.
\end{definition}

Note that the term ``equivalence'' is also used slightly differently in the literature when knots are defined as submanifolds.

\begin{definition}\label{def:amb_iso}
Let $n \geq 3$ and $1 \leq k \leq n$. Two framed knotted $k$-spheres $(\Sigma_1,f_1)$ and $(\Sigma_2,f_2)$ in an $n$-manifold~$M^n$ are \emph{ambient isotopic} if there exists an equivalence between $(\Sigma_1,f_1)$ and $(\Sigma_2,f_2)$ which is isotopic to $\operatorname{id}_M$. 
\end{definition}

The following lemma is a standard result for unframed spheres, which we found discussed at~\cite{mathoverflow}. We provide a proof in the setting of framed knotted spheres, as this 
is what we will need presently.

\begin{lemma}\label{lem:equivalence_implies_iso}
    Let $n \geq 3$ and $1 \leq k \leq n$.
    Two framed knotted $k$-spheres $(\Sigma_1,f_1)$ and~$(\Sigma_2,f_2)$ in $S^n$ are equivalent if and only if they are ambient isotopic. 
    \end{lemma}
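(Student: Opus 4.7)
The ``$\Leftarrow$'' direction is immediate from \Cref{def:amb_iso}. For ``$\Rightarrow$'', let $h\colon S^n\to S^n$ be a given equivalence. The plan is to post-compose $h$ with a self-equivalence $g$ of $(\Sigma_2,f_2)$ so that $g\circ h$ is isotopic to $\id_{S^n}$; the composition $g\circ h$ is then itself an equivalence between $(\Sigma_1,f_1)$ and $(\Sigma_2,f_2)$ (as a composition of equivalences) that is isotopic to the identity, and hence realises the desired ambient isotopy.

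The key technical input is the standard fact that, for any prescribed non-empty open ball $U\subset S^n$, every orientation-preserving self-diffeomorphism of $S^n$ is isotopic to one whose support is contained in $U$. Granting this, and assuming $k<n$ (for $k=n$ the lemma fails in general, since $\pi_0\mathrm{Diff}^+(S^n)$ can be non-trivial), the complement $S^n\setminus \Sigma_2(S^k)$ is open and non-empty and therefore contains such a ball $U$. Applying the key fact to $h^{-1}$ produces a diffeomorphism $g$ isotopic to $h^{-1}$ with $\operatorname{supp}(g)\subset U$. In particular $g$ is the identity on an open neighbourhood of $\Sigma_2(S^k)$: it fixes $\Sigma_2(S^k)$ pointwise and acts as the identity on the normal bundle, so it is a self-equivalence of $(\Sigma_2,f_2)$. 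Post-composing an isotopy $g_t$ from $h^{-1}$ to $g$ with $h$ gives an isotopy from $\id_{S^n}=h^{-1}\circ h$ to $g\circ h$, as required.

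The key fact itself is proved in two stages. First, one isotopes the diffeomorphism to be the identity on a small closed disc around some point $p\in S^n$: use the transitive action of $\mathrm{Diff}^+(S^n)$ on $S^n$ to arrange that $p$ is fixed; use the path-connectedness of $\mathrm{GL}^+(n,\R)$ to arrange that the differential at $p$ is the identity; and finally use an Alexander-type interpolation in a chart around $p$ to isotope the map to coincide with the identity on a small closed disc centred at $p$. The resulting support lies in a closed $n$-disc $D\subset S^n$. Second, by the disc theorem applied to smoothly embedded closed $n$-discs in $S^n$, there is an ambient isotopy carrying $D$ into the prescribed open ball $U$; conjugating the diffeomorphism by this ambient isotopy produces a representative supported in $U$, completing the argument.

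The main technical obstacle is the local linearisation step: one must interpolate between a diffeomorphism having identity differential at $p$ and the identity map in a neighbourhood of $p$, while remaining smoothly a diffeomorphism throughout the isotopy. The remaining ingredients (transitivity of $\mathrm{Diff}^+(S^n)$, path-connectedness of $\mathrm{GL}^+(n,\R)$, and the disc theorem) are standard, and the overall structure of the argument is essentially the assertion that $\pi_0\mathrm{Diff}^+(S^n)$ admits representatives supported in any prescribed ball disjoint from $\Sigma_2(S^k)$.
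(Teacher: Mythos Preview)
Your proof is correct and uses essentially the same core idea as the paper: both arguments rest on the fact that any orientation-preserving self-diffeomorphism of $S^n$ is isotopic to one with support in a prescribed ball (the paper states this as ``$h$ can be isotoped to restrict to the identity on $D^n_N$'' without further justification, while you sketch a proof). Your packaging is arguably more direct---composing $h$ with a localised copy of $h^{-1}$ avoids the paper's rotation trick---and you correctly flag that the argument needs $k<n$, a restriction the paper's proof also silently uses when isotoping the spheres into a hemisphere.
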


\begin{proof}
    Observe that there is only one non-trivial implication, so we suppose that the framed knotted spheres~$(\Sigma_1,f_1)$ and $(\Sigma_2,f_2)$ are equivalent via a diffeomorphism~$h\colon S^n\to S^n$. Think of $S^n$ as the union of two $n$-balls $D^n_{N}$ and $D^n_{S}$ glued along their boundary. Up to ambient isotopy of framed knotted spheres, the images of both $\Sigma_1$ and $\Sigma_2$ can be assumed to be contained in $D^n_{S}$. The diffeomorphism $h$ can be isotoped to restrict to the identity on $D^n_{N}$. Let $r_t\colon S^n\to S^n$, $t\in [0,1]$, be a~$1$-parameter family of rotations such that~$r_0 = \id_{S^n}$ and $r_1$ swaps $D^n_{S}$ and $D^n_{N}$. We now consider the family of framed knotted spheres $(\widetilde{\Sigma}_t, \widetilde{f}_t)$ where $\widetilde{\Sigma}_t= h\circ r_t \circ \Sigma_1$ and the framing $\widetilde{f}_t$ is induced by the framing~$f_1$ via $h\circ r_t$. By construction, these framed knotted spheres are all ambient isotopic. Moreover, $(\widetilde{\Sigma}_0, \widetilde{f}_0)= (\Sigma_2, f_2)$ and~$(\widetilde{\Sigma}_1,\widetilde{f}_1)$ is just $r_1\circ\Sigma_1$ with the framing induced by $r_1$, since~$h$ is the identity on $D^n_{N}$. Clearly, $(\Sigma_1,f_1)$ is ambient isotopic to $(\widetilde{\Sigma}_1, \widetilde{f}_1)$, thus to~$(\Sigma_2,f_2)$. 
\end{proof}

Since there is a unique framing for knots with trivial normal bundle in $M^n$ for $n \geq 4$ (see \Cref{rem:framings}), 
we will often not distinguish between knots and framed knots; in particular, we will sometimes speak about isotopy rather than ambient isotopy of framed knots.

\subsection{Dimensions and codimensions of knotted spheres}\label{subsec:general_dim}

The Schoenflies problem posits that any embedded $(n-1)$-sphere $\Sigma$ in $S^n$ (\ie of codimension $1$) manifests as the equator, \ie there is a diffeomorphism of pairs $(S^n, \Sigma)$ and $(S^n, S^{n-1})$. This holds for any~$n \leq 3$ or~$n \geq 5$, the latter as an application of the $h$-cobordism theorem in these dimensions due to Smale~\cite{smale}; see~\cite{milnor:lectures_h-cob}*{Section~9}.\footnote{The Schoenflies problem for topological, locally flat embeddings was proved by Brown~\cite{brown} in any dimension~$n \geq 1$.}
Isotopy classes of embeddings $S^{k} \hookrightarrow S^n$ of codimension~$n-k=1$ are thus only interesting for $n=4$. We will not address this difficult remaining case of the Schoenflies problem here. 

On the other hand, embeddings of high codimensions $n -k \geq 3$ are well-understood. In particular, isotopy classes of embeddings $S^k\hookrightarrow S^n$ together with the connected sum form a group in codimension at least $3$. For example, any two smooth embeddings~$S^1 \hookrightarrow S^n$ are isotopic; 
see \eg~\cite{hirsch}.
In the piecewise linear and topological categories, Zeeman~\cite{Zeeman:unknotting} and Stallings~\cite{stallings:unknotting} proved that embeddings $S^k \hookrightarrow S^n$ of codimension~$n-k \geq 3$ are unknotted (cf.~\Cref{def:unknotted embedding}). However, in the smooth category, knotting can happen even in codimension~$\geq 3$ as shown by Haefliger~\cite{haefliger:knotted-spheres}. Still, smooth embeddings $S^k \hookrightarrow S^n$ with $n-k \geq 3$ were classified by Haefliger~\cite{haefliger_diff_embeddings} and Levine \cite{levine:class_diff_knots}. Indeed, the former 
\cite{haefliger:plongements} showed that the group of isotopy classes of smooth knotted $k$-spheres in $S^n$ is trivial when~$2n > 3(k+1)$, but non-trivial when~$2n = 3(k+1)$. 

This leaves the case of knotted spheres of codimension $2$, for which no simple classification is known. In particular, the set of isotopy classes of codimension $2$ embeddings does not form a group under connected sum. The rich theory of classical knots~$S^1 \hookrightarrow S^3$ illustrates the complexity of this setting. In this article, we address some of the questions that arise when we broaden our focus to~$n\geq4$.

\section{Constructing knotted spheres with the same trace}\label{sec:construction}

\subsection{The generalised RBG construction}\label{subsec:RBG} 

In this section, we describe a general method to construct pairs of knotted spheres with orientation-preservingly diffeomorphic traces. As mentioned in the introduction, this construction is motivated by similar $3$-dimensional constructions and can be thought of as a generalisation of these. In particular, the RBG link constructions from \cites{piccirillo:shake-genus,piccirillo:conway-knot} are relevant, while our notation is inspired by~\cites{manolescu-piccirillo:RBG,Tagami}. Other constructions for creating knots that share a surgery or trace in dimension $3$ can be found for example in \cites{Lickorish_sharing_surgery,Akbulut,brakes:multiple,Livingston_surgery,Motegi_surgery,Gordon-Luecke,Osoinach_annulus,Teragaito,AJLO2,AJLO,yasui:corks-concordance,MillerPiccirillo,McCoy_char_hyp,Abe_Tagami}.

\begin{definition}\label{def:RBG_mfd}
    We call an $(n+1)$-manifold $W$ an \emph{RBG manifold} if $W$ comes equipped with a handle decomposition of the form
    \begin{align*}
        W \cong D^{n+1}\cup h_k(R) \cup h_{k+1}(B) \cup h_{k+1}(G),
    \end{align*}
    for some $1\leq k\leq n$,
    where $R$ is a framed knotted $(k-1)$-sphere in $\partial D^{n+1}$ and $B,\, G$ are framed knotted $k$-spheres in $\partial (D^{n+1}\cup h_k(R))$ such that there exists
    \begin{enumerate}[label=(\roman*)]
        \item an isotopy of $B(S^k)$ in $\partial (D^{n+1}\cup h_k(R))$ after which it intersects the belt sphere~$\beltR$ of $h_k(R)$ transversely in a single point;
        \item an isotopy of $G(S^k)$ in $\partial (D^{n+1}\cup h_k(R))$ after which it intersects $\beltR$ transversely in a single point.
    \end{enumerate}
\end{definition}

Note that in \Cref{def:RBG_mfd} we do not assume that $B(S^k)$ and $G(S^k)$ \emph{simultaneously} intersect~$\beltR$ transversely in a single point. 

Any $(n+1)$-dimensional RBG manifold yields a pair of (not necessarily non-isotopic) knotted $k$-spheres in $S^n$ that have orientation-preservingly diffeomorphic traces, as follows.

\begin{proposition}\label{thm:rbg_knots}
    To any $(n+1)$-dimensional RBG manifold $W$ as in \Cref{def:RBG_mfd}, we can assign the ambient isotopy     classes of two framed $k$-spheres $\Sigma_B$ and $\Sigma_G$ in $S^n$ whose traces are both orientation-preservingly diffeomorphic to $W$ and thus to each other.
\end{proposition}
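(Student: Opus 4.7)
The plan is to apply the standard handle cancellation lemma twice, once using hypothesis (i) to eliminate the pair $(h_k(R), h_{k+1}(B))$ from the handle decomposition of $W$, and symmetrically once using hypothesis (ii) to eliminate $(h_k(R), h_{k+1}(G))$. Each cancellation will exhibit $W$ as the trace of the remaining framed $k$-sphere.

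First, by hypothesis (i), the attaching sphere of $h_{k+1}(B)$ can be isotoped inside $\partial(D^{n+1}\cup h_k(R))$ to meet the belt sphere $\beltR$ of $h_k(R)$ transversely in a single point. By the classical handle cancellation lemma (see \eg \cite{gompf-stipsicz:book}*{Proposition 4.2.9} or \cite{milnor:lectures_h-cob}*{Section~5}), this is the condition guaranteeing that $(h_k(R), h_{k+1}(B))$ is a cancelling pair, yielding an orientation-preserving diffeomorphism
\[
\Phi_B \colon D^{n+1}\cup h_k(R)\cup h_{k+1}(B) \xrightarrow{\;\cong\;} D^{n+1},
\]
well-defined up to isotopy. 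Restricting to the boundary produces a diffeomorphism $\partial\Phi_B$ from $\partial(D^{n+1}\cup h_k(R)\cup h_{k+1}(B))$ to $S^n$.

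Next, the attaching data for $h_{k+1}(G)$ is a framed knotted $k$-sphere in $\partial(D^{n+1}\cup h_k(R)\cup h_{k+1}(B))$. Pushing it forward via $\partial\Phi_B$ yields a framed knotted $k$-sphere $\Sigma_G$ in $S^n$. By construction,
\[
W \;\cong\; D^{n+1}\cup h_{k+1}(\Sigma_G) \;=\; X(\Sigma_G),
\]
as oriented manifolds. Since $\Phi_B$ is unique up to isotopy, \Cref{lem:equivalence_implies_iso} guarantees that $\Sigma_G$ is well-defined as a framed knotted $k$-sphere in $S^n$ up to ambient isotopy. Swapping the roles of $B$ and $G$ and invoking hypothesis (ii), the symmetric argument gives a well-defined framed $k$-sphere $\Sigma_B$ in $S^n$ with $W\cong X(\Sigma_B)$. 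Composing the two diffeomorphisms produces the orientation-preserving diffeomorphism $X(\Sigma_B)\cong X(\Sigma_G)$.

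The main point of care is the handle cancellation step: we must verify that it applies in this general dimensional and framed setting, and that the resulting attaching data for $h_{k+1}(G)$ is transported to a framed knotted sphere in $S^n$ whose ambient isotopy class does not depend on auxiliary choices made during the cancellation. The former is standard, since the cancellation lemma is insensitive to the ambient dimension and respects framings (the normal data of the surviving handle is carried along by the cancelling diffeomorphism). The latter follows from uniqueness of the cancellation diffeomorphism up to isotopy, combined with \Cref{lem:equivalence_implies_iso}, which identifies equivalence and ambient isotopy for framed knotted spheres in $S^n$.
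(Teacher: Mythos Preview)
Your approach is essentially identical to the paper's: cancel one $(k,k+1)$-handle pair to realise $W$ as the trace of the remaining attaching sphere, then invoke \Cref{lem:equivalence_implies_iso} for well-definedness. The one imprecision is your repeated assertion that the cancellation diffeomorphism $\Phi_B$ is ``unique up to isotopy''. This is not justified and in fact fails for those $n$ with $\MCG(S^n)\neq 1$; if it were true, \Cref{lem:equivalence_implies_iso} would be superfluous. The paper's argument, which is what your citation of that lemma should be supporting, runs as follows: given two cancellation diffeomorphisms $\Phi_B,\Phi_B'$, the composition $\Phi_B\circ(\Phi_B')^{-1}$ restricts to a self-diffeomorphism of $S^n$ carrying $\Sigma_G'$ to $\Sigma_G$ as framed spheres, so the two are \emph{equivalent} in the sense of \Cref{def:equivalence}; then \Cref{lem:equivalence_implies_iso} upgrades equivalence to ambient isotopy. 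Replace the uniqueness claim with this observation and your proof matches the paper's.
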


\begin{proof}
    By hypothesis, after an isotopy, the image of $B$ intersects $\beltR$ transversely in a single point. Thus the handles $h_k(R)$ and $ h_{k+1}(B)$ cancel. The same holds true if we replace $B$ by $G$. We thus obtain diffeomorphisms
    \begin{align}\label{eq:f_G_and_f_B}
        \begin{split}
            f_G &\colon D^{n+1} \cup h_k(R)\cup h_{k+1}(B) \longrightarrow
            D^{n+1}, \\
            f_B &\colon D^{n+1} \cup h_k(R)\cup h_{k+1}(G) \longrightarrow 
            D^{n+1}. 
        \end{split}
    \end{align}
    Now, we define the framed knotted $k$-spheres $\Sigma_B$ and $\Sigma_G$ in $\partial D^{n+1}=S^n$ as the compositions of $B$ and $G$ with the diffeomorphisms $f_B$ and $f_G$,  
    \ie 
    \begin{align*}
        \Sigma_B\defeq f_B\circ B
        \quad \text{and} \quad 
        \Sigma_G\defeq f_G \circ G
    \end{align*}   
    with framings induced by those of $B$ and $G$. The diffeomorphisms $f_B$ and $f_G$, and hence $\Sigma_B$ and $\Sigma_G$, a priori depend on the choice of the isotopies of $B$ and $G$ and the exact cancellation of the handles. However, we now show that, up to ambient isotopy,~$\Sigma_B$ and $\Sigma_G$ are independent of these choices. We describe the argument for~$\Sigma_G$; the same works for $\Sigma_B$. Let 
    \begin{align*}
        f_G, \, f'_G \colon D^{n+1} \cup h_k(R)\cup h_{k+1}(B) \longrightarrow
        D^{n+1}   
    \end{align*}
    be two diffeomorphisms as above yielding knotted spheres $\Sigma_G$ and $\Sigma_G'$. 
    Restricting the composition $f_G \circ (f'_G)^{-1}$ to $\partial D^{n+1}$, we obtain a diffeomorphism $S^n \to S^n$ which gives rise to an equivalence between the (framed) knotted spheres 
    $\Sigma_G' = f_G' \circ G$ and $\Sigma_G = f_G \circ G$ (see~\Cref{def:equivalence}).
    Thus $\Sigma_G$ and~$\Sigma_G'$ are ambient isotopic by \cref{lem:equivalence_implies_iso}.

    Finally, by construction, the traces of $\Sigma_B$ and $\Sigma_G$ are both orientation-preservingly diffeomorphic to $W$. Thus $\Sigma_B$ and $\Sigma_G$ share the same trace.
\end{proof}

\subsection{Trivial cases}\label{subsec:trivial_cases}

In the construction from \Cref{subsec:RBG}, it might very well happen that the framed knotted $k$-spheres~$\Sigma_B$ and $\Sigma_G$ in $S^n$ are actually ambient isotopic. Indeed, this will happen if the RBG manifold~$W$ is too simple or symmetric, or for certain ranges of $k$. For example, $\Sigma_B$ and $\Sigma_G$ are certainly isotopic when they arise as embeddings~$S^k \hookrightarrow S^n$ for $2n > 3(k+1)$~\cite{haefliger:plongements}. See also the discussion in~\Cref{subsec:general_dim}. Another common situation that arises is as follows.

\begin{lemma}\label{lemma:simult_dual}
    For $n \geq 4$, let $W$ be an $(n+1)$-dimensional RBG manifold such that (possibly after an isotopy) the $k$-spheres~$B(S^k)$ and $G(S^k)$ simultaneously intersect the belt sphere $\beltR$ transversely in a single point. Then $\Sigma_B$ and $\Sigma_G$ are ambient isotopic as unoriented submanifolds.
\end{lemma}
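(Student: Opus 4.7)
The plan is to invoke \Cref{lem:equivalence_implies_iso} and reduce the claim to producing an orientation-preserving self-diffeomorphism of $S^n$ that sends $\Sigma_B$ to $\Sigma_G$ (as unoriented submanifolds). The guiding idea is that the simultaneous-intersection hypothesis lets us treat the two $(k+1)$-handles $h_{k+1}(B)$ and $h_{k+1}(G)$ symmetrically in $W$: both cancel $h_k(R)$ via a single transverse intersection with $\beltR$, so a rotation of $\beltR$ can interchange the two local cancellation pictures.

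I would first use the hypothesis to place $B$ and $G$ in a normal form near $\beltR$: after an isotopy in $N = \partial(D^{n+1}\cup h_k(R))$, I may arrange that inside a tubular neighbourhood $T\cong S^{n-k}\times D^k$ of $\beltR$, the portions of $B$ and $G$ are disjoint horizontal $k$-discs $\{p_B\}\times D^k$ and $\{p_G\}\times D^k$ for distinct points $p_B,p_G\in S^{n-k}$. Since $k<n$ the sphere $S^{n-k}$ is connected and $\mathrm{SO}(n-k+1)$ is path-connected, so there is a smooth path $\rho_t$ in $\mathrm{SO}(n-k+1)$ from the identity to a rotation that swaps $p_B\leftrightarrow p_G$. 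Using a cut-off function $\varphi$ with $\varphi(0)=1$ and $\varphi(1)=0$, I set $\tau(q,x)=(\rho_{\varphi(|x|)}(q),x)$ on $T$ and extend by the identity outside of $T$; this yields a self-diffeomorphism of $N$ supported in $T$ that exchanges the intersection points of $B$ and $G$ with $\beltR$. The diffeomorphism $\tau$ then lifts, via the obvious swap of the abstract $(k+1)$-handle cores, to a self-diffeomorphism $\tau_W\colon W\to W$ interchanging $h_{k+1}(B)$ and $h_{k+1}(G)$. Conjugating the cancellation diffeomorphism $f_G$ from \Cref{thm:rbg_knots} by $\tau_W$ produces (up to isotopy) the cancellation diffeomorphism $f_B$, and the induced diffeomorphism on $\partial D^{n+1}=S^n$ sends $\Sigma_B$ to $\Sigma_G$ as unoriented submanifolds.

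The main obstacle I anticipate is that $\tau$ is only a \emph{local} interchange: outside $T$, it fixes $B$ and $G$ pointwise, so it does not literally swap them as submanifolds of $N$. The resolution is the uniqueness statement built into \Cref{thm:rbg_knots}: the cancellation diffeomorphisms $f_B$ and $f_G$ depend, up to isotopy, only on the attaching data near $\beltR$. Hence the local swap inside $T$ already suffices to produce the desired identification on $S^n$. Some care is also needed with orientations, since the rotation $\rho_1$ of $S^{n-k}$ may reverse the orientation of the horizontal discs inside $T$; this is precisely the reason the conclusion only asserts ambient isotopy of $\Sigma_B$ and $\Sigma_G$ as \emph{unoriented} submanifolds. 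Once the equivalence in $S^n$ is in place, \Cref{lem:equivalence_implies_iso} immediately upgrades it to the required ambient isotopy.
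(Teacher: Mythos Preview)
Your approach has a genuine gap at the step where you try to ``lift'' $\tau$ to a self-diffeomorphism $\tau_W$ of $W$ that interchanges $h_{k+1}(B)$ and $h_{k+1}(G)$. As you yourself notice, $\tau$ is supported in $T$ and fixes $B$ and $G$ pointwise outside $T$; in particular $\tau(B)\neq G$ and $\tau(G)\neq B$. There is therefore no diffeomorphism of $D^{n+1}\cup h_k(R)$ induced by $\tau$ that carries the attaching sphere of one $(k+1)$-handle to the attaching sphere of the other, and hence no induced identification of $D^{n+1}\cup h_k(R)\cup h_{k+1}(B)$ with $D^{n+1}\cup h_k(R)\cup h_{k+1}(G)$. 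Your proposed resolution, that the cancellation diffeomorphisms $f_B,f_G$ ``depend only on the attaching data near $\beltR$'', does not save this: the domain of $f_G$ is $D^{n+1}\cup h_k(R)\cup h_{k+1}(B)$ and $\Sigma_G=f_G\circ G$ depends on the global position of $G$ relative to $B$ in that boundary, not merely on where the single intersection point with $\beltR$ sits. Swapping only the intersection points carries no information about the rest of $B$ and $G$, so nothing forces $f_B(B)$ and $f_G(G)$ to agree.

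The paper's argument supplies exactly the missing global relationship between $B$ and $G$. It chooses an arc $\beta\subset\beltR$ joining the two intersection points and uses it to perform a handle slide of a push-off of $G$ over $h_{k+1}(B)$, producing a single sphere $S$ in $\partial(D^{n+1}\cup h_k(R))$ disjoint from both attaching regions. The key point is symmetric: with the same arc $\beta$, the slide of $B$ over $h_{k+1}(G)$ yields the same sphere $S$. Since a push-off of the attaching sphere of an attached $(k+1)$-handle bounds a disc in the new boundary, $S$ is isotopic to $G$ in $\partial(D^{n+1}\cup h_k(R)\cup h_{k+1}(B))$ and isotopic to $B$ in $\partial(D^{n+1}\cup h_k(R)\cup h_{k+1}(G))$. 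Thus $f_G(S)$ is isotopic to $\Sigma_G$ and $f_B(S)$ is isotopic to $\Sigma_B$, and one then concludes via \Cref{lem:equivalence_implies_iso}. The handle slide is what converts the local coincidence at $\beltR$ into the needed global statement; your rotation $\tau$ does not accomplish this.
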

   
\begin{proof}
    If $B(S^k)$ and $G(S^k)$ simultaneously intersect $\beltR$ transversely in a single point, then we can choose an arc $\beta$ connecting this pair of intersection points which lies entirely in $\beltR$. Take a tubular neighbourhood of $\beta$ in $\partial W$ of the form $D^k \times [0,1] \times D^{n-k-1}$, where $[0,1] \times D^{n-k-1} \subset \beltR$. This is possible since $\beltR$ is $(n-k)$-dimensional. Consider the ``tube'' $A=\partial D^k \times [0,1] \times \{0\}$. Take a push-off of the attaching sphere $G(S^k)$ and perform a handle slide over $h_{k+1}(B)$ guided by $A$. This yields a $k$-sphere $S$ which we consider as an unoriented submanifold. The image of $S$ under the diffeomorphism~$f_G$ which cancels $h_{k+1}(B)$ and~$h_k(R)$ (see \eqref{eq:f_G_and_f_B}) is isotopic, as an unoriented submanifold of $S^n$, to~$\Sigma_G$.
    
    We now reverse the roles of~$B$ and $G$ and apply the same argument. The push-off of $B$ that we take matches the one which implicitly appeared in our earlier handle slide; similarly, the handle slide of this sphere over $h_{k+1}(G)$ implicitly uses the same push-off of $G$ that we started with. Since we also use the same arc $\beta$ for the handle slide, the resulting $k$-sphere is again $S$. We deduce as before that the image of $S$ under $f_B$ is isotopic, as unoriented submanifold of $S^n$, to~$\Sigma_B$.

    In summary, the diffeomorphism $f_G\circ f_B^{-1}$, restricted to $\partial D^{n+1}$, sends $f_B(S)$ to~$f_G(S)$ and thus can be used to define an equivalence of unoriented submanifolds between~$\Sigma_B$ and $\Sigma_G$, which are isotopic to $f_B(S)$ to~$f_G(S)$, respectively. We can promote it to an ambient isotopy using \Cref{lem:equivalence_implies_iso}.
\end{proof}

However, there is no reason to expect that $\Sigma_B$ and $\Sigma_G$ are isotopic in general.

\subsection{Constructing RBG manifolds with \texorpdfstring{$\boldsymbol{k=n-2}$}{k=n-2}.}
\label{subsec:nDRBG}

The most interesting case in the construction of framed knotted $k$-spheres in $S^n$ from $(n+1)$-dimensional RBG manifolds~$W$ is that of codimension~$2$, \ie when $k=n-2$; see the discussion in~\Cref{subsec:general_dim,subsec:trivial_cases}.
In this case, $W$ has a handle decomposition of the form
\begin{align}\label{eq:RBG_k=n-2}
    W = D^{n+1}\cup h_{n-2}(R) \cup h_{n-1}(B) \cup h_{n-1}(G).
\end{align}
The challenge here is to construct a sufficiently complicated RBG manifold $W$ such that the associated framed knotted $k$-spheres, in this case (framed) knots $K_B\defeq \Sigma_B$ and $K_G\defeq \Sigma_G$ in $S^n$, are non-isotopic. At the same time, $W$ should be simple enough to allow us to effectively compute invariants that distinguish~$K_B$ and~$K_G$. In the following, we present a method to construct interesting RBG manifolds as in \eqref{eq:RBG_k=n-2}, where we expect~$K_B$ and $K_G$ to usually be non-isotopic in $S^n$ (see also \Cref{rem:sim_geom_dual}). In \Cref{subsec:RBG_constr_explicit}, we will use this general method to explicitly construct RBG manifolds with non-isotopic associated knots. Throughout this section, let~$n \geq 4$.

\subsubsection{Construction of \texorpdfstring{$R$}{R}}\label{General R construction}

Let $R$ be the embedding induced by the standard inclusions $S^{n-3}\hookrightarrow \R^{n-2}\hookrightarrow \R^n \hookrightarrow S^{n}$. We choose the framing for $R$ for which we obtain $D^{n+1} \cup h_{n-2}(R) \cong D^3 \times S^{n-2}$ with boundary $\partial (D^{n+1}\cup h_{n-2}(R)) \cong S^2 \times S^{n-2}$. Fix a point $z \in S^{n-2}$ and write~$\mathcal{B}_R = S^2 \times \{z\} \subseteq S^2 \times S^{n-2}$ for the belt sphere of~$h_{n-2}(R)$. 

Next, we will construct knots $B$ and $G$ in $S^2\times S^{n-2}$, so that after an isotopy, the image of each of $B$ and $G$ is \emph{geometrically dual} to $\mathcal{B}_R$, \ie after isotopies, $B(S^{n-2})$ and~$\mathcal{B}_R$, or~$G(S^{n-2})$ and~$\mathcal{B}_R$, respectively, intersect transversely in a single point. We will first define~$B$ and $G$ as embedded submanifolds of $\partial(D^{n+1} \cup h_{n-2}(R)) \cong S^2 \times S^{n-2}$, and then choose an embedding representing these submanifolds (see also \Cref{REM:embeddings}). Since there is a unique choice of framing for the spheres involved (see \Cref{rem:framings}), we will usually just omit the framing from the discussion. 

For $\ell \geq 1$, choose $2\ell+2$ distinct points $w, x_1, \dots, x_{2\ell+1}$ in $S^2$ and consider the $(n-2)$-spheres $\{w\} \times S^{n-2}$ and $\{x_i\} \times S^{n-2}$, for $i \in \{1, \dots, 2\ell+1\}$, in~$S^2 \times S^{n-2}$.  
We will describe how to construct $B$ and $G$ by modifying these spheres.

\subsubsection{Construction of the knot \texorpdfstring{$B$}{B} in \texorpdfstring{$S^2 \times S^{n-2}$}{S^2 times S^{n-2}}}\label{General B construction}

To construct~$B$, we will add a local knot to $\{w\} \times S^{n-2}$; see \Cref{fig:general_B_construction} for a schematic. More precisely, let $K$ be a knot in~$S^n$. The sphere $B$ is obtained as the pairwise connected sum
\begin{align*}
    (S^2\times S^{n-2},B) \defeq (S^2\times S^{n-2}, \{w\}\times S^{n-2}) \# (S^n, K).
\end{align*}
Note that we perform the connected sum away from the spheres $\{x_i\} \times S^{n-2}$, $i \in \{1, \dots, 2\ell+1\}$, and away from $\beltR = S^2 \times \{z\}$.

\begin{figure}[htbp] 
	\centering
    \def\svgwidth{0,85\columnwidth}
\begingroup%
  \makeatletter%
  \providecommand\color[2][]{%
    \errmessage{(Inkscape) Color is used for the text in Inkscape, but the package 'color.sty' is not loaded}%
    \renewcommand\color[2][]{}%
  }%
  \providecommand\transparent[1]{%
    \errmessage{(Inkscape) Transparency is used (non-zero) for the text in Inkscape, but the package 'transparent.sty' is not loaded}%
    \renewcommand\transparent[1]{}%
  }%
  \providecommand\rotatebox[2]{#2}%
  \newcommand*\fsize{\dimexpr\f@size pt\relax}%
  \newcommand*\lineheight[1]{\fontsize{\fsize}{#1\fsize}\selectfont}%
  \ifx\svgwidth\undefined%
    \setlength{\unitlength}{408.12003068bp}%
    \ifx\svgscale\undefined%
      \relax%
    \else%
      \setlength{\unitlength}{\unitlength * \real{\svgscale}}%
    \fi%
  \else%
    \setlength{\unitlength}{\svgwidth}%
  \fi%
  \global\let\svgwidth\undefined%
  \global\let\svgscale\undefined%
  \makeatother%
  \begin{picture}(1,0.34213785)%
    \lineheight{1}%
    \setlength\tabcolsep{0pt}%
    \put(0,0){\includegraphics[width=\unitlength,page=1]{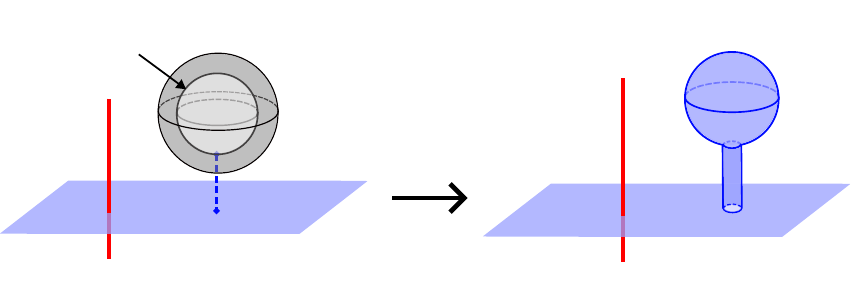}}%
    \put(0.15625911,0.29914313){\color[rgb]{0,0,0}\makebox(0,0)[lt]{\lineheight{1.25}\smash{\begin{tabular}[t]{l}$K\subseteq S^n$\end{tabular}}}}%
    \put(0.27547344,0.01686611){\color[rgb]{0.4,0.43137255,1}\transparent{0.98039198}\makebox(0,0)[lt]{\lineheight{1.25}\smash{\begin{tabular}[t]{l}$\{w\}\times S^{n-2}$\end{tabular}}}}%
    \put(0.89301733,0.01349658){\color[rgb]{0.4,0.43137255,1}\transparent{0.98039198}\makebox(0,0)[lt]{\lineheight{1.25}\smash{\begin{tabular}[t]{l}$B$\end{tabular}}}}%
    \put(0.05861951,0.15511369){\color[rgb]{1,0,0}\makebox(0,0)[lt]{\lineheight{1.25}\smash{\begin{tabular}[t]{l}$\mathcal{B}_R$\end{tabular}}}}%
    \put(0.66326943,0.1586476){\color[rgb]{1,0,0}\makebox(0,0)[lt]{\lineheight{1.25}\smash{\begin{tabular}[t]{l}$\mathcal{B}_R$\end{tabular}}}}%
  \end{picture}%
\endgroup%

	\caption{Construction of the knot $B$ in $S^2 \times S^{n-2}$ as connected sum of a local knot $K$ and $\{w\} \times S^{n-2}$. The belt sphere $\beltR = S^2 \times \{z\}$ is in red.
    }
	\label{fig:general_B_construction}
\end{figure}

\subsubsection{Construction of the knot \texorpdfstring{$G$}{G} in \texorpdfstring{$S^2 \times S^{n-2}$}{S^2 times S^{n-2}}}\label{General G construction}

To construct $G$, we begin by tubing together the spheres $\{x_i\} \times S^{n-2}$, $i \in \{1, \dots, 2\ell+1\}$, along standard arcs. More precisely, choose orientations of these spheres such that all $\{x_i\} \times S^{n-2}$ for odd~$i \in \{1, \dots, 2\ell+1\}$ have the same orientation and such that the orientation of each~$\{x_i\} \times S^{n-2}$ for even $i \in \{1, \dots, 2\ell+1\}$ differs from that of $\{x_1\} \times S^{n-2}$. 

Now, we take a collection of $2\ell$ arcs $\beta_i\colon D^1 \hookrightarrow S^2 \times S^{n-2}$, $i \in \{1, \dots, 2\ell\}$, such that, for each $i$, the two endpoints of $\beta_i$ lie on $\{x_i\} \times S^{n-2}$ and $\{x_{i+1}\} \times S^{n-2}$, respectively. These arcs can be chosen ``straight'' from $\{x_i\} \times S^{n-2}$ to $\{x_{i+1}\} \times S^{n-2}$, disjoint from each other and disjoint from $B$ and $\mathcal{B}_R$. For example, we can choose $2\ell$ arcs in~$S^2$ that connect the points $x_i$ and $x_{i+1}$ in $S^2$ for $i \in \{1, \dots, 2\ell\}$, and use disjoint push-offs of these to define the arcs $\beta_i$ in $S^2 \times S^{n-2}$ (see Figure \ref{fig:general_G_construction}). Tubing the spheres $\{x_i\} \times S^{n-2}$, $i \in \{1, \dots, 2\ell+1\}$, along the arcs $\beta_i$, $i \in \{1, \dots, 2\ell\}$, as explained in \Cref{subsubsec:tubing}, we obtain an $(n-2)$-sphere $\widetilde{G}$ which is disjoint from $B$ and intersects~$\mathcal{B}_R=S^2 \times \{z\}$ transversely in $2\ell+1$ points, but algebraically only once. 

\begin{figure}[htbp] 
	\centering
	\def\svgwidth{0,85\columnwidth}
	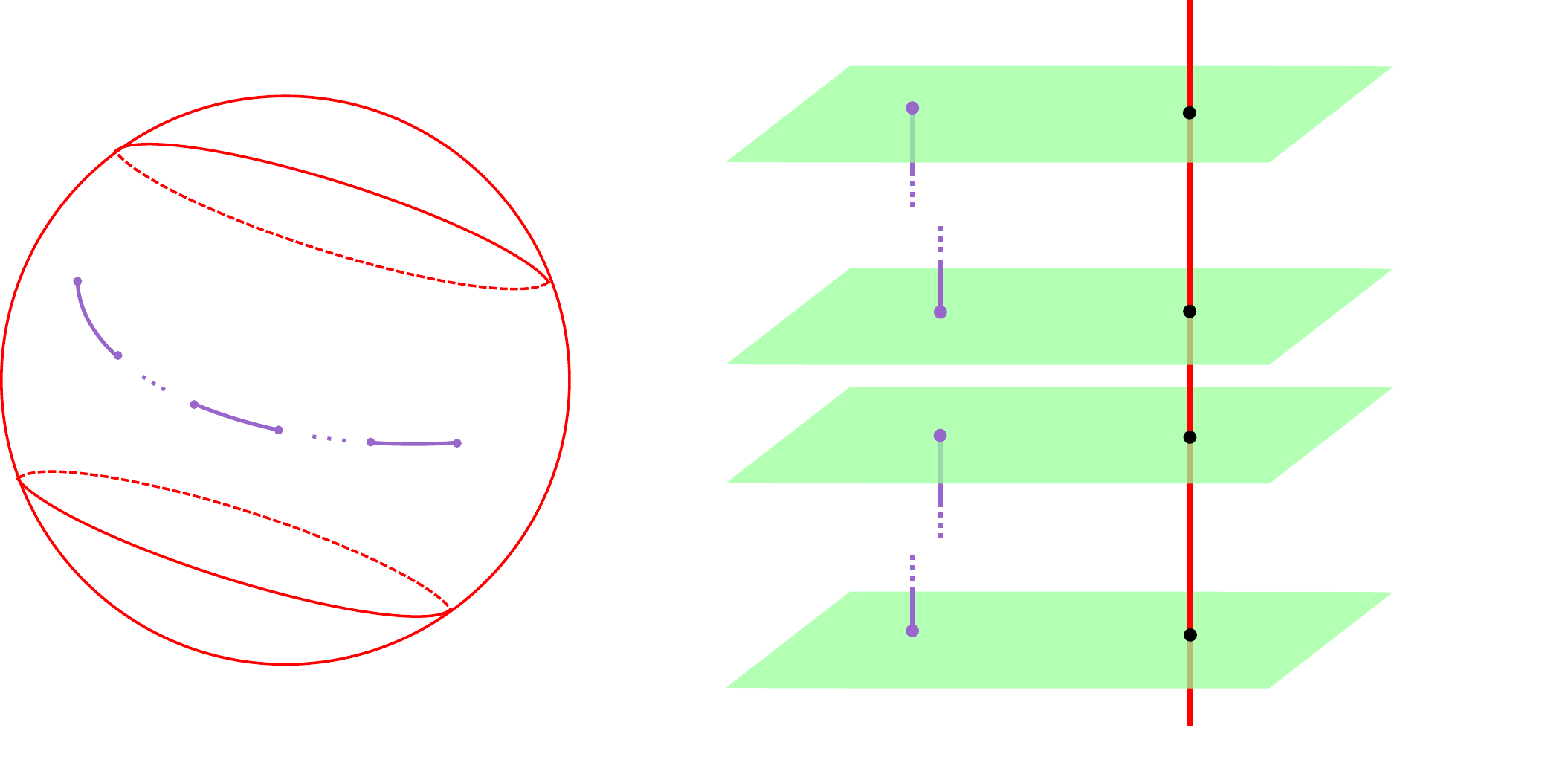
	\caption{Choice of arcs in $S^2 \times \{z\}$ that give rise to ``straight'' arcs~$\beta_1, \dots, \beta_{2\ell}$ in $S^2 \times S^{n-2}$. The knot $\widetilde{G}$ in $S^2 \times S^{n-2}$ is obtained by tubing $\bigcup \{x_i\} \times S^{n-2}$ along the arcs $\beta_i$.
    }
	\label{fig:general_G_construction}
\end{figure}

Next, we will modify the collection of arcs $\{\beta_i\}$ in $S^2 \times S^{n-2}$ into a new collection of arcs $\{\beta'_i\colon D^1 \hookrightarrow S^2 \times S^{n-2}\}$ by linking them with $B$. Explicitly, choose $2\ell$ disjoint embeddings $\gamma_i\colon S^1 \hookrightarrow S^n\setminus K$, $i \in \{1, \dots, 2\ell\}$, where $K$ is the local knot used to construct~$B$ (see Construction~\ref{General B construction}). The embeddings~$\gamma_i$, $i \in \{1, \dots, 2\ell\}$, are all contained in~$S^n$ and as such can be assumed to be disjoint from $B$ in $S^2\times S^{n-2} = S^2\times S^{n-2}\# S^n$, and also disjoint from $\{x_i\}\times S^{n-2}$, $i \in \{1, \dots, 2\ell +1\}$. For each $i \in \{1, \dots, 2\ell\}$, modify the arc $\beta_i$ by performing a band sum with $\gamma_i$ along any arc $\delta_i$ disjoint from the $\{x_i\} \times S^{n-2}$ and $B$ to obtain an arc $\beta'_i\defeq \beta_i\#_{\delta_i} \gamma_i$ (see \Cref{F: Arcmodification}). We define $G$ to be the sphere obtained by tubing the spheres~$\{x_i\}\times S^{n-2}$ along the arcs~$\beta'_i$. Note that $G$ a priori depends on the choices of the loops $\gamma_i$ and the arcs~$\delta_i$. However, the next lemma shows that, up to isotopy in $S^2\times S^{n-2}$ (\ie ignoring~$B$), it does not.

\begin{figure}[htbp] 
	\centering
	\includegraphics[width=1\textwidth, trim=3cm 0cm 0cm 0cm, clip]{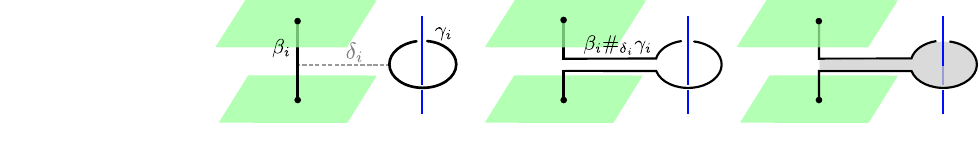}
	\caption{Band sum $\beta_i\#_{\delta_i} \gamma_i$ of the arc $\beta_i$ and $\gamma_i \colon S^1 \hookrightarrow S^n \setminus K$ along an arc $\delta_i$. On the right, the region giving an isotopy between $\beta_i\#_{\delta_i} \gamma_i$ and $\beta_i$ in $S^2 \times S^{n-2}$ is shaded.}
	\label{F: Arcmodification}
\end{figure}

\begin{lemma}\label{lem:GequalsGtilda}
    The spheres $G$ and $\widetilde{G}$ are both isotopic to $\{x_1\} \times S^{n-2}$ in $S^2 \times S^{n-2}$. In particular, up to isotopy in $S^2 \times S^{n-2}$, the sphere $G$ is independent of the choices of the loops~$\gamma_i$ and arcs $\delta_i$.
\end{lemma}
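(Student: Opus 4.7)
The plan is to prove the lemma in two steps: first, that $\widetilde{G}$ is ambient isotopic to $\{x_1\}\times S^{n-2}$ in $S^2\times S^{n-2}$; second, that $G$ is ambient isotopic to $\widetilde{G}$.

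For the first step, I would argue by induction on $\ell$, with the case $\ell=0$ being trivial. The key geometric input for the inductive step is that, since $\{x_{2\ell}\}\times S^{n-2}$ and $\{x_{2\ell+1}\}\times S^{n-2}$ carry opposite orientations and $\beta_{2\ell}$ is a push-off of the straight arc $\alpha_{2\ell}\subset S^2$ from $x_{2\ell}$ to $x_{2\ell+1}$, the tubed sphere $(\{x_{2\ell}\}\times S^{n-2})\#_{\beta_{2\ell}}(\{x_{2\ell+1}\}\times S^{n-2})$ bounds an embedded $(n-1)$-ball in $S^2\times S^{n-2}$; explicitly, this ball is $\alpha_{2\ell}\times(S^{n-2}\setminus N(z_0))$, where $N(z_0)$ is a small disk around the basepoint $z_0\in S^{n-2}$ used to define $\beta_{2\ell}$. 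Using that tubing disjoint oriented submanifolds along disjoint arcs is independent of the order, I can factor $\widetilde{G}$ as $\widetilde{G}_{2\ell-1}\#_{\beta_{2\ell-1}}S$, where $\widetilde{G}_{2\ell-1}$ is the tubing of the first $2\ell-1$ spheres along $\beta_1,\ldots,\beta_{2\ell-2}$ and $S$ is the bounded sphere just described. A standard isotopy shrinking $S$ within its bounding ball and then absorbing the resulting trivial connected summand yields that $\widetilde{G}$ is ambient isotopic to $\widetilde{G}_{2\ell-1}$, which by the inductive hypothesis is ambient isotopic to $\{x_1\}\times S^{n-2}$.

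For the second step, I would use that, by the discussion of tubing in \Cref{subsubsec:tubing}, the oriented submanifold obtained from tubing depends only on the ambient isotopy classes rel endpoints of the tubing arcs. Since $n\geq 4$, the manifold $S^2\times S^{n-2}$ is simply connected and of dimension at least~$4$, so two arcs sharing the same endpoints are isotopic rel endpoints if and only if they are homotopic rel endpoints. As $\beta_i'=\beta_i\#_{\delta_i}\gamma_i$ differs from $\beta_i$ by a band sum with the loop $\gamma_i$, which is null-homotopic in the simply-connected ambient manifold, the arcs $\beta_i'$ and $\beta_i$ are homotopic and therefore isotopic rel endpoints. Hence $G$ and $\widetilde{G}$ are ambient isotopic.

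Combining both steps shows that $G$ is ambient isotopic to $\{x_1\}\times S^{n-2}$. The last sentence of the lemma then follows immediately, since the isotopy class of $\{x_1\}\times S^{n-2}$ is independent of the auxiliary choices of $\gamma_i$ and $\delta_i$. The main obstacle is carefully formalising the isotopy in the first step that absorbs the bounded sphere $S$ into $\widetilde{G}_{2\ell-1}$. While this is a standard consequence of the fact that tubing with an unknotted sphere bounding a ball disjoint from the rest of the configuration does not change the ambient isotopy class, some care is needed to handle the relative positions of $\beta_{2\ell-1}$ and the bounding ball of $S$ cleanly.
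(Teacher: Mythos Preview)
Your overall strategy matches the paper's: an induction reducing $\widetilde{G}$ to $\{x_1\}\times S^{n-2}$, followed by an arc-isotopy argument to pass from $G$ to $\widetilde{G}$. For the first step, the paper constructs essentially the same bounding $(n-1)$-ball (phrased as the complement of a neighbourhood of $\beta_{2\ell}$ inside $\alpha_{2\ell}\times S^{n-2}$) and isotopes across it, so your argument and the paper's coincide.

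For the second step the approaches diverge slightly. The paper explicitly extends each $\gamma_i$ to an embedded disc in $S^n$ (by general position when $n\geq 5$, and by the Norman trick when $n=4$) and uses these discs, together with $\delta_i$, to build the isotopy from $\beta_i'$ to $\beta_i$ \emph{in the complement of the spheres} $\{x_j\}\times S^{n-2}$. You instead invoke simple connectivity of $S^2\times S^{n-2}$ to conclude $\beta_i'\simeq\beta_i$ rel endpoints. This is cleaner, but as written there is a small gap: an isotopy of a tubing arc that passes through one of the spheres $\{x_j\}\times S^{n-2}$ does not induce an isotopy of the tubed spheres, since the intermediate tubings are singular. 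What you actually need is that $\beta_i'$ and $\beta_i$ are isotopic rel endpoints in the complement $(S^2\setminus\{x_1,\dots,x_{2\ell+1}\})\times S^{n-2}$. Fortunately this complement is still simply connected for $n\geq 4$ (both factors are), so your argument goes through once you make this adjustment; but the appeal to \Cref{subsubsec:tubing} alone does not cover it.
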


\begin{proof}
First, we show that $\widetilde{G}$ is isotopic to $\{x_1\} \times S^{n-2}$ in $S^2 \times S^{n-2}$. To that end, consider $\beta_{2\ell} \times S^{n-2}$. Observe that its boundary $(\partial\beta_{2\ell}) \times S^{n-2}$ is given by the disjoint union of $\{x_{2\ell}\}\times S^{n-2}$ and $\{x_{2\ell+1}\}\times S^{n-2}$, and that the complement of an open tubular neighbourhood of $\beta_{2\ell}$ in~$\beta_{2\ell}\times S^{n-2}$ is diffeomorphic to an $(n-1)$-disc~$D$ after smoothing corners. We can isotope $\widetilde{G}$ over~$D$ to the sphere obtained by tubing the $(2\ell-1)$ spheres $\{x_{1}\}\times S^{n-2}, \dots, \{x_{2\ell-1}\}\times S^{n-2}$ along the arcs $\beta_1,\ldots,\beta_{2\ell-2}$. See \Cref{Fig:Gtilde_isotopy} for a schematic. By induction we get an isotopy from~$\widetilde{G}$ to $\{x_1\}\times S^{n-2}$. 

Now, note that the embeddings $\gamma_i$, $i \in \{1, \dots, 2\ell\}$, extend to disjoint embeddings $(D^2,S^1)\hookrightarrow (S^n,S^n\setminus K)$ by general position when $n\geq 5$ and by using a small $2$-sphere linking the boundary $S^1$ to tube away self intersections when $n=4$, for details see~\cite{Normantrick}.\footnote{In fact, these extensions are unique up to isotopy rel $S^1$, which again uses general position for~$n\geq 5$ and \cite{LBT} for $n=4$.} We can use these discs bounded by the $\gamma_i$ and the arc $\delta_i$ to define, for each $i \in \{1, \dots, 2\ell\}$, an isotopy rel boundary in the complement of the spheres $\{x_{1}\}\times S^{n-2}, \dots, \{x_{2\ell+1}\}\times S^{n-2}$ in~$S^2 \times S^{n-2}$ between the arcs $\beta_i$ and $\beta_i\#_{\delta_i} \gamma_i$ (see \Cref{F: Arcmodification}). Since isotopies of the arcs rel boundary induce an isotopy between the tubed surfaces, the sphere $G$ is isotopic to~$\widetilde{G}$ in~$S^2\times S^{n-2}$.
\end{proof}

\begin{figure}[htbp] 
	\centering
	\def\svgwidth{0,95\columnwidth}
	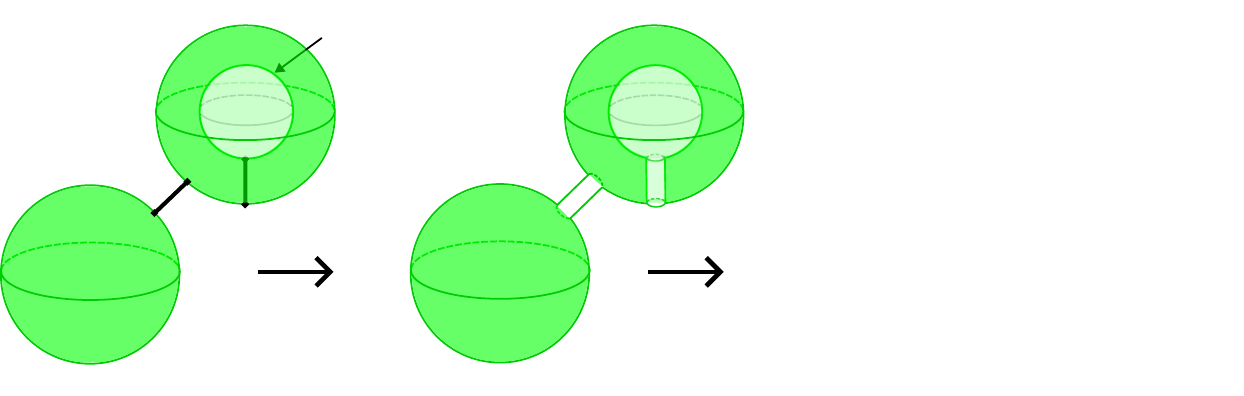
	\caption{A schematic for an isotopy between $\widetilde{G}$ and $\{x_{2\ell-1}\}\times S^{n-2}$.}\label{Fig:Gtilde_isotopy}
\end{figure}

Note that, in general, $G$ is not isotopic to $\widetilde{G}$ in the complement of $B$. See \cref{rem:sim_geom_dual} for further discussion.

We have so far defined $B$ and $G$ as embedded submanifolds of $S^2 \times S^{n-2}$, which is diffeomorphic to $\partial (D^{n+1} \cup h_{n-2}(R))$. To define the corresponding RBG manifold, we must choose a parametrisation and framing for each of $B$ and $G$, so that we can attach $(n-1)$-handles to $D^{n+1} \cup h_{n-2}(R)$ along them. Both $B$ and $G$ have closed tubular neighbourhoods in $S^2 \times S^{n-2}$ that are diffeomorphic to $S^{n-2} \times D^2$, and we can use any such diffeomorphism for each of $B$ and $G$ to fix a parametrisation and framing. From now on, by slight abuse of notation, $B$ and $G$ will denote the corresponding framed knots in~$\partial D^{n+1} \cup h_{n-2}(R)$.

\subsubsection{The construction gives an RBG manifold}

\begin{proposition}\label{prop:RBG-mfd_from_constr}
The framed knots $B$ and $G$ together with the framed $R$ from Constructions~\ref{General R construction}--\ref{General G construction} always produce an RBG manifold $W$ as in \eqref{eq:RBG_k=n-2}.
\end{proposition}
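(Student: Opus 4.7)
The plan is to verify that the triple $(R,B,G)$ produced by Constructions~\ref{General R construction}--\ref{General G construction}, together with the chosen parametrisations and framings, satisfies the two conditions of \Cref{def:RBG_mfd} with $k=n-2$. The handle decomposition $W \cong D^{n+1} \cup h_{n-2}(R) \cup h_{n-1}(B) \cup h_{n-1}(G)$ itself is then well-defined: $R$ has a specified framing by Construction~\ref{General R construction}, and once $B$ and $G$ are parametrised via a choice of tubular neighbourhood (as explained at the end of Construction~\ref{General G construction}), the framings of these codimension-two knots are unique up to isotopy by \Cref{rem:framings}, since $n\geq 4$. Thus only the intersection conditions with $\mathcal{B}_R$ need to be checked.

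For condition (i), I would observe that $B$ was defined in Construction~\ref{General B construction} as the pairwise connected sum $(S^2\times S^{n-2}, \{w\}\times S^{n-2})\#(S^n,K)$, performed in a small ball that is explicitly chosen to be disjoint from $\mathcal{B}_R=S^2\times\{z\}$. Consequently, $B(S^{n-2})$ meets $\mathcal{B}_R$ exactly as $\{w\}\times S^{n-2}$ does, namely transversely in the single point $(w,z)$. No ambient isotopy is even required.

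For condition (ii), the key input is \Cref{lem:GequalsGtilda}, which states that $G$ is isotopic in $S^2\times S^{n-2}$ to $\{x_1\}\times S^{n-2}$. Under this isotopy, $G$ is carried onto a sphere that intersects $\mathcal{B}_R=S^2\times\{z\}$ transversely in the single point $(x_1,z)$. This immediately yields condition (ii).

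There is no real obstacle here: the content of the statement has already been packaged into \Cref{lem:GequalsGtilda}, and Construction~\ref{General B construction} places the connected-sum region away from $\mathcal{B}_R$ precisely so that $B$ automatically satisfies condition (i). I would finish by remarking that, as explicitly noted just after \Cref{def:RBG_mfd}, the two isotopies witnessing (i) and (ii) need not occur simultaneously (indeed, the isotopy of $G$ from \Cref{lem:GequalsGtilda} ignores $B$), so no further compatibility has to be verified.
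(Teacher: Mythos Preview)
Your proposal is correct and follows essentially the same approach as the paper's own proof: both verify condition~(i) by noting that the connected-sum region defining $B$ is placed away from $\mathcal{B}_R$, so $B$ is already geometrically dual to $\mathcal{B}_R$, and both invoke \Cref{lem:GequalsGtilda} to show that $G$ is isotopic to $\{x_1\}\times S^{n-2}$ and hence satisfies condition~(ii). Your additional remarks on framings and on the non-simultaneity of the two isotopies are accurate and simply make explicit points the paper leaves implicit.
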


\begin{proof}
We need to show that, for any $B$ and~$G$ resulting from the above constructions, their images in $\partial (D^{n+1} \cup h_{n-2}(R))$ are geometrically dual to the belt sphere $\mathcal{B}_R$ up to isotopy. For any local knot~$K$, the sphere $B(S^{n-2})$ is by construction geometrically dual to~$\beltR$. Moreover, by \cref{lem:GequalsGtilda}, the embedded submanifold $G(S^{n-2})$ is isotopic to~$\{x_1\} \times S^{n-2}$, which is geometrically dual to $\mathcal{B}_R$. 
\end{proof}

\begin{remark}\label{rem:sim_geom_dual}
    In certain situations, $B(S^{n-2})$ and $G(S^{n-2})$ will not be simultaneously geometrically dual to $\mathcal{B}_R$. For example, this can happen if the embeddings $\gamma_1, \dots, \gamma_{2\ell}$ in the above construction of $G$ are chosen to represent distinct elements of $\pi_1(S^n\setminus K)$. We will make use of a particular instance of this situation in the next section. 

    Note that for the specific RBG manifolds constructed in \Cref{subsec:RBG_constr_explicit}, $G$ will not be isotopic to $\widetilde{G}$ in the complement of $B$. Thus $G$, when considered as a knot in the complement of $B$, does indeed depend on the choices of $\gamma_i$ and~$\delta_i$. So the isotopy class of $G$ in $S^2\times S^{n-2}$ is independent of the choices made, while the isotopy class of the $2$-component link $B\cup G$ in $S^2\times S^{n-2}$ depends on these choices.
\end{remark}

\subsection{An explicit infinite family of RBG manifolds}\label{subsec:RBG_constr_explicit} 

In this section, 
using the construction from \Cref{subsec:nDRBG}, for every~$n \geq 4$ and $m \geq 1$, we will build a pair of knots $B_m$ and $G_m$ in $\partial (D^{n+1} \cup h_{n-2}(R))$ that give rise to an RBG manifold $W$ as in \eqref{eq:RBG_k=n-2}. By \Cref{thm:rbg_knots}, we will obtain knots~$K_{B_{m}}$ and $K_{G_{m}}$ in~$S^n$ with orientation-preservingly diffeomorphic traces.  In \cref{sec:distinguishing}, we will show that $K_{B_m}$ and $K_{G_m}$ are indeed non-isotopic. 

\subsubsection{Construction of \texorpdfstring{$R$}{R}}

Fix $n \geq 4$ and $m \geq 1$. As in Construction~\ref{General R construction}, let us denote by $R$ an embedding $S^{n-3} \hookrightarrow S^n$ such that $D^{n+1}\cup h_{n-2}(R) \cong D^3 \times S^{n-2}$, and by~$\mathcal{B}_R = S^2 \times \{z\} \subseteq S^2 \times S^{n-2}$ the belt sphere of~$h_{n-2}(R)$ for some~$z \in S^{n-2}$. We set~$\ell = 1$, so $2\ell+1 =3$. In the notation from \Cref{subsec:nDRBG}, we will construct~$B_m$ and~$G_m$ from the $(n-2)$-spheres $\{w\} \times S^{n-2}$ and $ \{x_i\} \times S^{n-2} \subseteq S^2 \times S^{n-2}$, $i \in \{1, 2, 3\}$, for four distinct points $w, x_1, x_2, x_3\in S^2$. The construction of $B_m$ is based on the choice of a local knot~$K_m$ in~$S^n$.

\subsubsection{Construction of \texorpdfstring{$K_m$}{K_m} and \texorpdfstring{$B_m$}{B_m}}\label{K_m_construction}
We will build $K_m$ by doubling an $(n-2)$-dimensional embedded ribbon disc $D^b_m$ in $D^n$ (see \Cref{sec:ribbon}). First, we will explain the construction of the immersed ribbon disc $D^b_m$ in $S^{n-1}$. The reader is invited to consult the left-hand side of \Cref{fig:Akbulut_trick} throughout, which illustrates the situation for~$n=4$. For now, ignore the green and red parts of this figure and just view the blue part as a subset of $S^{n-1}=S^3$. We will explain the whole figure in \Cref{subsec:Kirby_diagram}.

\begin{figure}[htbp] 
	\centering
	\centering
    \includegraphics[width=1\textwidth]{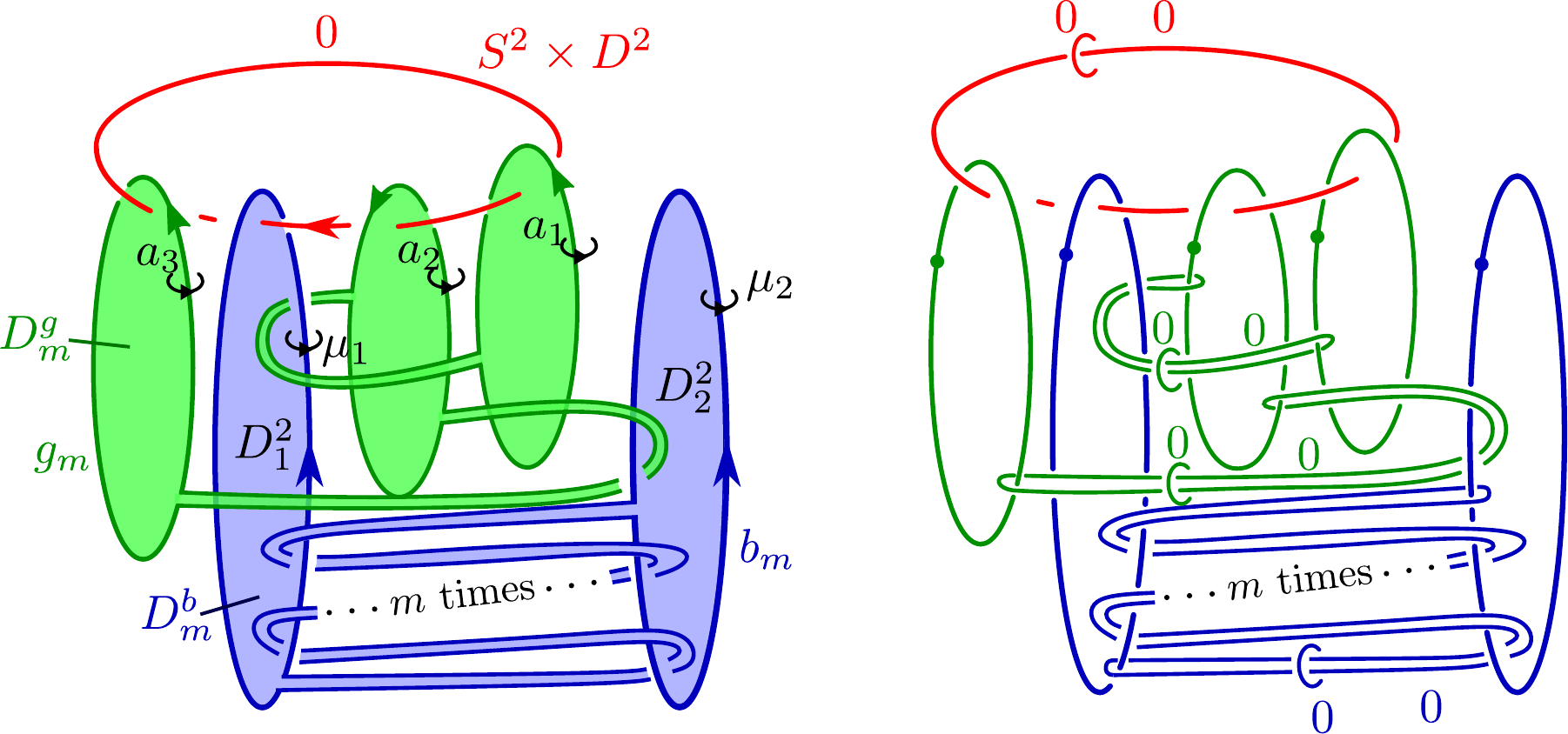}
	\caption{
    On the left: Discs $D^g_m$ bounded by $g_m$ (in green) and~$D^b_m$ bounded by $b_m$ (in blue) in $S^2\times D^2$ for $n=4$. On the right: Kirby diagram for $S^2\times S^2\setminus (B_m\cup G_m)$.}
    \label{fig:Akbulut_trick}
\end{figure}

Choose disjoint, oriented, smoothly embedded $(n-2)$-balls $D^{n-2}_{i}$ in $S^{n-1}$ for $i\in \{1,2\}$, and denote the (oriented) meridians of $\partial D^{n-2}_{i} \cong S^{n-3}$ by~$\mu_{i}$. To construct~$D^b_m$, choose an arc~$\alpha_b$ whose interior lies in the complement of $\partial D^{n-2}_1 \cup \partial D^{n-2}_2$ as follows. If~$m=1$, $\alpha_b$ starts at~$\partial D^{n-2}_{1}$, wraps once around~$\partial D^{n-2}_{2}$ following $\mu_{2}$, then wraps once around~$\partial D^{n-2}_{1}$ following~$\mu_{1}$, and finally ends at $\partial D^{n-2}_{2}$. For general $m$, the arc~$\alpha_b$ wraps in total $m$ times first around~$\partial D^{n-2}_{2}$ and then around $\partial D^{n-2}_{1}$ starting at~$\partial D^{n-2}_{1}$ and ending at $\partial D^{n-2}_{2}$. For example, for $m=3$, the arc follows $\mu_{2},\mu_{1},\mu_{2},\mu_{1},\mu_{2},\mu_{1}$ in this order. Note that the arc $\alpha_b$ is uniquely determined up to isotopy rel boundary, and it is chosen such that it transversely intersects $D^{n-2}_{1}$ and~$D^{n-2}_{2}$ exactly~$m$ times each.

Now, tube $\partial D^{n-2}_{1}$ and $\partial D^{n-2}_{2}$ along $\alpha_b$ to obtain a sphere $b_m$ (see \Cref{subsubsec:tubing}). Here, for~$n >4$, in the tubing procedure there is a unique choice of thickening when $\alpha_b$ is thickened to~$h_{\alpha_b} \colon D^1 \times D^{n-3} \hookrightarrow S^{n-1}$ and~$b_m$ is well-defined up to ambient isotopy. For~$n=4$, \ie when $\alpha_b$ and $h_{\alpha_b} \colon D^1 \times D^1 \hookrightarrow S^3$ lie in $S^3$, we thicken by using the blackboard framing, as shown in \Cref{fig:Akbulut_trick} (left) in blue. The two balls $D^{n-2}_{1}$ and $D^{n-2}_{2}$, together with $h_{\alpha_b}(D^1 \times D^{n-3})$ attached to them along $h_{\alpha_b}(S^0 \times D^{n-3})$, give rise to the desired immersed disc $D^b_m$ in~$S^{n-1}$, with~$2m$ ribbon singularities and boundary~$b_m$. The ribbon intersections disappear when the interior of $D^b_m$ is appropriately pushed into~$D^n$. We obtain an embedded disc in~$D^n$ with boundary~$b_m$ in $S^{n-1}$, which, by slight abuse of notation, we will also denote by~$D^b_m$. 

By doubling the pair $(D^n, D^b_m)$, we obtain the ribbon $(n-2)$-knot $K_m$ in $S^n$ (see \Cref{sec:ribbon}).\footnote{For $n=4$ and $m=1$, we observe that $b_m =b_1 = \partial D^b_1$ is the knot~$3_1 \# -3_1$, and the double~$K_1$ of $D^b_1$ in $S^4$ is the spun trefoil, see \eg \cite{suciu:inf-many-ribbon-knots}*{Figure~2}.} Finally, by Construction \ref{General B construction} we define 
\begin{align*}
    (S^2\times S^{n-2},B_m) \defeq (S^2\times S^{n-2}, \{w\}\times S^{n-2}) \# (S^n, K_m).
\end{align*}

\subsubsection{Construction of \texorpdfstring{$G_m$}{G_m}}\label{G_m_construction}
As in Construction \ref{General G construction}, we start with~$\widetilde{G}_m$, which is obtained by tubing the spheres $\{x_i\} \times S^{n-2}$, $i \in \{1, 2,3\}$, along ``straight'' arcs $\beta_1$ and~$\beta_2$. We now have to choose two disjoint embeddings $\gamma_i\colon S^1 \hookrightarrow S^n\setminus K_m$, $i \in \{1, 2\}$. By slightly abusing notation, we denote by $\mu_1$ and $\mu_2$ the images of the meridians $\mu_1$ and $\mu_2$ of $\partial D^{n-2}_{1}$ and $\partial D^{n-2}_{2}$ in $S^n\setminus K_m$. Then we choose $\gamma_1$ and $\gamma_2$ such that $\gamma_1(S^1)$ and $\gamma_2(S^1)$ represent $\mu_1^{-1}$ and $\mu_2^{-1}$ in $\pi_1(S^n \setminus K_m)$ up to conjugacy, respectively (see also \Cref{subsec:meridians}). We define~$G_m$ to be the sphere obtained by tubing the spheres~$\bigcup_{i=1}^3\{x_i\}\times S^{n-2}$ along the arcs~$\beta'_i=\beta_i\#_{\delta_i} \gamma_i$, $i \in \{1, 2\}$.

This finishes the construction of~$B_m$ and~$G_m$. 

\subsubsection{The construction gives an RBG manifold}
By \Cref{prop:RBG-mfd_from_constr}, we obtain an RBG manifold $W_m = D^{n+1}\cup h_{n-2}(R) \cup h_{n-1}(B_m) \cup h_{n-1}(G_m)$. Therefore, by \Cref{thm:rbg_knots}, we can summarise the work of this subsection as follows.

\begin{proposition}\label{prop:summ_constr}
    Let $n \geq 4$. For every $m \geq 1$, the pair of framed knots $K_{B_{m}}$ and~$K_{G_m}$ in $S^n$ have orientation-preservingly diffeomorphic traces.
\end{proposition}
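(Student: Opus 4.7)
The statement is essentially an assembly result: all the substantive work has been absorbed into the construction and into the two propositions (\Cref{thm:rbg_knots} and \Cref{prop:RBG-mfd_from_constr}) that were proved earlier. So my plan is simply to verify that Constructions~\ref{K_m_construction} and~\ref{G_m_construction} fit into the framework of \Cref{subsec:nDRBG} and then invoke those two results in sequence.

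First, I would observe that Construction~\ref{K_m_construction} is a direct instance of Construction~\ref{General B construction}: $B_m$ is defined as the pairwise connected sum of $(S^2\times S^{n-2},\{w\}\times S^{n-2})$ with the local knot $(S^n,K_m)$ (the double of the embedded ribbon disc $D^b_m$), performed away from both the spheres $\{x_i\}\times S^{n-2}$ and the belt sphere $\mathcal{B}_R$. Similarly, Construction~\ref{G_m_construction} is an instance of Construction~\ref{General G construction} with $\ell=1$, $2\ell+1=3$, and with the specific choices $\gamma_1$, $\gamma_2$ representing $\mu_1^{-1}$, $\mu_2^{-1}\in\pi_1(S^n\setminus K_m)$. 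Choosing framings for $B_m$ and $G_m$ via trivialisations of their normal bundles in $S^2\times S^{n-2}$ (as noted at the end of \Cref{General G construction}), we obtain framed knots in $\partial(D^{n+1}\cup h_{n-2}(R))$.

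Next, I would apply \Cref{prop:RBG-mfd_from_constr} to the triple $(R,B_m,G_m)$. That proposition guarantees that the resulting $(n+1)$-manifold
\[
W_m \;=\; D^{n+1}\cup h_{n-2}(R) \cup h_{n-1}(B_m) \cup h_{n-1}(G_m)
\]
is an RBG manifold in the sense of \Cref{def:RBG_mfd}: $B_m(S^{n-2})$ intersects $\mathcal{B}_R$ transversely in a single point directly by construction, and $G_m(S^{n-2})$ does so after the isotopy to $\{x_1\}\times S^{n-2}$ supplied by \Cref{lem:GequalsGtilda}.

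Finally, I would apply \Cref{thm:rbg_knots} to $W_m$. This associates to $W_m$ the framed knots $K_{B_m}=\Sigma_{B_m}$ and $K_{G_m}=\Sigma_{G_m}$ in $S^n$, obtained by pushing $B_m$ and $G_m$ into $\partial D^{n+1}=S^n$ via the handle-cancellation diffeomorphisms $f_{B_m}$ and $f_{G_m}$ of \eqref{eq:f_G_and_f_B}, and asserts that their traces are both orientation-preservingly diffeomorphic to $W_m$, hence to each other. Since there is no genuine obstacle here beyond checking that the hypotheses of the two propositions hold, the proof is complete; the real content of the subsection lies in carrying out Constructions~\ref{K_m_construction} and~\ref{G_m_construction} carefully enough that one can later show $K_{B_m}$ and $K_{G_m}$ are non-isotopic, which is deferred to \Cref{sec:distinguishing}.
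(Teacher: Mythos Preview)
Your proposal is correct and follows exactly the paper's approach: the paper's proof consists of the single sentence preceding the proposition, which invokes \Cref{prop:RBG-mfd_from_constr} to certify that $W_m$ is an RBG manifold and then \Cref{thm:rbg_knots} to conclude that the associated knots have orientation-preservingly diffeomorphic traces. Your write-up simply spells out these two steps in more detail.
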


We will use these pairs in \Cref{sec:distinguishing} to prove \Cref{theorem:non-isotopic} as a direct application of \Cref{theorem:non-isotopic_compl}.

\subsection{Kirby diagrams in dimension 4}\label{subsec:Kirby_diagram}

In dimension $n=4$, we can describe~$B_m$ and~$G_m$ explicitly in Kirby diagrams as follows. Consider the Kirby diagram of~$S^2\times D^2$ given by the $0$-framed red unknot $r$ shown in \Cref{fig:Akbulut_trick} on the left. The blue knot~$b_m$ and the green knot $g_m$ in $\partial(S^2\times D^2)$ bound the shaded ribbon discs. Here, the blue disc is the ribbon disc constructed in Construction \ref{K_m_construction}. The three green discs in \Cref{fig:Akbulut_trick} will produce by doubling the three green spheres $\{x_i\} \times S^{2} \subseteq S^2 \times S^{2}$, $i \in \{1, 2, 3\}$, from Construction~\ref{G_m_construction}. We push the interiors of the blue and green discs into the interior of~$S^2\times D^2$, which gives disjointly and properly embedded $2$-discs in $S^2\times D^2$, which by slight abuse of notation we denote by~$D^b_m$ and $D^g_m$. Forming a relative double of $(S^2\times D^2, D^b_m \cup D^g_m)$, we obtain two disjointly embedded $2$-spheres in~$S^2\times S^2$, which are the $2$-spheres~$B_m$ and~$G_m$ constructed above. In this perspective on our RBG manifold, the belt sphere of the red $2$-handle in \Cref{fig:Akbulut_trick} is the $2$-sphere $S^2 \times \{z\}$ in~$S^2\times S^2$ which is given by gluing the core of this $2$-handle to a pushed-in Seifert disc of the red unknot $r$.

On the right of \Cref{fig:Akbulut_trick} we present a Kirby diagram of $(S^2\times S^2)\setminus (B_m\cup G_m)$ which is obtained as follows. First, we double $S^2\times D^2$, which corresponds to adding a $0$-framed meridian to the red knot $r$, see for example~\cite{gompf-stipsicz:book}*{Example~4.6.3}. By construction, the green ribbon disc $D^g_m$ admits a handle decomposition with three $0$-handles and two $1$-handles. Thus, its double, \ie the green sphere $G_m$, admits a handle decomposition with three $0$-handles, four $1$-handles (two of which are duals of the $1$-handles of~$D^g_m$), and three $2$-handles. Analogously, we get a handle decomposition for the blue sphere~$B_m$ consisting of two $0$-handles, two $1$-handles, and two $2$-handles. Now, there is a standard argument for constructing a Kirby diagram of the complement of an embedded surface, see~\cite{gompf-stipsicz:book}*{Section~6.2}. For every $2$-dimensional $0$-handle of~$B_m\cup G_m$ we obtain a $4$-dimensional $1$-handle of its complement. Likewise, for each $2$-dimensional $1$-handle we obtain a $4$-dimensional $2$-handle. The complement of the ribbon discs is constructed exactly as described in~\cite{gompf-stipsicz:book}*{p.~212}. In particular, the attaching regions of the $4$-dimensional $2$-handles that correspond to dual $1$-handles in the handle decomposition of the ribbon spheres~$B_m$ and $G_m$ are given by $0$-framed meridians of the ribbon bands. Note that, as usual, by~\cite{laudenbachpoenaru} the $3$-handles of~$(S^2\times S^2)\setminus (B_m\cup G_m)$ corresponding to the $2$-handles of~$B_m\cup G_m$ can be omitted in our Kirby diagram (see also~\cite{gompf-stipsicz:book}*{Section~4.4}).

\begin{remark}\label{rem:doubling}
    Note that we could also construct $B_m$ and $G_m$ for general $n \geq 4$ by a relative double construction as described above for $n=4$. For later use, we point out that this can be done by crossing the construction described in dimension $4$ with~$D^{n-4}$. More precisely, to get the spheres $B_m$ and $G_m$ in $S^2\times S^{n-2}$, we can take the relative double of $((S^2\times D^2)\times D^{n-4}, (D^b_m \cup D^g_m) \times D^{n-4})$. This description makes it evident that the latter pair deformation retracts onto~$(S^2\times D^2, D^b_m\cup D^g_m)$, and this retraction preserves the meridians of the boundaries of the ribbon discs. Note that one could depict the handle decomposition for $S^2\times S^{n-2} \setminus (B_m \cup G_m) $ for every~$n \geq 4$ in a diagram similar to Figure 8 in \cite{kim}.
\end{remark}

\subsection{General remarks} 

Let us end this section with two remarks on our construction.

\begin{remark}\label{rem:surgeries}
    A similar construction as in \Cref{def:RBG_mfd} and \Cref{thm:rbg_knots}, but more general, also works to construct knotted spheres with diffeomorphic surgeries, but a priori unrelated traces.\footnote{In \Cref{sec:traces_Gluck}, and in particular in \Cref{Glucktwist}, we will study the relation between surgeries and traces of knotted $(n-2)$-spheres in dimensions $n \geq 4$.} Let $R$ be a framed $s$-sphere in $S^n$ and let $B$ and $G$ be framed $k$-spheres in the manifold obtained by surgery on~$S^n$ along $R$ such that 
    \begin{enumerate}[label=(\roman*)]
        \item surgery along $B$ yields a manifold diffeomorphic to $S^n$ via a diffeomorphism~$f_G$;
        \item surgery along $G$ yields a manifold diffeomorphic to $S^n$ via a diffeomorphism~$f_B$.
    \end{enumerate}
    Then the framed $k$-spheres $\Sigma_B=f_B(B)$ and $\Sigma_G=f_G(G)$ in $S^n$ have orientation-preservingly diffeomorphic surgeries. This construction is more in line with the approach of~\cite{manolescu-piccirillo:RBG}.
\end{remark}

\begin{remark}
    In the original construction for~$n=3$, an RBG link consists of a $3$-component link $R \cup B \cup G$ in $S^3$. This translates to an RBG $4$-manifold~$W$ with~$k=1$ by viewing the link as a Kirby diagram in which the unknotted component $R$ is a dotted $1$-handle and the components $B$ and $G$ represent $0$-framed $2$-handles. There is an analogous construction in the higher-dimensional setting given by taking a~$2$-component link $B\cup G$ of $(n-2)$-spheres in $S^n$ together with a circle $r$ linking both components once. When $n\geq 4$, the circle always bounds a disc. We require that both~$B$ and $G$ are individually isotopic in the complement of $r$ to spheres that intersect the disc bounded by $r$ geometrically once. Then $r\cup B\cup G$ can be understood as a \emph{generalised RBG link} in $S^n$. The corresponding RBG manifold is obtained by pushing the disc bounded by~$r$ into~$D^{n+1}$, removing a neighbourhood of it and then attaching $(n-1)$-handles along $B$ and $G$ as before. 
\end{remark}

\section{Distinguishing knots with the same trace}
\label{sec:distinguishing}

In this section, we first provide a brief overview of relevant knot invariants. Then, in light of the examples constructed in \Cref{sec:construction}, we define a new invariant in terms of the conjugacy classes of the meridians of knots. We will use this new invariant to finish the proof of \Cref{theorem:non-isotopic} in \Cref{subsec:proof_thm_non-iso}.

\subsection{Knot invariants}\label{subsec:knot-invariants}

Let us first consider the exterior $E_K =S^n \setminus \overset{\circ}{\nu} K$ of a knot~$K$ in $S^n$ for $n \geq 3$. By Alexander duality, it has the same homology as $S^1$. In particular, the homology groups of $E_K$ are independent of the knot type $K$. However, useful homological invariants can be extracted from the \emph{Alexander invariants} of $K$. 

We briefly recall the definition of the most important of these invariants, the Alexander polynomial. Given a knot $K \colon S^{n-2} \hookrightarrow S^n$, the kernel of the Hurewicz homomorphism $\pi\defeq\pi_1(E_K) \twoheadrightarrow H_1(E_K) \cong \Z$ determines the \emph{infinite cyclic cover} $\widetilde{E_K} \to E_K$, whose fundamental group is the commutator subgroup~$\pi^\prime \defeq [\pi,\pi]$. The deck transformation group of this cover is $\pi/\pi^\prime \cong H_1(E_K) \cong \Z$, which induces an action of the group ring $\Lambda \defeq \Z[t,t^{-1}] \cong \Z[H_1(E_K)]$ on the homology groups $H_\ast(\widetilde{E_K})$. The \emph{Alexander module} of $K$ is defined as the (left) $\Lambda$-module~$H_1(\widetilde{E_K})$. The order of this module is the \emph{Alexander polynomial} $\Delta_K(t) \in \Lambda$ of $K$, which is well-defined up to multiplication by units in $\Lambda$ (see \eg \cite{Rolfsen}*{Chapter~7} or \cite{friedl2024surveyfoundationsfourmanifoldtheory}*{Section~15.1}). Note that, as groups, $H_1(\widetilde{E_K}) \cong \pi^\prime/[\pi^\prime,\pi^\prime]$. A presentation matrix for $H_1(\widetilde{E_K})$, and thus $\Delta_K(t)$, can be computed from a presentation of $\pi$ using the free differential calculus due to Fox \cites{fox:free_diff_calc,fox_crowell}. In \Cref{subsec:Fox_calculus}, we will distinguish our infinitely many pairs of knots $(K_{B_m}, K_{G_m})_{m \geq 1}$ from each other using the Alexander polynomial (see \Cref{prop:Alex_polys}). 

The following proposition shows that, for $n \geq 4$, knots $K_1, K_2$ in $M^n$ with homotopy equivalent surgeries~$M(K_1) \simeq M(K_2)$, have isomorphic knot groups~$\pi_1(K_1) \cong \pi_1(K_2)$. In particular, if two knots in~$S^n$ have orientation-preservingly diffeomorphic surgeries or orientation-preservingly diffeomorphic traces, then they cannot be distinguished by their Alexander polynomials or modules. 
\begin{proposition}
\label{proposition:fundamental_group}
    For $n \geq 4$, let $K$ be a knot with trivial normal bundle in an $n$-manifold $M$, possibly with $\partial M \neq \emptyset$.
    Then the inclusion $E_{K}\hookrightarrow M(K)$ induces an isomorphism on the fundamental groups, which identifies the conjugacy class of the meridian $\mu_K$ with that of the dual curve of $K$ in~$M(K)$ for some orientation. In particular, if two such knots have homotopy equivalent surgeries, then their knot groups are isomorphic.
\end{proposition}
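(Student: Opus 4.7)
The plan is to apply the Seifert--van Kampen theorem to the standard decomposition of the surgery manifold and exploit the fact that $n\geq 4$ forces the fundamental groups of the two relevant pieces, as well as their intersection, to be infinite cyclic.

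First I would write
\[
M(K) = E_K \cup_{\partial E_K} (D^{n-1} \times S^1),
\]
where the identification of $\partial E_K \cong S^{n-2} \times S^1$ with $\partial(D^{n-1} \times S^1) = S^{n-2} \times S^1$ is induced by the trivialisation of $\nu K$ coming from the framing. Up to standard open-thickening issues, this expresses $M(K)$ as a union of two open sets with intersection homotopy equivalent to $S^{n-2} \times S^1$.

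Next I would compute the three fundamental groups. Since $n \geq 4$, we have $n-2 \geq 2$ and $n-1 \geq 3$, so
\[
\pi_1(D^{n-1} \times S^1) \cong \mathbb{Z}, \qquad \pi_1(S^{n-2} \times S^1) \cong \mathbb{Z},
\]
with both generated by the $S^1$-factor. The inclusion $\partial E_K \hookrightarrow D^{n-1} \times S^1$ sends the generator $\{\mathrm{pt}\} \times S^1$ of the source to $\{0\} \times S^1$ (the dual curve) up to sign; in particular it is an isomorphism on $\pi_1$. On the $E_K$-side, the generator of $\pi_1(\partial E_K)$ maps to (a representative of) the conjugacy class of the meridian $\mu_K$ in $\pi_1(E_K)$.

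Applying Seifert--van Kampen then gives
\[
\pi_1(M(K)) \cong \pi_1(E_K) *_{\mathbb{Z}} \mathbb{Z},
\]
where the second amalgamating map is an isomorphism. A standard fact about amalgamated products (the right-hand factor contributes nothing once the amalgamating map onto it is an isomorphism) yields that the inclusion $E_K \hookrightarrow M(K)$ induces an isomorphism $\pi_1(E_K) \xrightarrow{\cong} \pi_1(M(K))$. Tracing through the identification shows that the dual curve $\gamma_K = \{0\}\times S^1$ is homotopic in $M(K)$ to a push-off of $\{\mathrm{pt}\}\times S^1 \subseteq \partial E_K$, which represents (up to sign) the meridian $\mu_K$; this proves the claim about conjugacy classes. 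The final sentence is immediate: a homotopy equivalence $M(K_1) \simeq M(K_2)$ induces $\pi_1(M(K_1)) \cong \pi_1(M(K_2))$, hence $\pi_1(K_1) \cong \pi_1(K_2)$.

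The only mild obstacle is bookkeeping for basepoints and orientations (which is why the statement only identifies meridian and dual curve \emph{for some orientation}); and noting explicitly where the hypothesis $n \geq 4$ enters, namely in making $\pi_1(\partial E_K) = \pi_1(S^{n-2}\times S^1)$ abelian and infinite cyclic so that the amalgamating map is an isomorphism. No deep input beyond van Kampen is required.
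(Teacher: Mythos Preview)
Your proposal is correct and follows essentially the same approach as the paper: apply Seifert--van Kampen to the decomposition $M(K) = E_K \cup_{S^{n-2}\times S^1} D^{n-1}\times S^1$, and use $n\geq 4$ to ensure the inclusion $S^{n-2}\times S^1 \hookrightarrow D^{n-1}\times S^1$ induces an isomorphism on $\pi_1$, so that the amalgamated product collapses to $\pi_1(E_K)$. The paper's proof is simply a terser version of what you wrote.
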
 

\begin{proof}
This follows from the Seifert--van Kampen theorem applied to the decomposition $M(K) = E_K \cup_{S^{n-2}\times S^1} D^{n-1} \times S^1$, noting that the inclusion $S^{n-2}\times S^1 \hookrightarrow D^{n-1}\times S^1$ induces an isomorphism on the level of $\pi_1$ when~$n\geq 4$. Via this inclusion, the dual curve $\gamma_K = \{0\} \times S^{1}$ of $K$ is identified (for some orientation) with the element of $\pi_1(K)$ represented by $\mu_K$, which is well-defined up to conjugacy (see also \Cref{subsec:meridians}).
\end{proof}

\begin{remark}
    Although the fundamental group $\pi_1 \defeq \pi_1(K_i)\cong  \pi_1(S^n\setminus K_i)$, $i \in \{1,2\}$, of the complement cannot distinguish between knots $K_1$ and $K_2$ with a shared surgery, the second homotopy groups~$\pi_2$  of their complements could differ. We could compare these by considering each $\pi_2$ as a (left) $\mathbb{Z}[\pi_1]$-module. This method has been successfully employed to distinguish knots with isomorphic knot groups, \eg in~\cites{gordon,suciu:inf-many-ribbon-knots}. However, sometimes this is not enough. Plotnick--Suciu~\cite{plotnick_suciu} gave examples of $2$-knot complements with identical first and second homotopy groups (the latter as $\Z[\pi_1]$-module), whose homotopy types they distinguished using $k$-invariants. Many questions concerning the homotopy type of the complement of $2$-knots remain open; see, for example, \cite{lomonaco}*{Section XII}. Note that, in contrast, classical $1$-knots have aspherical complements~\cite{Papakyriakopoulos:dehnslemma_asphericity}. 

    For an overview on other knot invariants that could potentially distinguish knots with the same surgery, we refer to the excellent survey article by Conway~\cite{conway2022invariants2knots}. Many of these invariants are typically only computed for fibred knots, such as twist-spun knots, where the knot exterior has a simple structure. In our setting, our knot exteriors do not have a simple geometric structure, so these approaches are difficult to apply. Instead, we use a more elementary invariant: the conjugacy class of the meridians in the fundamental group. We will explain this next.
\end{remark}

\subsection{Counting representations}\label{subsec:counting_reps}

For each $m \geq 1$, the knots~$K_{B_m}$ and $K_{G_m}$ constructed in \Cref{prop:summ_constr} have orientation-preservingly diffeomorphic traces and thus surgeries. \Cref{proposition:fundamental_group} tells us that their knot groups are isomorphic. As discussed above, it is therefore not possible to distinguish between~$K_{B_m}$ and $K_{G_m}$ using abelian knot invariants derived from the knot groups for a fixed $m \geq 1$. In \Cref{subsec:explicit_reps}, we will compute a presentation of $\pi_1(K_{B_m})\cong \pi_1(K_{G_{m}})$ in which two distinct generators represent the conjugacy classes of the meridians of~$K_{B_m}$ and~$K_{G_m}$, respectively. This observation will be crucial in distinguishing these knots. Indeed, we will look at representations of the knot groups into a specific finite group~$A$ which map the elements representing the conjugacy classes of the meridians of $K_{B_m}$ and~$K_{G_m}$ to specific elements of~$A$. The number of such representations will differ for~$K_{B_m}$ and~$K_{G_m}$. 

Let us now explain this in detail. First, we introduce some notation. 

\begin{definition}\label{def:inv_reps}
For finitely presented groups $H$ and $A$, and elements $h \in H$, $\sigma \in A$, we denote by $\operatorname{Rep}(H,A,h,\sigma)\defeq \{ \varphi \in \hm(H,A) \mid \varphi(h) = \sigma \}$ the set of homomorphisms from $H$ to $A$ that map $h$ to $\sigma$. Moreover, $\# \operatorname{Rep}(H,A,h,\sigma)$ will denote the number of such homomorphisms, which is possibly infinite. 
\end{definition}
Note that $\# \operatorname{Rep}(H,A,h,\sigma)$ is invariant under conjugation of $h \in H$ and $\sigma \in A$, \ie
\begin{align*}
	\# \operatorname{Rep}(H,A,h,\sigma) = \# \operatorname{Rep}(H,A,ghg^{-1},\tau\sigma\tau^{-1}) \qquad \text{for any } g \in H, \tau \in A. 
\end{align*}

\begin{definition}\label{def:rep_knot_inv}
    Let $K$ be a knot in $S^n$ with meridian $\mu_K$. 
    For a finitely presented group $A$ and an element~$\sigma \in A$, we define
    \begin{align*}
        \mathcal{N}_K(A,\sigma) \defeq \# \operatorname{Rep}(\pi_1(K),A,\mu_K,\sigma).
    \end{align*}
\end{definition}

Recall the following definition.

\begin{definition}\label{def:homo_equiv_rel_bound}
Let $X, Y$ be $n$-manifolds with possibly non-empty boundary. We say that $X$ and $Y$ are \emph{homotopy equivalent rel boundary} if the pairs $(X,\partial X)$ and~$(Y,\partial Y)$ are homotopy equivalent, \ie if there is a homotopy equivalence $f\colon X\to Y$ restricting to a homotopy equivalence $f\vert_{\partial X}\colon \partial X\to \partial Y$.
\end{definition}

The following proposition shows that $\mathcal{N}_K(A,\sigma)$ defines a knot invariant. 

\begin{proposition}\label{prop:counting_invariant}
    For $n \geq 4$, suppose that two knots $K_1$ and $K_2$ in $S^n$ have homotopy equivalent exteriors rel boundary (which happens \eg if $K_1$ and $K_2$ are ambient isotopic). Let $A$ be a finitely presented group and let $\sigma \in A$ be an element which is conjugate to its inverse~$\sigma^{-1}$ in~$A$. Then $\mathcal{N}_{K_1}(A,\sigma) = \mathcal{N}_{K_2}(A,\sigma)$.
\end{proposition}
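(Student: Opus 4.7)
The strategy is to exploit the fact that a homotopy equivalence of pairs $f\colon (E_{K_1},\partial E_{K_1})\to (E_{K_2},\partial E_{K_2})$ induces an isomorphism $f_*\colon \pi_1(K_1)\to \pi_1(K_2)$ that must send the conjugacy class of $\mu_{K_1}$ to that of $\mu_{K_2}^{\pm 1}$, and then to use conjugation invariance of $\#\operatorname{Rep}$ (observed just after \Cref{def:inv_reps}) together with the hypothesis $\sigma\sim \sigma^{-1}$ to absorb the sign ambiguity.

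First I would note that for $n\geq 4$ we have $\partial E_{K_i}\cong S^{n-2}\times S^1$, so $\pi_1(\partial E_{K_i})\cong \Z$ is generated by the meridian $\mu_{K_i}$. The restriction $f|_{\partial}$ is a homotopy equivalence, hence induces an automorphism $\Z\to \Z$, which is multiplication by~$\pm 1$. After choosing basepoints and connecting paths so that meridians are represented in $\pi_1(\partial E_{K_i})$ and are then pushed forward to $\pi_1(K_i)$, this implies that $f_*(\mu_{K_1})$ and $\mu_{K_2}^{\pm 1}$ represent the same conjugacy class in $\pi_1(K_2)$.

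Next, pre-composition with $f_*^{-1}$ yields a bijection
\[
\operatorname{Rep}(\pi_1(K_1),A,\mu_{K_1},\sigma)\;\xrightarrow{\;\cong\;}\;\operatorname{Rep}(\pi_1(K_2),A,f_*(\mu_{K_1}),\sigma),
\]
so that $\mathcal{N}_{K_1}(A,\sigma)=\#\operatorname{Rep}(\pi_1(K_2),A,f_*(\mu_{K_1}),\sigma)$. Using the conjugation invariance of $\#\operatorname{Rep}$ in the meridian slot and the identification above, the right-hand side equals $\#\operatorname{Rep}(\pi_1(K_2),A,\mu_{K_2}^{\pm 1},\sigma)$, which in turn equals $\#\operatorname{Rep}(\pi_1(K_2),A,\mu_{K_2},\sigma^{\pm 1})$ by inverting the homomorphism condition. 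In the $+$ case this is already $\mathcal{N}_{K_2}(A,\sigma)$; in the $-$ case the hypothesis that $\sigma$ and $\sigma^{-1}$ are conjugate in $A$, combined with conjugation invariance of $\#\operatorname{Rep}$ in the target element, gives $\#\operatorname{Rep}(\pi_1(K_2),A,\mu_{K_2},\sigma^{-1})=\mathcal{N}_{K_2}(A,\sigma)$, completing the argument.

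The only real subtlety is the sign ambiguity on the boundary: a homotopy equivalence of pairs does not a priori preserve an orientation of the $S^1$-factor of $\partial E_{K_i}$, and this is precisely what forces both the hypothesis on $\sigma$ and the use of conjugation invariance. The parenthetical remark (that ambient isotopic knots have exteriors homotopy equivalent rel boundary) is immediate, since an ambient isotopy restricts to a diffeomorphism of exteriors.
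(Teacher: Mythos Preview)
Your proof is correct and follows essentially the same approach as the paper: both use that the homotopy equivalence of pairs induces an isomorphism on $\pi_1$ carrying the meridian's conjugacy class to that of $\mu_{K_2}^{\pm 1}$ (via the induced automorphism of $\pi_1(\partial E_{K_i})\cong\Z$), then split into the two sign cases and invoke conjugation invariance together with the hypothesis $\sigma\sim\sigma^{-1}$.
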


\begin{proof}
    Suppose that $(E_{K_1}, \partial \nu K_1)$ and~$(E_{K_2}, \partial \nu K_2)$ are homotopy equivalent via a map $h \colon E_{K_1} \to E_{K_2}$. In particular, $h$ induces a homotopy equivalence between the boundaries $\partial \nu K_1 \simeq \partial \nu K_2$. Since the fundamental groups of $\partial \nu K_1 \simeq \partial \nu K_2$ are infinite cyclic generated by the meridians, the induced isomorphism $h_\ast \colon \pi_1 (K_{1}) \to \pi_1 (K_{2})$ maps the conjugacy class of $\mu_{K_1}$ either to the conjugacy class of $\mu_{K_2}$ or to the conjugacy class of $\mu_{K_{2}}^{-1}$. In the first case, $h_\ast$ induces a bijection between
    \begin{align*}
        \operatorname{Rep}(\pi_1(K_1),A,\mu_{K_1},\sigma) \qquad \text{and} \qquad \operatorname{Rep}(\pi_1(K_2),A,\mu_{K_2},\sigma).
    \end{align*}
    In the second case, there is a bijection between $\operatorname{Rep}(\pi_1(K_1),A,\mu_{K_1},\sigma)$ and $$\operatorname{Rep}(\pi_1(K_2),A,\mu_{K_2}^{-1},\sigma)=\operatorname{Rep}(\pi_1(K_2),A,\mu_{K_2},\sigma^{-1}).$$ The latter set is in bijection with $\operatorname{Rep}(\pi_1(K_2),A,\mu_{K_2},\sigma)$, because~$\sigma $ and its inverse are conjugate in $A$. In both cases, we obtain $\mathcal{N}_{K_1}(A,\sigma) = \mathcal{N}_{K_2}(A,\sigma)$. 
\end{proof}

\begin{remark}\label{rem:counting_invariant_dim3}
    The statement of Proposition \ref{prop:counting_invariant} also holds for knots $K_1, K_2$ in~$S^3$, although the proof is slightly different. According to \cites{waldhausen1968irreducible}, if $(E_{K_1}, \partial \nu K_1)$ and~$(E_{K_2}, \partial \nu K_2)$ are homotopy equivalent, then $K_1$ and $K_2$ have homeomorphic exteriors. By \cite{mca1989knots}, knots in $S^3$ with  homeomorphic exteriors are ambient isotopic as unoriented knots up to taking mirror images. We thus find a homeomorphism of pairs $h\colon (S^3,K_1)\to(S^3,K_2)$ which induces an isomorphism $h_*\colon \pi_1(K_1)\to\pi_1(K_2)$ mapping the conjugacy class of~$\mu_{K_1}$ either to $\mu_{K_2}$ or $\mu_{K_2}^{-1}$. We conclude that $\mathcal{N}_{K_1}(A,\sigma) = \mathcal{N}_{K_2}(A,\sigma)$ as above.
\end{remark}

\begin{remark}\label{rem:Suciu_meridians}
    Suciu used the idea of distinguishing ribbon discs with the same exterior by their meridians in~\cite{suciu:inf-many-ribbon-knots}, where he constructed infinitely many distinct (embedded) ribbon $n$-discs with the same exterior for $n\geq 3$. 
\end{remark}

Before we turn to an explicit calculation of the above defined 
invariant $\mathcal{N}_{K}(A,\sigma)$ for our knots~$K\in \{K_{B_m},K_{G_m}\}$, let us prove the following statement on 
knots~$K$ coming from fairly general RBG manifolds. Recall that the meridian $\mu_K$ is well-defined up to conjugacy in $\pi_1(K)$ and that we also denote this conjugacy class by~$\mu_K$. 

\begin{proposition}\label{prop:fund_groups}
    Let $W$ be an $(n+1)$-dimensional RBG manifold and suppose that $D^{n+1} \cup h_{n-2}(R) \cong D^3 \times S^{n-2}$ as in Construction~\ref{General R construction}, so 
    $B$ and~$G$ are  $(n-2)$-spheres in $\partial(D^{n+1} \cup h_{n-2}(R)) \cong S^2 \times S^{n-2}$. Then, for the associated knots $K_B$ and~$K_G$ from \Cref{thm:rbg_knots}, there are isomorphisms of pairs
    \begin{align}
        \begin{split}
            (\pi_1(K_{B}),  
            \ \mu_{K_{B}}) &\cong (\pi_1(S^2 \times S^{n-2} \setminus  (\overset{\circ}{\nu} B \cup \overset{\circ}{\nu} G)), \ \mu_{B}), \\ 
            (\pi_1(K_G), 
            \ \mu_{K_{G}}) &\cong (\pi_1(S^2 \times S^{n-2} \setminus  (\overset{\circ}{\nu} B \cup \overset{\circ}{\nu} G)), \ \mu_{G}).
            \label{eq:fund_gps_KB_KG}
        \end{split}
    \end{align}
\end{proposition}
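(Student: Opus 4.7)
The plan is to exploit the definition $K_B = f_B \circ B$ from the proof of \Cref{thm:rbg_knots}, where
$f_B \colon D^{n+1} \cup h_{n-2}(R) \cup h_{n-1}(G) \to D^{n+1}$ is the diffeomorphism arising from the simultaneous cancellation of $h_{n-2}(R)$ and $h_{n-1}(G)$. Restricting $f_B$ to the boundary produces a diffeomorphism
\[
E_{K_B} \;\cong\; \partial\bigl(D^{n+1} \cup h_{n-2}(R) \cup h_{n-1}(G)\bigr) \setminus \overset{\circ}{\nu} B
\]
that, since $f_B$ carries a tubular neighbourhood of $B$ onto one of $K_B$, identifies the conjugacy class of $\mu_{K_B}$ with that of $\mu_B$.

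Next, I would identify $\partial(D^{n+1} \cup h_{n-2}(R) \cup h_{n-1}(G))$ with the result of (loop) surgery on $\partial(D^{n+1} \cup h_{n-2}(R)) \cong S^2 \times S^{n-2}$ along the framed knot $G$, as recalled in \Cref{subsec:framings}. Because $B$ and $G$ are attached along disjoint spheres, the surgery is local to a tubular neighbourhood of $G$ and $B$ survives untouched, giving
\[
E_{K_B} \;\cong\; (S^2 \times S^{n-2})_G \setminus \overset{\circ}{\nu} B,
\]
under which a meridian of $B$ chosen disjoint from $\nu G$ is sent to $\mu_B$.

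Finally, I would invoke \Cref{proposition:fundamental_group} applied to the framed knot $G$ sitting inside the $n$-manifold $M \defeq (S^2 \times S^{n-2}) \setminus \overset{\circ}{\nu} B$ (the proposition explicitly allows $\partial M \neq \emptyset$, and $G$ has trivial normal bundle by \Cref{rem:framings}). The inclusion $E_G^M = M \setminus \overset{\circ}{\nu} G \hookrightarrow M(G) = (S^2 \times S^{n-2})_G \setminus \overset{\circ}{\nu} B$ then induces an isomorphism on fundamental groups, under which $\mu_B$, being a curve disjoint from $\nu G$ that lives in both spaces, is sent to itself. Composing this with the isomorphism induced by $f_B$ yields the first displayed isomorphism of pairs. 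The second isomorphism follows from the identical argument with the roles of $B$ and $G$ swapped and $f_G$ in place of $f_B$.

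The step I expect to require the most care is the bookkeeping of the meridian through the chain of identifications, in particular checking that $\mu_B$ really does survive the surgery on $G$ as a specific curve rather than merely up to homotopy, and that the application of \Cref{proposition:fundamental_group} sends this representative of $\mu_B$ in $E_G^M$ to the same representative in $M(G)$. However, this is immediate once one chooses the meridian of $B$ disjoint from $\nu G$ and observes that the inclusion $E_G^M \hookrightarrow M(G)$ is the identity on this region.
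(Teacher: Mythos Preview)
Your proposal is correct and follows essentially the same approach as the paper: both use the definition $K_B = f_B \circ B$, restrict $f_B$ to the boundary to identify $E_{K_B}$ with the surgery $(S^2 \times S^{n-2} \setminus \overset{\circ}{\nu} B)(G)$ while tracking $\mu_B \mapsto \mu_{K_B}$, and then invoke \Cref{proposition:fundamental_group} applied to $G$ in $M = S^2 \times S^{n-2} \setminus \overset{\circ}{\nu} B$ for the second isomorphism. Your treatment is in fact slightly more explicit about the meridian bookkeeping than the paper's, but the logical structure is identical.
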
 

\begin{proof}
    By definition, $K_{B} = f_{B}\circ B$ and~$K_{G} = f_{G} \circ G$ for diffeomorphisms $f_G$ and~$f_B$ as in \eqref{eq:f_G_and_f_B}. The surgery on $S^2\times S^{n-2}\setminus \overset{\circ}{\nu} B$ along $G$ is diffeomorphic, via the restriction of $f_B$, to $S^n\setminus \overset{\circ}{\nu} K_B$. Moreover this diffeomorphism maps $\mu_B$ to $\mu_{K_B}$. Hence 
    \begin{align*}
        (\pi_1(S^n \setminus \overset{\circ}{\nu} K_{B}), \ \mu_{K_{B}}) &\cong (\pi_1((S^2 \times S^{n-2} \setminus \overset{\circ}{\nu} B) (G)), \ \mu_{B})\\&\cong (\pi_1(S^2 \times S^{n-2} \setminus (\overset{\circ}{\nu} G\cup \overset{\circ}{\nu} B)), \ \mu_B),
    \end{align*}
    where the second isomorphism follows from \Cref{proposition:fundamental_group}. In the same way, one proves the result for $K_G$.
\end{proof}

\subsection{Explicit calculations for \texorpdfstring{$\boldsymbol{K_{B_m}}$}{KBm} and \texorpdfstring{$\boldsymbol{K_{G_m}}$}{KGm}}\label{subsec:explicit_reps} 
We first determine a presentation for the knot groups $ \pi_1(K_{B_m}) \cong \pi_1(K_{G_m})$.

\begin{proposition}\label{prop:fund_groups_expl}
    Let $F_m \defeq \pi_1(S^2 \times S^{n-2} \setminus (\overset{\circ}{\nu} B_m\cup ~\overset{\circ}{\nu} G_m))$. Then, for every~$m \geq 1$, we have $F_m \cong \pi_1(K_{B_m}) \cong \pi_1(K_{G_m})$ and a presentation for $F_m$ is given by 
    \begin{align*}
        F_m &\cong \langle x,y,a \mid (yx)^my(yx)^{-m}x^{-1}=1,\, (x^{-1}a x) a^{-1} x^{-1}( y a y^{-1} )=1\rangle,  
    \end{align*}
    where both $x$ and $y$ represent the conjugacy class of the meridian of $K_{B_m}$, while the generator $a$ represents the conjugacy class of the meridian of $K_{G_m}$.
\end{proposition}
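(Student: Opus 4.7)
The plan is to first reduce the claim to computing $F_m$ together with the meridian conjugacy classes by appealing to \Cref{prop:fund_groups}, and then to obtain the stated presentation of $F_m$ by reading it off the handle decomposition of the link complement described in \Cref{subsec:Kirby_diagram}. Indeed, \Cref{prop:fund_groups} immediately supplies isomorphisms of pairs $(\pi_1(K_{B_m}), \mu_{K_{B_m}}) \cong (F_m, \mu_{B_m})$ and $(\pi_1(K_{G_m}), \mu_{K_{G_m}}) \cong (F_m, \mu_{G_m})$, so it suffices to exhibit a presentation of $F_m$ in which $x,y$ are conjugate to $\mu_{B_m}$ and $a$ is conjugate to $\mu_{G_m}$.

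For $n=4$, the Kirby diagram on the right of \Cref{fig:Akbulut_trick} directly yields a presentation of $F_m=\pi_1((S^2\times S^2)\setminus (B_m\cup G_m))$, with one generator for each $1$-handle (dotted circle) and one relation for each $2$-handle. For general $n\geq 4$, the same presentation is obtained via the deformation retract of \Cref{rem:doubling}, which identifies $F_m$ with the fundamental group of the complement of $D^b_m\cup D^g_m$ in $S^2\times D^2$ and preserves boundary meridians.

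From this decomposition, the two $0$-handles of $D^b_m$ contribute two generators $x,y$, representing meridians of $B_m$ at the two flat discs $D^{n-2}_1$ and $D^{n-2}_2$. The three $0$-handles of $D^g_m$ contribute three meridian generators of $G_m$; the two bands of $D^g_m$ provide relations identifying these three generators up to conjugacy, leaving a single class represented by $a$. The remaining $1$-handles appearing in \Cref{fig:Akbulut_trick} (right), coming from the doubling of each ribbon disc and from $S^2\times S^{n-2}$ itself, either give rise to trivialised generators or cancel against their dual $2$-handles, leaving only $x$, $y$ and $a$. The two surviving relations come from the ribbon bands. The first, $(yx)^m y(yx)^{-m}x^{-1}=1$, reflects the band of $D^b_m$ attached along $\alpha_b$, whose arc wraps $m$ times alternately around $\partial D^{n-2}_2$ and $\partial D^{n-2}_1$: reading the meridians encountered along $\alpha_b$ produces the word $(yx)^m$, and the standard Wirtinger-type band relation expresses that $y$ and $x$ are conjugate via this word. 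The second relation, $(x^{-1}ax)\,a^{-1}\,x^{-1}\,(yay^{-1})=1$, comes from the two modified arcs $\beta'_i=\beta_i\#_{\delta_i}\gamma_i$ used to define $G_m$: since $\gamma_i$ represents $\mu_i^{-1}$ in $\pi_1(S^n\setminus K_m)$, traversing $\beta'_i$ conjugates $a$ by $\mu_i^{-1}$ (namely by $x^{-1}$ for $i=1$ and by $y^{-1}$ for $i=2$), and combining the two band relations of $D^g_m$ produces the stated expression after eliminating the two redundant meridian generators of $G_m$.

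The main obstacle will be the careful bookkeeping of orientations and conjugation conventions in the Kirby diagram of \Cref{fig:Akbulut_trick}, in order to obtain the relations in exactly the claimed form; a small error in either could produce inverses or cyclic rewrites. A secondary point is to verify cleanly that, for $n\geq 5$, the deformation retract of \Cref{rem:doubling} produces precisely the presentation obtained in dimension~$4$ with the meridians matched up. This, however, reduces to the observation that the retraction preserves the boundary of the ribbon discs and their normal meridian circles, which is how the meridians $x,y,a$ are defined in the first place.
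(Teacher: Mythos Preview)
Your overall strategy---reduce via \Cref{prop:fund_groups}, compute from the handle decomposition in \Cref{subsec:Kirby_diagram}, then extend to $n\geq 5$ via \Cref{rem:doubling}---matches the paper's. However, there is a genuine gap in how you derive the second relation.

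You claim that $(x^{-1}ax)\,a^{-1}\,x^{-1}\,(yay^{-1})=1$ arises by ``combining the two band relations of $D^g_m$'' after eliminating the redundant green meridians. This cannot work: the two bands of $D^g_m$ contribute exactly two relations, and these are entirely consumed in eliminating the two extra generators $a_1,a_3$ (expressing each as a conjugate of $a=a_2$). No relation among $x,y,a$ alone is left over from the green bands. In the paper's computation, the second relation comes instead from the $2$-handle $h_2$ of $S^2\times D^2$: its attaching circle (the red unknot $r$) links the dotted circles and contributes the word $a_1 a_2^{-1}\mu_1^{-1}a_3=1$; substituting $a_1=x^{-1}ax$ and $a_3=yay^{-1}$ then yields the stated relation. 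Your sentence that the handles ``from $S^2\times S^{n-2}$ itself \ldots cancel against their dual $2$-handles'' overlooks precisely this: $S^2\times D^2$ contributes no $1$-handle but one non-trivial $2$-handle relation, and that is where the second relation lives. (A minor additional slip: the green band conjugations are by $x^{-1}$ and by $y$, not uniformly by $\mu_i^{-1}$.)
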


\begin{proof}
    Throughout the proof, fix $m \geq 1$. The group isomorphisms $F_m \cong \pi_1(K_{B_m}) \cong \pi_1(K_{G_m})$ follow directly from \Cref{prop:fund_groups}. Note that under these isomorphisms, the meridians~$\mu_{K_{B_m}}$ and $\mu_{K_{G_m}}$ are identified with meridians of the spheres $B_m$ and~$G_m$. We will now first compute a presentation for $F_m$ in dimension $n=4$ 
    and then reduce the general case~$n \geq 5$ to this computation.

    For the case $n=4$, we will rely on the discussion from \Cref{subsec:Kirby_diagram}. Recall that $S^2 \times D^2$ has a handle decomposition that consists of one $0$- and one $2$-handle. We denote this $2$-handle by $h_2$. Consider the ribbon discs $D^b_m$ (in blue) and~$D^g_m$ (in green) in $S^2 \times D^2$ on the left-hand side of \Cref{fig:Akbulut_trick} from \Cref{subsec:RBG_constr_explicit}. The green disc $D^g_m$ has a handle decomposition with three $0$-handles and two $1$-handles; the blue disc $D^b_m$ has a handle decomposition with two $0$-handles $D^2_1$ and $D^2_2$ and one $1$-handle. This provides us with a handle decomposition of the complement of $D^b_m \cup D^g_m$ in $S^2 \times D^2$, which consists of one $0$-handle, five $1$-handles and four $2$-handles (see \eg~\cite{gompf-stipsicz:book}*{Proposition~6.2.1}), where the fourth $2$-handle corresponds to $h_2$. This handle decomposition can be described in a Kirby diagram by removing the small $0$-framed unknots in the Kirby diagram on the right-hand side of \Cref{fig:Akbulut_trick}. From this handle decomposition, we can directly compute a presentation of $\pi_1(S^2 \times D^2 \setminus( D^b_m \cup D^g_m))$, as we do next.
    
    Denote the meridians of the three $0$-handles that make up the green disc $D^g_m$ in \Cref{fig:Akbulut_trick} (left) by $a_3, a_2, a_1$ from left to right. Recall that we denote the meridians of~$\partial D^2_1$ and $\partial D^2_2$ from the construction of~$D^b_m$ by $\mu_1,\mu_2$. The push-forwards of all these five meridians represent generators of $\pi_1(S^2 \times D^2 \setminus(D^b_m \cup D^g_m))$. We can read off the following relations from the $2$-handle attachments in $S^2 \times D^2 \setminus (D^b_m \cup D^g_m)$ corresponding to the $2$-dimensional $1$-handles of $D^b_m \cup D^g_m$, and the $2$-handle~$h_2$:
    \begin{align*}
        \mu_2 a_2^{-1} \mu_2^{-1} a_3 &= 1  \quad (D^g_m)\quad \Rightarrow \quad a_3 = \mu_2 a_2 \mu_2^{-1},\\
        a_1^{-1} \mu_1^{-1} a_2 \mu_1&= 1 \quad (D^g_m) \quad \Rightarrow \quad a_1 = \mu_1^{-1} a_2 \mu_1, \\
        a_1a_2^{-1} \mu_1^{-1}a_3  &=1 \quad (h_2) \quad \Rightarrow \quad (\mu_1^{-1} a_2 \mu_1)a_2^{-1} \mu_1^{-1}(\mu_2 a_2 \mu_2^{-1})=1,\\
        (\mu_{2}\mu_{1})^m\mu_{2}(\mu_{2}\mu_{1})^{-m}\mu_{1}^{-1}&=1 \quad (D^b_m).
    \end{align*}
    Setting $x= \mu_1$, $y = \mu_2$ and $a = a_2$ for better readability, we obtain the presentation
    \begin{align*}
    \pi_1(&S^2 \times D^2 \setminus( D^b_m \cup D^g_m)) \cong \\
        &\langle x, y, a \mid (yx)^my(yx)^{-m}x^{-1} 
        = (x^{-1}a x) a^{-1} x^{-1}( y a y^{-1} ) = 1 \rangle.
    \end{align*}
    Since the spheres~$B_m$ and $G_m$ in $S^2 \times S^2$ are obtained as relative doubles of $D^b_m \cup D^g_m$ in $S^2 \times D^2$ (see \Cref{subsec:Kirby_diagram}), we have 
    \begin{align*}
    \pi_1(S^2 \times S^2\setminus &(B_m \cup G_m) ) \cong \pi_1(S^2 \times D^2 \setminus( D^b_m \cup D^g_m)) \\
        &\cong\langle x, y, a \mid (yx)^my(yx)^{-m}x^{-1} 
        =  (x^{-1}a x) a^{-1} x^{-1}( y a y^{-1} ) = 1 \rangle,
    \end{align*}
    where the first isomorphism can be obtained by a standard Seifert--van Kampen argument. Under these isomorphisms, $x$ and $y$ represent meridians of~$\partial D^b_m$ and thus~$B_m$, while $a$ represents a meridian of $\partial D^g_m$ and thus $G_m$. By \Cref{prop:fund_groups}, this yields the claim for $n=4$. Alternatively, we could have also read off the fundamental group presentation directly from the Kirby diagram for $S^2 \times S^2 \setminus (B_m \cup G_m)$ on the right-hand side of~\Cref{fig:Akbulut_trick}. In that case, we have additional $2$-handles coming from the relative doubling process described in \Cref{subsec:Kirby_diagram}, but the relations coming from these additional $2$-handles are trivial. 
    
    For $n \geq 5$, as pointed out in \Cref{rem:doubling} we can obtain $B_m$ and $G_m$ also as doubles of $(n-2)$-dimensional (properly embedded) ribbon discs in $S^{2} \times D^{n-2}$ by crossing the construction in dimension~$4$ with~$D^{n-4}$. The deformation retraction obtained in this way induces, by restriction, a deformation retraction of the complement of these discs in $S^{2} \times D^{n-2}$ onto~$S^2 \times D^2 \setminus( D^b_m \cup D^g_m)$, preserving the meridians. By applying Seifert--van Kampen to the relative double, we get the desired result.
\end{proof}

We will now compute the invariant $\mathcal{N}_{K}(A,\sigma)$ from \Cref{def:rep_knot_inv} for $K \in \{K_{B_m}, K_{G_m}\}$ using \Cref{prop:fund_groups_expl} and a specific finite group $A$. The alternating group of degree $5$, which we denote by~$A_5$, has order $60$ and is the smallest non-abelian simple group. 

\begin{proposition}
\label{proposition:A_5_reps}
    Consider the $5$-cycle $\sigma= (1 5 4 3 2)$ in~$A_5$. Let $m \geq 1$ be an integer with $m \equiv 1 \pmod{60}$. Then
    \begin{align*}
        \mathcal{N}_{K_{B_m}}(A_5,\sigma) = 6 \qquad \text{and} \qquad \mathcal{N}_{K_{G_m}}(A_5,\sigma)=1. 
    \end{align*}
\end{proposition}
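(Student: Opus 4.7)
My plan is to exploit the explicit presentation of $F_m \cong \pi_1(K_{B_m}) \cong \pi_1(K_{G_m})$ from \Cref{prop:fund_groups_expl} together with the arithmetic hypothesis $m \equiv 1 \pmod{60}$. Every element of $A_5$ has order in $\{1,2,3,5\}$, each of which divides $60$, so for any homomorphism $\varphi \colon F_m \to A_5$ we have $\varphi(yx)^m = \varphi(yx)$. Hence the first defining relation of $F_m$ collapses in $A_5$ to the braid relation $\beta\alpha\beta = \alpha\beta\alpha$, where $\alpha = \varphi(x)$ and $\beta = \varphi(y)$. Writing also $\gamma = \varphi(a)$, the two counts become the number of triples $(\alpha,\beta,\gamma) \in A_5^3$ satisfying
\begin{equation*}
    \beta\alpha\beta = \alpha\beta\alpha \quad \text{and} \quad \alpha^{-1}\gamma\alpha \cdot \gamma^{-1} \cdot \alpha^{-1} \cdot \beta\gamma\beta^{-1} = e,
\end{equation*}
with $\alpha = \sigma$ fixed for $K_{B_m}$, respectively $\gamma = \sigma$ fixed for $K_{G_m}$.

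For $\mathcal{N}_{K_{G_m}}(A_5,\sigma)$, I would first exhibit the easily checked representation $(\alpha,\beta,\gamma) = (\sigma,\sigma,\sigma)$, then argue uniqueness. Rearranging the second relation yields $\beta\sigma\beta^{-1} = \alpha\sigma\alpha^{-1}\sigma^{-1}\alpha$; the left-hand side is a $5$-cycle, which sharply constrains $\alpha$. Since $A_5$ has only five conjugacy classes, a case analysis on the class of $\alpha$ (using that the centraliser of $\sigma$ is $\langle\sigma\rangle$ of order~$5$, and that among the powers $\sigma^k$ only $\sigma$ and $\sigma^{-1}$ are $A_5$-conjugate to $\sigma$ whereas $\sigma^{\pm 2}$ lie in the other $A_5$-class of $5$-cycles) shows that $\alpha$ must lie in $\langle\sigma\rangle$. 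A short direct check (one power of $\sigma$ at a time) using the braid relation then forces $\alpha = \beta = \sigma$.

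For $\mathcal{N}_{K_{B_m}}(A_5,\sigma)$, with $\alpha = \sigma$ fixed the plan has two substeps. First, I enumerate the $\beta \in A_5$ satisfying $\beta\sigma\beta = \sigma\beta\sigma$. Using the classical identification $B_3 \cong \langle u,v \mid u^2 = v^3\rangle$ via $u = xyx$, $v = xy$, any such $\beta$ satisfies $(\sigma\beta)^3 = (\sigma\beta\sigma)^2$, and analysing the possible orders of $\sigma\beta$ and $\sigma\beta\sigma$ in $A_5$ (ruling out the identity and order-$3$ possibilities for $\sigma\beta\sigma$ on parity grounds, and matching against the constraints on $\sigma\beta$) yields a short finite list of admissible~$\beta$. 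Second, for each admissible $\beta$, the second relation becomes a single word equation of length four in the unknown $\gamma$; I would count its solutions in $A_5$ for each $\beta$ and aggregate to obtain the total count of~$6$.

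The main obstacle is the final enumeration. Although the braid reduction and the small size of $A_5$ make the search space genuinely small, verifying that the totals are exactly $6$ and $1$ is a calculational case check. In practice, I would either carry this out by a careful hand analysis exploiting the explicit conjugacy-class structure and structure constants of $A_5$, or more cleanly by a direct computation in a computer algebra system such as \texttt{GAP}.
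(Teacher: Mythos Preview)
Your proposal is correct and follows essentially the same approach as the paper: both use the presentation from \Cref{prop:fund_groups_expl} and the observation that, since $|A_5|=60$, the hypothesis $m\equiv 1\pmod{60}$ collapses the first relation to its $m=1$ form, so the representation counts are independent of such $m$. The only difference is cosmetic: the paper performs the finite enumeration for $m=1$ directly in \texttt{GAP}/\texttt{Sage} (see \Cref{sec:appendix_sage}) without any preliminary hand reduction, whereas you sketch a by-hand case analysis via the braid relation before falling back on a computer check.
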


\begin{proof}
    By virtue of \Cref{prop:fund_groups_expl}, we have to count the number of homomorphisms from $F_m$ to~$A_5$ that map~$x$ or~$a$ to $\sigma=(1 5 4 3 2) \in A_5$, respectively.  For ${m}=1$, this is an easy task using the function \emph{GroupHomomorphismByImages} in GAP via Sage~\cites{GAP4,sagemath} to check if a given assignment on the generators $x,y,a$ of $F_m$ extends to a group homomorphism. We simply apply this function to all possible maps $F_m \to A_5$ by assigning elements of $A_5$ to $x,y,a$. We obtain $\mathcal{N}_{K_{B_1}}(A_5,\sigma) = 6$ and~$\mathcal{N}_{K_{G_1}}(A_5,\sigma)=1$. See \Cref{sec:appendix_sage} for our Sage source code.
    
    We claim that this determines the number of representations also for any other~$m$ congruent to $1$ modulo $60$. To see this, note that the second relation in the presentation of~$F_m$ given in \Cref{prop:fund_groups_expl} does not depend on $m$. Consider the first relation and suppose that $m= 60 m'+1$ for some $m' \in \Z$. Since the order of $A_5$ is $60$, for every $\sigma_1, \sigma_2, \sigma_3 \in A_5$, the word $(\sigma_2\sigma_1)^m\sigma_2(\sigma_2\sigma_1)^{-m}\sigma_1^{-1}=(\sigma_2\sigma_1)^{60m^\prime +1}\sigma_2(\sigma_2\sigma_1)^{-(60m^\prime +1)}\sigma_1^{-1}$ is the trivial word in $A_5$ if and only if  $(\sigma_2\sigma_1)^1\sigma_2(\sigma_2\sigma_1)^{-1}\sigma_1^{-1}$ is. Therefore the assignment $\varphi \colon x \mapsto \sigma_1, y \mapsto \sigma_2, a \mapsto \sigma_3$ defines a group homomorphism $F_m \to A_5$ if and only if it defines a group homomorphism $F_1 \to A_5$. We obtain 
    \begin{align*}
        \mathcal{N}_{K_{B_m}}(A_5,\sigma) &= \mathcal{N}_{K_{B_1}}(A_5,\sigma)
         = 6 \qquad \text{and}\\
        \mathcal{N}_{K_{G_m}}(A_5,\sigma) &= \mathcal{N}_{K_{G_1}}(A_5,\sigma) =1 \qquad \text{for all } m \equiv 1 \pmod{60}.
       \quad \,
        \qedhere 
    \end{align*}
\end{proof}

\begin{corollary}\label{cor:compl_not_homot_rel_boundary}
    Let $m \geq 1$ be an integer with $m \equiv 1 \pmod{60}$. Then the exteriors of~$K_{B_{m}}$ and $K_{G_{m}}$ in $S^n$ are not homotopy equivalent rel boundary. 
\end{corollary}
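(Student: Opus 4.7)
The plan is to obtain the corollary as an immediate application of \Cref{prop:counting_invariant} combined with the computation in \Cref{proposition:A_5_reps}. Suppose, for contradiction, that the exteriors $E_{K_{B_m}}$ and $E_{K_{G_m}}$ are homotopy equivalent rel boundary. The goal is then to verify that the hypotheses of \Cref{prop:counting_invariant} are met for $A = A_5$ and $\sigma = (1\,5\,4\,3\,2)$, and to derive a contradiction from the mismatch $\mathcal{N}_{K_{B_m}}(A_5,\sigma) = 6 \neq 1 = \mathcal{N}_{K_{G_m}}(A_5,\sigma)$ established in \Cref{proposition:A_5_reps}.

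The only non-trivial check is that $\sigma$ is conjugate to $\sigma^{-1}$ in $A_5$. I would verify this explicitly: since $\sigma^{-1} = (1\,2\,3\,4\,5)$, the permutation $g$ defined by $g(1)=1$, $g(5)=2$, $g(4)=3$, $g(3)=4$, $g(2)=5$ satisfies $g\sigma g^{-1} = \sigma^{-1}$. Written in cycle notation, $g = (2\,5)(3\,4)$ is a product of two transpositions, hence lies in $A_5$. (Alternatively, one can observe that in $A_5$ the two conjugacy classes of $5$-cycles are exchanged by the outer automorphism induced by conjugation by a transposition, and that $\sigma$ and $\sigma^{-1}$ have the same cycle structure and lie in the same $A_5$-class because the conjugating element from $S_5$ can be chosen to be even.)

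Given this, \Cref{prop:counting_invariant} applies with $A = A_5$ and our chosen $\sigma$, yielding $\mathcal{N}_{K_{B_m}}(A_5,\sigma) = \mathcal{N}_{K_{G_m}}(A_5,\sigma)$ under the assumption of a homotopy equivalence of pairs. But \Cref{proposition:A_5_reps} gives the values $6$ and $1$ respectively, a contradiction. Therefore the exteriors cannot be homotopy equivalent rel boundary.

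The only potential obstacle is the conjugacy check for $\sigma$ and $\sigma^{-1}$ in $A_5$, since $A_5$ is the minimal case where $5$-cycles split into two $A_5$-conjugacy classes (of size $12$ each). Once the explicit even conjugating permutation is exhibited, the corollary follows in a single line from the two cited propositions.
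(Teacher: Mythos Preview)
Your proposal is correct and follows essentially the same approach as the paper's proof: assume a homotopy equivalence rel boundary, apply \Cref{prop:counting_invariant} with $A=A_5$ and $\sigma=(1\,5\,4\,3\,2)$, and derive a contradiction from \Cref{proposition:A_5_reps}. The paper simply asserts that $\sigma$ is conjugate to its inverse in $A_5$, whereas you supply the explicit conjugating element $g=(2\,5)(3\,4)\in A_5$, which is a welcome bit of extra detail.
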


\begin{proof}
    Notice that $\sigma=(1 5 4 3 2)\in A_5$ is conjugate to its inverse. If the exteriors of~$K_{B_{m}}$ and~$K_{G_{m}}$ were homotopy equivalent rel boundary for some $m \equiv 1 \pmod{60}$, then by \Cref{prop:counting_invariant} we would have $\mathcal{N}_{K_{B_m}}(A_5,\sigma) = \mathcal{N}_{K_{G_m}}(A_5,\sigma)$. But \Cref{proposition:A_5_reps} shows 
    that these numbers differ, a contradiction. 
\end{proof}

\subsection{Fox calculus to distinguish the pairs}\label{subsec:Fox_calculus}

In this subsection, using the Fox calculus~\cites{fox:free_diff_calc,fox_crowell} mentioned in \Cref{subsec:knot-invariants}, we compute the Alexander polynomials of our knots $K_{B_m}$ and $K_{G_m}$ to distinguish the pairs $\{(K_{B_{m}}, K_{G_{m}})\}_{m \geq 1}$ from one another.

\begin{proposition}\label{prop:Alex_polys}
    For every $m \geq 1$, the Alexander polynomials of $K_{B_m}$ and $K_{G_m}$, up to multiplication by units in $\Z[t,t^{-1}]$, are
    \begin{align*}
        \Delta_{K_{B_m}}(t)  =         \Delta_{K_{G_m}}(t)  =        \sum_{k=0}^{2m}(-1)^{k}t^{k-2}. 
    \end{align*}
\end{proposition}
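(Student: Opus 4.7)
The plan is to apply Fox calculus to the two-relator presentation of $\pi_1(K_{B_m}) \cong \pi_1(K_{G_m}) \cong F_m$ from \Cref{prop:fund_groups_expl} and exploit the fact that different columns of the resulting Alexander matrix can be used to compute $\Delta_{K_{B_m}}$ and $\Delta_{K_{G_m}}$, since the meridian of $K_{B_m}$ is represented by $x$ (or $y$) while the meridian of $K_{G_m}$ is represented by $a$. First, I would verify that the abelianisation $\phi \colon F_m \to \langle t \rangle \cong H_1(E_{K}) \cong \Z$ sends $x, y, a \mapsto t$; this is immediate by adding the exponents in each relator $r_1 = (yx)^m y (yx)^{-m} x^{-1}$ and $r_2 = x^{-1}axa^{-1}x^{-1}yay^{-1}$.

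Next, I would compute the $2\times 3$ Alexander matrix $A_m$ whose $(i,j)$-entry is $\phi(\partial r_i/\partial g_j)$ for $g_j \in \{x, y, a\}$ using the standard Fox derivative rules. A direct telescoping calculation on $r_1$ (using $\partial(yx)^m/\partial y = 1 + (yx) + \dots + (yx)^{m-1}$ and the analogous formula for $\partial/\partial x$) yields, after reindexing,
\[
    \phi\bigl(\partial r_1/\partial y\bigr) = P(t), \qquad \phi\bigl(\partial r_1/\partial x\bigr) = -P(t), \qquad \phi\bigl(\partial r_1/\partial a\bigr) = 0,
\]
where $P(t) \defeq \sum_{k=0}^{2m}(-1)^k t^k$. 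For $r_2$, a short direct computation of contributions at each letter gives
\[
    \phi\bigl(\partial r_2/\partial x\bigr) = 1 - 2t^{-1}, \qquad \phi\bigl(\partial r_2/\partial y\bigr) = t^{-1} - 1, \qquad \phi\bigl(\partial r_2/\partial a\bigr) = t^{-1}.
\]

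Since $F_m$ has deficiency one, the Alexander polynomial of each knot equals (up to units in $\Lambda=\Z[t,t^{-1}]$) the determinant of the $2 \times 2$ matrix obtained by deleting the column corresponding to a meridian. For $K_{B_m}$, I delete the $x$-column (noting $\phi(x) = t$) to obtain
\[
    \det\begin{pmatrix} P(t) & 0 \\ t^{-1} - 1 & t^{-1} \end{pmatrix} = t^{-1} P(t).
\]
For $K_{G_m}$, I delete the $a$-column to obtain
\[
    \det\begin{pmatrix} -P(t) & P(t) \\ 1 - 2t^{-1} & t^{-1} - 1 \end{pmatrix} = -P(t)\bigl[(t^{-1}-1) + (1 - 2t^{-1})\bigr] = t^{-1} P(t).
\]
Both determinants coincide, and multiplying by the unit $t^{-1}$ yields the expression $\sum_{k=0}^{2m}(-1)^k t^{k-2}$, proving the proposition.

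The main obstacle is purely bookkeeping: organising the Fox derivative of $(yx)^m y (yx)^{-m} x^{-1}$ so that the alternating sum $\sum(-1)^k t^k$ visibly emerges. Everything else (the computation for the short word $r_2$, and the two determinantal evaluations) is essentially immediate once the Fox derivative conventions are fixed. As a small sanity check, substituting $t=1$ gives $P(1) = 1$, consistent with the fact that $\Delta_K(1) = \pm 1$ for any classical-type knot with $H_1(E_K) \cong \Z$, so both computations are at least compatible with the shape of a well-defined Alexander polynomial.
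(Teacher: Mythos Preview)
Your proof is correct and follows essentially the same approach as the paper: Fox calculus applied to the deficiency-one presentation of $F_m$ from \Cref{prop:fund_groups_expl}, followed by computing a $2\times 2$ minor of the resulting Alexander matrix. Your Fox derivatives and determinants check out (you use a cyclic shift of the paper's $r_1$, which only changes things by a unit). One small remark: since the Alexander polynomial depends only on the knot group together with its abelianisation map, and $\pi_1(K_{B_m})\cong\pi_1(K_{G_m})\cong F_m$ with all of $x,y,a$ mapping to $t$, the two polynomials agree automatically and any column may be deleted; your separate computations for the $x$- and $a$-columns are both valid but not independently required.
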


\begin{proof}
    Fix $m \geq 1$. By \Cref{prop:fund_groups_expl}, both knot groups $\pi_1( K_{B_m})\cong \pi_1(K_{G_{m}})$ have a presentation~$F_m$ with generators $x$, $y$ and $a$ and relations 
    \begin{align*}
        r_1 = x^{-1} (yx)^m y (yx)^{-m} \qquad \text{and} \qquad 
        r_2 = (x^{-1}a x) a^{-1} x^{-1}( y a y^{-1} ).  
    \end{align*}
    Computing the Fox derivatives, we obtain
    \begin{align*}
    \frac{\partial r_1}{\partial x} &=-x^{-1}+x^{-1}\left(\left(\sum_{k=0}^{m-1} (yx)^k \right) y + (yx)^m y \left(\sum_{k=0}^{m-1} (yx)^{-k} \right) \left(-x^{-1}\right)\right),\\
    \frac{\partial r_1}{\partial y} &= x^{-1}\left(\sum_{k=0}^{m} (yx)^k -(yx)^my (yx)^{-1}\sum_{k=0}^{m-1} (yx)^{-k} \right),\\ 
    \frac{\partial r_1}{\partial a}&=0,\qquad \\
    \frac{\partial r_2}{\partial x}&=-x^{-1}+ x^{-1}a-x^{-1}axa^{-1}x^{-1},\\
    \frac{\partial r_2}{\partial y}&=x^{-1}axa^{-1}x^{-1}\left(1-yay^{-1}\right), \qquad \\
    \frac{\partial r_2}{\partial a}&=x^{-1}\left(1+ax\left(-a^{-1}+a^{-1}x^{-1}y\right)\right).
    \end{align*}
    Substituting $t$ for $x$, $y$ and $a$ gives us
    \[
        \begin{pdmatrix}
            \frac{\partial r_1}{\partial x} & \frac{\partial r_1}{\partial y} & \frac{\partial r_1}{\partial a} \\
            \frac{\partial r_2}{\partial x} & \frac{\partial r_2}{\partial y} & \frac{\partial r_2}{\partial a}
        \end{pdmatrix}_{\{x=y=a=t\}}=
        \begin{pdmatrix}
            \sum_{k=-1}^{2m-1}(-1)^{k}t^{k} & -\sum_{k=-1}^{2m-1}(-1)^{k}t^{k} & 0 \\
            1-2t^{-1} & t^{-1}-1 & t^{-1}
    \end{pdmatrix}.
    \]
    Computing the determinant of a $(2\times 2)$-submatrix, we obtain the claim.
\end{proof}

\subsection{Proof of \texorpdfstring{\Cref{theorem:non-isotopic}}{Theorem 1.1}}
\label{subsec:proof_thm_non-iso}

We now  prove the following \Cref{theorem:non-isotopic_compl}, which directly implies \Cref{theorem:non-isotopic}.

\begin{theorem}\label{theorem:non-isotopic_compl}
    For every $n\geq 4$, there exist families $(K_{B_m})_{m\geq 1}$ and~$(K_{G_m})_{m\geq 1}$ of knots in $S^n$ such that
    \begin{enumerate}[label=(\roman*)]
        \item the exteriors of $K_{B_m}$ and $K_{B_{m^\prime}}$ are homotopy equivalent if and only if $m=m'$,\label{thm:i}
        \item the exteriors of $K_{G_m}$ and $K_{G_{m^\prime}}$ are homotopy equivalent if and only if $m=m'$,\label{thm:ii}
        \item for all $m$, the exteriors of $K_{B_m}$ and $K_{G_m}$ are not homotopy equivalent rel boundary (see \Cref{def:homo_equiv_rel_bound}), but \label{thm:iii}
        \item for all $m$, the traces $X(K_{B_m})$ and $X(K_{G_m})$ are orientation-preservingly diffeomorphic. \label{thm:iv}
    \end{enumerate}
In particular, for each $m \geq 1$, the two knots~$K_{B_m}$ and $K_{G_m}$ are not ambient isotopic, but have orientation-preservingly diffeomorphic traces. 
\end{theorem}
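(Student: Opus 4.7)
The plan is to assemble the statement from the results already established in the excerpt. First, the families $(K_{B_m})_{m\geq 1}$ and $(K_{G_m})_{m\geq 1}$ appearing in \Cref{theorem:non-isotopic_compl} should be taken to be the subfamilies of the knots constructed in \Cref{subsec:RBG_constr_explicit} whose parameters are congruent to $1$ modulo $60$, reindexed by $m \mapsto 60(m-1)+1$, so that \Cref{cor:compl_not_homot_rel_boundary} is available for every member.

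Under this convention, item (iv) is exactly \Cref{prop:summ_constr}, and item (iii) is exactly \Cref{cor:compl_not_homot_rel_boundary}, which itself rests on the representation counts $\mathcal{N}_{K_{B_m}}(A_5,\sigma)=6$ and $\mathcal{N}_{K_{G_m}}(A_5,\sigma)=1$ of \Cref{proposition:A_5_reps}. No new argument is required for either (iii) or (iv).

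For items (i) and (ii), the plan is to distinguish knot exteriors via the Alexander polynomial, which is a homotopy invariant of the exterior. Any homotopy equivalence $E_{K_{B_m}} \simeq E_{K_{B_{m'}}}$ induces an isomorphism of knot groups, hence an automorphism of $H_1 \cong \Z$, which is either the identity or the negation. It lifts to a homotopy equivalence of infinite cyclic covers that is equivariant up to the substitution $t \mapsto t^{-1}$ on deck transformations, so the Alexander modules are isomorphic as $\Z[t,t^{-1}]$-modules up to this substitution, and the Alexander polynomials coincide up to units in $\Z[t,t^{-1}]$ and $t \mapsto t^{-1}$. By \Cref{prop:Alex_polys}, $\Delta_{K_{B_m}}(t) = \sum_{k=0}^{2m}(-1)^k t^{k-2}$ has breadth $2m$ (the difference between the largest and smallest exponents with non-zero coefficient), and the breadth is invariant under both units and $t \mapsto t^{-1}$. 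Hence different values of $m$ yield polynomials of different breadths, proving (i); the identical argument proves (ii).

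The ``in particular'' conclusion is immediate from (iii): an ambient isotopy $K_{B_m} \simeq K_{G_m}$ would produce a diffeomorphism of pairs $(S^n, K_{B_m}) \to (S^n, K_{G_m})$, restricting to a diffeomorphism of exteriors preserving boundaries and hence a homotopy equivalence rel boundary, contradicting (iii). Since all the substantial technical work, namely the Fox-calculus computation of the Alexander polynomial and the counting of representations into $A_5$, has already been carried out in the preceding propositions, no genuinely new obstacle arises in the present theorem. The only mild subtlety is the $t \mapsto t^{-1}$ ambiguity in the Alexander polynomial induced by a homotopy equivalence, and this is handled cleanly by the invariance of the breadth.
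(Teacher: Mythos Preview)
Your proposal is correct and follows essentially the same approach as the paper: both arguments pass to the subfamily indexed by parameters congruent to $1$ modulo $60$, invoke \Cref{prop:summ_constr} for (iv), \Cref{cor:compl_not_homot_rel_boundary} for (iii), and distinguish the exteriors for different $m$ via the breadth of the Alexander polynomial from \Cref{prop:Alex_polys}. Your treatment is in fact slightly more careful than the paper's in that you explicitly address the $t\mapsto t^{-1}$ ambiguity arising from a homotopy equivalence that need not preserve meridian orientation.
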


\begin{proof}
    Fix $n \geq 4$. For every $m \geq 1$, the knots $K_{B_m}$ and~$K_{G_m}$ in $S^n$ have orientation-preservingly diffeomorphic traces (see \Cref{prop:summ_constr}), so we have \ref{thm:iv}. By \Cref{prop:Alex_polys}, the Alexander polynomials $\Delta_{K_{B_m}} = \Delta_{K_{G_m}}$ and $\Delta_{K_{B_{m^\prime}}} = \Delta_{K_{G_{m^\prime}}}$ have distinct breadths (\ie differences between highest and lowest non-trivial powers) for $m \neq m^\prime$. This shows \ref{thm:i} and~\ref{thm:ii}. Now suppose that $m \equiv 1 \pmod{60}$. By \Cref{cor:compl_not_homot_rel_boundary}, the exteriors of~$K_{B_{m}}$ and~$K_{G_{m}}$ in $S^n$ are not homotopy equivalent rel boundary. The pairs $\{(K_{B_{m}}, K_{G_{m}})\}$ for $m= 60 m^\prime+1$, $m^\prime \geq 1$, thus give rise to the desired infinite family with \ref{thm:i}--\ref{thm:iv}.
\end{proof}

\subsection{On a higher-dimensional light bulb theorem for links}\label{sec:LBT}

The $4$-dimensional light bulb theorem proven by Gabai~\cite{LBT} states that if $S$ is a smoothly embedded $2$-sphere in $S^2 \times S^2$ that is homologous (and therefore homotopic) to $\{\text{pt}\} \times S^2$ and intersects~$S^2 \times \{\text{pt}\}$ transversely in one point, then $S$ is isotopic to $\{\text{pt}\} \times S^2$ via an isotopy fixing $S^2 \times \{\text{pt}\}$ pointwise. An analogous result for knots in $S^2 \times S^{n-2}$ for $n \geq 5$, but under an additional condition, was proved by Litherland~\cite{litherland:LBT}. Specifically, he showed that if $S$ is a smoothly embedded $(n-2)$-sphere in $S^2 \times S^{n-2}$ that is homotopic to $\{\text{pt}\} \times S^{n-2}$ and intersects~$S^2 \times \{\text{pt}\}$ transversely in one point, and~$\iota (S) \subseteq S^{n+1}$ bounds a smoothly embedded $(n-1)$-ball in $S^{n+1}$, where $$\iota\colon S^2\times S^{n-2}  \hookrightarrow \partial D^3 \times D^{n-1} \hookrightarrow \partial D^{n+2} = S^{n+1}$$ is given by the standard inclusions, then $S$ is isotopic to $\{\text{pt}\} \times S^{n-2}$. In contrast to the above results, the proof of  \Cref{theorem:non-isotopic_compl} in the previous sections yields the following statement for links. 

\begin{corollary}\label{cor:LBT}
    Let $n \geq 4$. There is a $2$-component link $L$ in $S^2 \times S^{n-2}$ such that
    \begin{enumerate}[label=(\roman*)]
        \item the image of each component of $L$ is isotopic (as an oriented submanifold) in~$S^2 \times S^{n-2}$ to $\{z\} \times S^{n-2}$ for $z \in S^2$, but \label{item:one}
        \item $L$ is not isotopic to the link $\{z_1, z_2\} \times S^{n-2}$ for $z_1 \neq z_2 \in S^2$. 
    \end{enumerate}
\end{corollary}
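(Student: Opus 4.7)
The plan is to take $L \defeq B_m \cup G_m \subseteq S^2 \times S^{n-2}$ for any integer $m \geq 1$ with $m \equiv 1 \pmod{60}$, where $B_m$ and $G_m$ are constructed in \Cref{subsec:RBG_constr_explicit} as submanifolds of $\partial(D^{n+1} \cup h_{n-2}(R)) \cong S^2 \times S^{n-2}$. I would then verify the two required properties of $L$ separately.

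For the first property, I would handle each component individually. \Cref{lem:GequalsGtilda} gives directly that $G_m$ is isotopic in $S^2 \times S^{n-2}$ to the standard sphere $\{x_1\} \times S^{n-2}$. For the other component $B_m = (\{w\} \times S^{n-2}) \# K_m$, note that the connect sum is a local modification of $\{w\} \times S^{n-2}$ inside a contractible ball, so $B_m$ is homotopic to $\{w\} \times S^{n-2}$; moreover, by construction, $B_m$ still meets the transverse sphere $\mathcal{B}_R = S^2 \times \{z\}$ transversely in a single point. For $n = 4$, Gabai's four-dimensional light bulb theorem~\cite{LBT} then yields the desired isotopy from $B_m$ to $\{w\} \times S^2$. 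For $n \geq 5$, I would instead invoke Litherland's higher-dimensional analogue~\cite{litherland:LBT}; its additional disc-bounding hypothesis for $\iota(B_m) \subseteq S^{n+1}$ is met by boundary-connect-summing the ribbon disc for $K_m$ in $D^{n+1} \subseteq S^{n+1}$ (which exists since $K_m$ is ribbon in $S^n$) with the obvious disc for $\iota(\{w\} \times S^{n-2})$ in $S^{n+1}$.

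For the second property, I would argue by contradiction. Suppose there were an ambient isotopy of $S^2 \times S^{n-2}$ carrying $L$ to $L_0 \defeq \{z_1, z_2\} \times S^{n-2}$ for some $z_1 \neq z_2 \in S^2$. The two components of $L_0$ simultaneously meet $\mathcal{B}_R$ transversely in the distinct points $(z_1, z)$ and $(z_2, z)$, so pulling back along the inverse isotopy would yield an ambient isotopy of $S^2 \times S^{n-2}$ after which $B_m$ and $G_m$ simultaneously intersect $\mathcal{B}_R$ transversely in a single point each. \Cref{lemma:simult_dual} would then force $\Sigma_B = K_{B_m}$ and $\Sigma_G = K_{G_m}$ to be ambient isotopic as unoriented submanifolds of $S^n$, so their exteriors would be diffeomorphic as manifolds with boundary, and hence homotopy equivalent rel boundary. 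This contradicts \Cref{theorem:non-isotopic_compl}\ref{thm:iii}.

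The main technical obstacle I foresee is the explicit construction of the $(n-1)$-disc in $S^{n+1}$ required by Litherland's theorem for $n \geq 5$; once this step is in place, every other ingredient is either a direct citation of a previously established lemma or a straightforward transversality and pull-back argument.
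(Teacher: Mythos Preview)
Your approach is essentially the same as the paper's: take $L = B_m \cup G_m$ from the explicit RBG construction, establish property~(i) for $G_m$ via \Cref{lem:GequalsGtilda}, and obtain property~(ii) by combining \Cref{lemma:simult_dual} with the non-isotopy of $K_{B_m}$ and $K_{G_m}$ coming from \Cref{theorem:non-isotopic_compl}.

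One point worth flagging: for property~(i) in the case of $B_m$, the paper simply cites \Cref{lem:GequalsGtilda,prop:RBG-mfd_from_constr}, but the former only treats $G_m$ and the latter only records that $B_m$ is geometrically dual to $\mathcal{B}_R$---neither says that $B_m$ is isotopic to a standard fibre $\{w\}\times S^{n-2}$. Your explicit appeal to Gabai's theorem for $n=4$ and to Litherland's result (together with the ribbon-disc verification) for $n\geq 5$ is the more honest way to close this step, and you have correctly isolated the construction of the bounding $(n-1)$-disc for $\iota(B_m)$ in $S^{n+1}$ as the remaining technical point; your proposed boundary-connect-sum of the ribbon disc for $K_m$ with the obvious disc $\{w\}\times D^{n-1}$ does work once one places the ribbon disc in a normal $D^{n+1}$-thickening of the connect-sum ball.
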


\begin{proof}
    Consider an RBG manifold for which the associated knots $K_B$ and $K_G$ are  non-isotopic. Examples of such manifolds are given in the proof of \Cref{theorem:non-isotopic_compl}. This proof shows that $B(S^{n-2})$ and $G(S^{n-2})$ are both isotopic to $\{\operatorname{pt}\} \times S^{n-2}$ in~$S^2 \times S^{n-2}$; see in particular \Cref{lem:GequalsGtilda,prop:RBG-mfd_from_constr}. However, according to \Cref{lemma:simult_dual}, we cannot simultaneously isotope~$B(S^{n-2})$ and $G(S^{n-2})$ to $\{\operatorname{pt}\} \times S^{n-2}$. Therefore, the link $B(S^{n-2}) \cup G(S^{n-2})$ in $S^2 \times S^{n-2}$ fulfils the desired properties. 
\end{proof}

The lightbulb theorem for links of spheres in a 4-manifold was shown in \cite{LBT} when each component has its own geometric dual sphere. \Cref{cor:LBT} above demonstrates that this is a necessary condition for a higher-dimensional light bulb theorem for links to hold. In this setting with geometric duals, the generalization of Kosanovic--Teichner~\cite{KosanovicTeichner} can be applied to study the links. 

\begin{remark}\label{rem:LBT}
    For $n=4$, \Cref{cor:LBT} can also be obtained as follows. Take two standard spheres $\{x_1\} \times S^2$ and $\{x_2\} \times S^2$ in $S^2 \times S^2$. Adjust the second one by taking a connected sum with a non-trivial local knot $K$ in $D^4 \subseteq S^4$ away from the first one. Then the link given by the disjoint union of these two knots in $S^2 \times S^2$ is not isotopic to the unlink of two $\{\operatorname{pt}\} \times S^{2}$ spheres, because the fundamental group of the link exterior is isomorphic to $\pi_1(K)$, which we can choose to be not isomorphic to~$\Z$. However, by~\cite{LBT}, the link fulfils \ref{item:one} in \Cref{cor:LBT}. A similar proof gives the analogous result for $n=3$ (see also the construction described on~\cite{litherland:LBT}*{p.~353}). However, by \Cref{lemma:simult_dual}, these links are not interesting from the point of view of RBG manifolds, because they produce isotopic associated knots in~$S^n$. 
\end{remark}

As noted above, for the RBG manifolds constructed in \Cref{subsec:RBG_constr_explicit}, the spheres $B_m(S^{n-2})$ and~$G_m(S^{n-2})$ cannot be simultaneously geometrically dual to the belt sphere $\mathcal{B}_R$. Note that we can also recover this fact from the presentation of the knot groups~$F_m$ computed in \Cref{prop:fund_groups_expl}. By \Cref{lemma:simult_dual}, if $B(S^{n-2})$ and~$G(S^{n-2})$ admitted a simultaneous geometric dual, then the elements~$x, a$ representing $\mu_{K_{B_m}}$ and~$\mu_{K_{G_m}}$ would be conjugate in~$F_m$. However, for every $m \equiv 1 \pmod{60}$, the map 
\begin{align*}
    \varphi \colon F_m \to A_5, \quad 
    x \mapsto (15432), \quad 
    y \mapsto (12453),\quad 
    a \mapsto (245)
\end{align*}
defines a homomorphism which does not map $x$ and $a$ to conjugate elements in $A_5$.

\section{Surgeries, traces and Gluck twists}
\label{sec:traces_Gluck}

In this section, we will leave the realm of RBG manifolds and explore the relationship between surgeries, traces and Gluck twists. This will lay  the groundwork for constructing in \Cref{sec:inf_family}, for every $n \geq 5$, infinitely many non-isotopic knots in $S^n$ that all have orientation-preservingly diffeomorphic traces. 

Throughout this section we assume that $n \geq 4$.

\subsection{Gluck twists}\label{subsec:gluck}
We start by discussing Gluck twists.

\begin{definition}\label{def:gluck_twist}
    Let $K \colon S^{n-2} \hookrightarrow S^n$ be a knot in $S^n$, and fix an identification of~$\nu K$ with $ D^2 \times S^{n-2}$. The \emph{Gluck twist of~$S^n$ along $K$} is the oriented, connected, closed, smooth $n$-manifold
    \begin{align*}
        S^n_K=(S^n \setminus \overset{\circ}{\nu} K) &\cup_{\psi} (D^2 \times S^{n-2}), \qquad \text{where}\\
        \psi \colon S^1 \times S^{n-2} \cong \partial (S^n \setminus \overset{\circ}{\nu} K) &\to \partial D^2 \times S^{n-2}=S^1 \times S^{n-2},\\
        (e^{it},x) &\mapsto (e^{it}, \rho_t(x)),
    \end{align*}
    and $\rho_t$ denotes the rotation of the $(n-2)$-sphere $\{e^{it}\} \times S^{n-2}$ about its standard diameter by angle $t$; see \cite{Gluck} for the case $n=4$. The embedding of~$\{1\} \times S^{n-2}$ defines a knot $T(K)\colon S^{n-2} \hookrightarrow S^n_K$, which we call the \emph{Gluck twist of $K$}.
\end{definition}

In \Cref{def:gluck_twist}, we use again that a knot in $S^n$ has a unique framing. Thus the oriented diffeomorphism type of $S^n_K$ is well-defined and does not depend on the choice of identification $\nu K \cong D^2 \times S^{n-2}$ (see also \Cref{subsec:framings}).

Notice that Gluck twisting $S^n_K$ along $T(K)$, similarly to the operation in \Cref{def:gluck_twist}, yields the original knot $K$, \ie $T(T(K))=K$. For $n =4$, Gluck~\cite{Gluck} showed that there are at most two $2$-knots with homeomorphic complements, and, if $K$ and~$J$ are non-isotopic such $2$-knots, then $K$ has to be isotopic to the Gluck twist $T(J)$ of~$J$. In higher dimensions, for $n\geq 5$, it has been shown in \cites{browder,lashofshaneson} that there are at most two knots in $S^n$ (up to reparametrisation) with diffeomorphic exteriors. Since $K$ and $T(K)$ have diffeomorphic exteriors $S^n \setminus \overset{\circ}{\nu} K \cong S^n_K \setminus (D^2 \times S^{n-2}) $ and the Gluck twist map $\psi$ extends over $S^1 \times D^{n-1}$, it follows that $K$ and $T(K)$ also share the same surgery, \ie $S^n_K(T(K)) \cong S^n(K)$. On the other hand, their traces might differ, as we will discuss below in \Cref{criterion}.

\begin{remark}\label{rem:Gluck_manifold}
    For any knot $K$ in $S^n$, using the Seifert--van Kampen theorem and the Mayer--Vietoris sequence, one can easily see that the Gluck twist $S^n_K$ along $K$ is a simply connected homology sphere. By a standard argument using the Hurewicz and Whitehead theorems, it is a homotopy $n$-sphere. As a consequence of the topological Poincaré conjecture, $S^n_K$ is hence homeomorphic to~$S^n$ \cites{freedman:top_4-mfds, smale}. Nevertheless, as a smooth $n$-manifold, $S^n_K$ could possibly be a non-standard $n$-sphere.
\end{remark}

\begin{proposition}\label{prop:std}
    For $n \geq 5$, let $K \colon S^{n-2} \hookrightarrow S^n$ be a knot in $S^n$. Suppose that $K$ extends to an embedding $D^{n-1} \hookrightarrow D^{n+1}$. Then $S^n_K$ is diffeomorphic to~$S^n$.
\end{proposition}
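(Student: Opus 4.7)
The plan is to realise $S^n_K$ as the boundary of an auxiliary $(n+1)$-manifold obtained by a ``Gluck twist of $D^{n+1}$ along the disc $\tilde K$'', and then to show that this twisted manifold is diffeomorphic to $D^{n+1}$ itself. The conclusion $S^n_K \cong S^n$ will then follow by restricting to boundaries.

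For the setup, fix a tubular neighbourhood $\nu \tilde K \cong D^{n-1} \times D^2$ of $\tilde K$ in $D^{n+1}$. Since $\tilde K$ is properly embedded, $\nu \tilde K$ meets $\partial D^{n+1}$ precisely in $\nu K \cong S^{n-2} \times D^2$, while the ``interior part'' $D^{n-1} \times S^1 \subset \partial \nu \tilde K$ lies in $\intr(D^{n+1})$. Writing $W \defeq D^{n+1} \setminus \intr(\nu \tilde K)$, one obtains the decomposition $D^{n+1} = W \cup_{\mathrm{id}} (D^{n-1} \times D^2)$, glued along $D^{n-1} \times S^1$. Extending the Gluck twist rotation $\rho_t \in SO(n-1)$ from $S^{n-2}$ to $D^{n-1}$ (automatic, since $\rho_t$ already acts on $\mathbb{R}^{n-1}$), I will define $\tilde\psi \colon D^{n-1} \times S^1 \to D^{n-1} \times S^1$ by $\tilde\psi(y, e^{it}) \defeq (\rho_t(y), e^{it})$, and form the twisted manifold $(D^{n+1})_{\tilde K} \defeq W \cup_{\tilde \psi} (D^{n-1} \times D^2)$. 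A direct boundary calculation will then show $\partial(D^{n+1})_{\tilde K} = E_K \cup_\psi (S^{n-2} \times D^2) = S^n_K$: the free boundary pieces of $W$ and $D^{n-1} \times D^2$ are $E_K$ and $S^{n-2} \times D^2$ respectively, and the induced gluing along the shared corner $S^{n-2} \times S^1$ coincides with the original Gluck twist $\psi$, because $\tilde\psi|_{S^{n-2} \times S^1} = \psi$.

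It will then remain to prove $(D^{n+1})_{\tilde K} \cong D^{n+1}$, which I will do by extending $\tilde \psi$ to a diffeomorphism $\hat\psi$ of all of $D^{n-1} \times D^2$. Choose a smooth cutoff function $g \colon [0,1] \to [0,1]$ with $g \equiv 0$ on $[0, \varepsilon]$ and $g(1) = 1$, and set $\hat\psi(y, re^{it}) \defeq (\rho_{g(r) t}(y), re^{it})$ for $r > 0$, together with $\hat\psi(y, 0) \defeq (y, 0)$. Since $\hat\psi|_{D^{n-1} \times S^1} = \tilde\psi$, the self-map of $(D^{n+1})_{\tilde K}$ given by the identity on $W$ and $\hat\psi$ on $D^{n-1} \times D^2$ descends to a well-defined diffeomorphism onto $W \cup_{\mathrm{id}} (D^{n-1} \times D^2) = D^{n+1}$; restricting to the boundary yields the desired diffeomorphism $S^n_K \cong S^n$.

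The main technical obstacle is checking that $\hat\psi$ is genuinely smooth at the core disc $D^{n-1} \times \{0\}$, where $\arg z$ is not even continuous. The cutoff choice $g \equiv 0$ on $[0,\varepsilon]$ is specifically designed to circumvent this: it makes $\hat\psi$ equal to the identity on the neighbourhood $D^{n-1} \times D^2_\varepsilon$ of the core, so the singularity of $\arg$ is never encountered. I remark in passing that this argument does not appear to use the hypothesis $n \geq 5$ anywhere and should go through for any $n \geq 4$.
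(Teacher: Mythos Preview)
Your proposed extension $\hat\psi(y, re^{it}) \defeq (\rho_{g(r)t}(y), re^{it})$ is not well-defined. The argument $t$ of a point $re^{it}\in D^2$ is only determined modulo $2\pi$, so for $\hat\psi$ to make sense you would need $\rho_{g(r)(t+2\pi)}=\rho_{g(r)t}$, i.e.\ $\rho_{2\pi g(r)}=\id$, i.e.\ $g(r)\in\Z$ for every $r$. This is incompatible with a continuous $g\colon[0,1]\to[0,1]$ satisfying $g(0)=0$ and $g(1)=1$. What you are implicitly attempting is to null-homotope the loop $e^{it}\mapsto\rho_t$ in $SO(n-1)$ by rescaling the parameter; but for $n\ge4$ this loop represents the non-trivial element of $\pi_1(SO(n-1))\cong\Z/2$, which is precisely why the Gluck twist is an interesting operation in the first place. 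Your closing remark that the argument ``should go through for any $n\ge4$'' is a further warning sign: for $n=4$ it would resolve the open question of whether Gluck twists along slice $2$-knots are standard.

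The overall strategy of producing a contractible $(n+1)$-dimensional filling of $S^n_K$ is sound, and your manifold $(D^{n+1})_{\tilde K}$ is in fact such a filling; the point is that an explicit identification with $D^{n+1}$ is not available, and one has to invoke the smooth $h$-cobordism theorem (hence the hypothesis $n\ge5$). The paper does exactly this, with a slightly different filling: it attaches an $(n-1)$-handle along $K$ and a $2$-handle along the dual curve with the \emph{opposite} framing, then surgers out an embedded $(n-1)$-sphere built from the slice disc to obtain a homotopy $(n+1)$-ball with boundary $S^n_K$, and concludes via the $h$-cobordism theorem.
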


\begin{remark}\label{rem:slice}
    When $S^{n-1}$ has a unique smooth structure (see \Cref{REM:embeddings}), the assumption on the knot $K$ in \Cref{prop:std} is equivalent to asking~$K$ to be \emph{slice}, \ie to asking $K(S^{n-2}) \subseteq S^n = \partial D^{n+1}$ to bound an $(n-1)$-dimensional smoothly and properly embedded disc in~$D^{n+1}$. For even $n$, all knots in $S^n$ are slice~ \cites{kervaire_french,kervaire:knot_cob}. In particular, for each such knot, there is a reparametrisation of its embedding for which we can apply the proposition.
    For $n \in \{6, 62\}$, which are the even $n \geq 5$ such that $S^{n-1}$ is known to have a unique smooth structure (see \Cref{REM:embeddings}), we obtain that $S^n_K$ is diffeomorphic to~$S^n$ for \emph{every} knot $K$ in~$S^n$.
\end{remark}

\begin{proof}[{Proof of \Cref{prop:std}}]
We will construct an $h$-cobordism from $S^n$ to $S^n_K$. To that end, consider the $(n+1)$-manifold $W$ obtained by first attaching an $(n-1)$-handle to~$D^{n+1}$ along $K$ and then attaching a $2$-handle along the dual curve $\gamma_K$ in the surgery~$S^n(K)$ of $S^n$ along $K$.
Notice that, since $\gamma_K$ is the belt sphere of the $(n-1)$-handle, it naturally inherits a framing. We use the \emph{opposite} framing for the latter handle attachment.\footnote{Here we are using $\pi_1(\operatorname{SO}(n-1))\cong \Z/2\Z$.} 
Standard arguments show that $W$ is a simply connected $(n+1)$-manifold with~$\partial W=S^n_K$, $H_k(W)\cong \Z$ for~$k \in\{ 0,2,n-1\}$ and~$H_k(W)=0$ otherwise. Notice that a generator of $H_{n-1}(W)$ is represented by a smoothly embedded $(n-1)$-sphere $S$ given by the union of the core of the $(n-1)$-handle and a slice disc in $D^{n+1}$ for $K$.
By \Cref{rem:framings}, $S$ has a trivial normal bundle in $W$ and its framing is unique. We now claim that the manifold
\[B\defeq (W \setminus \nu S) \cup_\partial (D^n \times S^1)\]
obtained by surgery on $W$ along $S$ is a homotopy $(n+1)$-ball with boundary~$S^n_K$. Indeed, a Mayer--Vietoris argument shows that $H_k(W\setminus\nu S)\cong\Z$ for $k \in \{0,n-1,n\}$ and all other homology groups are trivial, where generators for $H_{n-1}(W\setminus\nu S) \cong H_{n-1}(W)$ and $H_n(W\setminus\nu S) \cong H_n(\partial \nu S)$ can be represented by a parallel copy of $S$ and by $\partial\nu S$, respectively. Since $B$ is obtained by gluing~$D^n\times S^1$ onto $\partial\nu S$, it is easy to check that all the homology groups $H_k(B)$ for $k > 0$ are trivial. Moreover, by the Seifert--van Kampen theorem, $B$ is simply connected and hence a homotopy $(n+1)$-ball as claimed. By removing a small ball in the interior of~$B$ we obtain an $h$-cobordism between $S^n$ and $S^n_K$, and the conclusion follows from the smooth $h$-cobordism theorem~\cite{smale}.
\end{proof}

\subsection{Surgery diffeomorphisms and traces}\label{subsec:surgerydiffeos}

We now prove \Cref{Glucktwist}, which we restate for the convenience of the reader. 

\begin{reptheorem}{Glucktwist}
    For $n \geq 5$, let $K_1$ and $K_2$ be knots in $S^n$ and let $\varphi \colon S^n(K_1) \to S^n(K_2)$ be an orientation-preserving diffeomorphism between their surgeries. Then either there is an orientation-preserving diffeomorphism $X(K_1) \cong X(K_2)$ or the Gluck twist~$T(K_1)$ is a knot in the standard smooth $n$-sphere with $X(T(K_1)) \cong X(K_2)$.
\end{reptheorem}

\begin{remark}
    Note that \Cref{Glucktwist} does not hold in dimension $n=3$~\cite{manolescu-piccirillo:RBG}, compare also~\cites{Akbulut,Boyer1}. In fact, the proof of \Cref{Glucktwist} is based on the same idea presented in~\cite{Akbulut}. 
\end{remark}

\begin{proof}[{Proof of \Cref{Glucktwist}}]
For $i=1,2$, we denote by $\gamma_i \subseteq S^n(K_i)$ the dual curve of $K_i$ such that the knot~$K_i$ can be recovered by loop surgery on $S^n(K_i)$ along $\gamma_i$ with some framing, which we denote by $f_i$. Since $X(K_2)$ is simply-connected and of dimension~$n+1 \geq 5$, there is a properly embedded $2$-disc $D$ in $X(K_2)$ bounded by~$\varphi(\gamma_1)$. We can write
\[X(K_2)=(X(K_2) \setminus \overset{\circ}{\nu} D)\cup\nu D.\]
Using standard arguments as in \Cref{rem:Gluck_manifold}, one can then check that $V \defeq X(K_2) \setminus \overset{\circ}{\nu} D$ is simply-connected and that its boundary
\(\partial V =\partial(X(K_2) \setminus \overset{\circ}{\nu} D)\) is a homotopy $n$-sphere. More precisely, $\partial V\cong \varphi(S^{n}(K_{1})\setminus \overset{\circ}{\nu}\gamma_{1}) \cup_{\partial} S^{n-2}\times D^{2}$,  is diffeomorphic either to $S^{n}$, when the push-forward via $\varphi$ of $f_{1}$ extends over $D$, or to the Gluck twist $S^{n}_{K_{1}}$, when the push-forward of $f_{1}$ does not extend. Moreover, one can show that $H_i(V) \cong 0$ for $i>0$. This follows from a Mayer-Vietoris argument on the pair $(V, \nu D)$, noticing for $i=n-2$ that $V \cap \nu D \cong D^2 \times S^{n-2}$ is included in $V$ via a map which factors through the homology sphere $\partial V$. Again, by a standard argument, it follows that $V$ is contractible. Hence, by the smooth $h$-cobordism theorem~\cite{smale}, it is a smooth $(n+1)$-ball.

Notice that this last step is where we are using the assumption~$n \geq 5$. Now, note that~$\nu D$ is attached to $V=X(K_2) \setminus \overset{\circ}{\nu} D$ as an $(n-1)$-handle, with attaching locus an~$(n-2)$-knot $A$ in $ \partial V$ which is the core of the loop surgery on $\varphi(S^n(K_1))$ along~$\varphi(\gamma_1)$. See \cref{fig:extending_surgery_diffeo} for a pictorial description of our setting. We will now distinguish two cases.

First, suppose that the push-forward $\varphi_\ast(f_1)$ of the framing $f_1$ on~$\varphi(\gamma_1)$ extends over~$D$. In this case, $A$ is equivalent to the core of the loop surgery on $S^n(K_1)$ along~$\gamma_1$ with framing $f_{1}$. This means that the attaching locus of $\nu D$ is equivalent to $K_1$, which yields an identification
\begin{align}\label{eq:traces_h-cob}
    X(K_2) = V \cup_A \nu D \cong D^{n+1} \cup_{K_1=S^{n-2} \times \{0\}} (D^{n-1} \times D^2) = X(K_1).
\end{align}
On the other hand, it can also happen that the framing $\varphi_*(f_1)$ does not extend over~$D$. Then we use the fact that surgery on $S^n(K_1)$ along $\gamma_1$ with framing not equivalent to~$f_1$ yields the pair $(S^n_{K_1}, T(K_1))$. By the above argument, $S^n_{K_1}$ is diffeomorphic to the boundary of the $(n+1)$-ball $V$, \ie $S^n_{K_1} \cong S^n$, and $A=T(K_1)$. This implies that \[X(K_2) = V \cup_A \nu D \cong D^{n+1} \cup_{T(K_1)=S^{n-2} \times \{0\}} (D^{n-1} \times D^2)=X(T(K_1)).\qedhere \]
\end{proof}

\begin{figure}[htbp] 
	\centering
    \includegraphics[width=0.7 \textwidth]{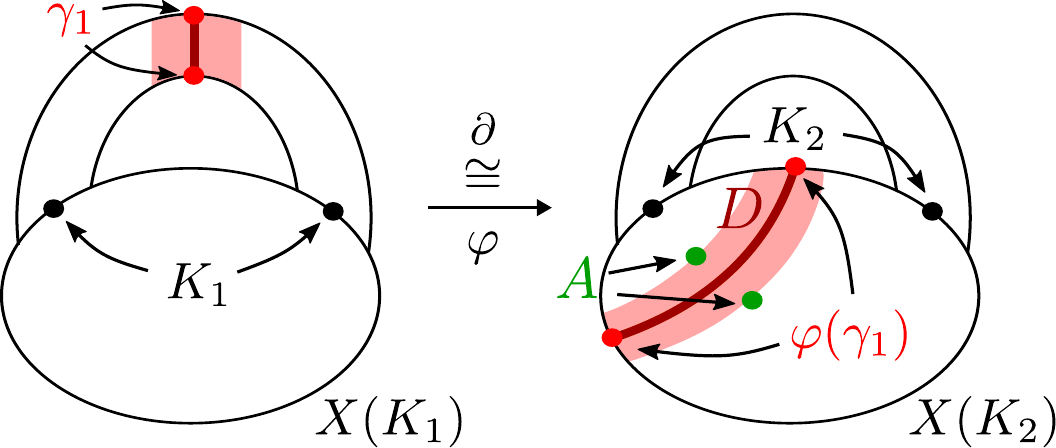}
    \caption{Extending a surgery diffeomorphism to the traces.}
    \label{fig:extending_surgery_diffeo}
\end{figure}

The same proof yields a similar result in the topological category for~$n=4$. The reason why we do not obtain a statement in the smooth setting for $n=4$ is the failure of the smooth $h$-cobordism theorem in this dimension, which follows from the work of Donaldson~\cite{Donaldson}.

Before stating our result in the topological category for $n=4$, let us remark that the Gluck twist~$T(K)$ of a knot $K$ is a smooth $2$-knot in the possibly exotic $4$-sphere~$S^4_{K}$ (see \Cref{rem:Gluck_manifold}). For such knots we define the \textit{trace} $X(T(K))$ of $T(K)$ by attaching a $3$-handle along $T(K)\times \{1\}$ to $S^4_{K}\times [0,1]$ and then gluing $D^4$ to the resulting manifold via a homeomorphism between $\partial D^4$ and $S^4_{K}\times \{0\}$. It follows from the Alexander trick that this manifold is well-defined up to orientation-preserving homeomorphism.

\begin{theorem}\label{Glucktwisttop}
    Let $K_1$ and $K_2$ be $2$-knots in $S^4$ and let $\varphi \colon S^{4}(K_{1}) \to S^{4}(K_{2})$ be an orientation-preserving diffeomorphism between their surgeries. Then either there is an orientation-preserving diffeomorphism $X(K_1) \cong X(K_2)$ or there is an orientation-preserving homeomorphism $X(T(K_1)) \approx X(K_2)$.     If the Gluck twist $S^4_{K_1}$ is a standard smooth $4$-sphere, then we can conclude (as in the case $n \geq 5$) that either $X(K_1) \cong X(K_2)$ or $X(T(K_1))\cong X(K_2)$.
\end{theorem}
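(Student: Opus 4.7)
The plan is to adapt the proof of \Cref{Glucktwist} to the five-dimensional setting, where the smooth $h$-cobordism theorem is no longer available.

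First, let $\gamma_1 \subseteq S^{4}(K_1)$ denote the dual curve of $K_1$ with its natural framing $f_1$. Since $X(K_2)$ is simply connected of dimension $5$, the loop $\varphi(\gamma_1) \subseteq \partial X(K_2)$ bounds a properly embedded $2$-disc $D$ in $X(K_2)$. Setting $V \defeq X(K_2) \setminus \overset{\circ}{\nu} D$, the homology and van Kampen computations from the proof of \Cref{Glucktwist} go through verbatim and show that $V$ is a contractible smooth $5$-manifold whose boundary is a smooth homotopy $4$-sphere: diffeomorphic to $S^4$ when the push-forward $\varphi_*(f_1)$ extends over $D$ (Case~1), and to $S^4_{K_1}$ otherwise (Case~2).

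The crucial new input is the identification of $V$ with a $5$-ball, for which we cannot invoke the smooth $h$-cobordism theorem. Instead we argue as follows. Whenever $\partial V$ is diffeomorphic to the standard $S^4$---which always holds in Case~1, and holds in Case~2 under the additional assumption that $S^4_{K_1}$ is standard---we glue $V$ to a standard $D^5$ along their common boundary $S^4$. A Mayer--Vietoris argument shows the result is a smooth homotopy $5$-sphere, which is diffeomorphic to $S^5$ by the vanishing of $\Theta_5$ (Kervaire--Milnor), so that $V \cong D^5$ smoothly. In the remaining case, where $S^4_{K_1}$ may be exotic, Freedman's topological Poincaré theorem gives $\partial V \approx S^4$; gluing $V$ topologically to $D^5$ produces a topological homotopy $5$-sphere, which is $\approx S^5$ by the topological Poincaré conjecture in dimension $5$ (the Kirby--Siebenmann obstruction on a homotopy $5$-sphere vanishes, reducing us once more to $\Theta_5 = 0$). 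Hence $V \approx D^5$ topologically.

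With $V$ recognised as a (smooth or topological) $5$-ball, the rest of the proof proceeds exactly as in \Cref{Glucktwist}. In Case~1 the attaching sphere of $\nu D$ corresponds to $K_1 \subseteq \partial V \cong S^4$, producing an orientation-preserving diffeomorphism $X(K_2) \cong X(K_1)$. In Case~2 the attaching sphere corresponds to $T(K_1) \subseteq S^4_{K_1} = \partial V$, yielding an orientation-preserving homeomorphism $X(K_2) \approx X(T(K_1))$ in general, upgraded to a diffeomorphism when $S^4_{K_1}$ is standard. The principal obstacle throughout is the failure of the smooth $h$-cobordism theorem in dimension~$5$, which we circumvent by combining the smooth Poincaré conjecture in dimension $5$ with Freedman's theorem in dimension~$4$; careful bookkeeping of smooth versus topological structures on $V$ and $\partial V$ then delivers the dichotomy in the theorem statement.
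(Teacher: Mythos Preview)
Your proof is correct and follows essentially the same route as the paper's. The paper cites \cite{kosinski:differential_manifolds}*{Chapter~VIII, Corollary~4.7} to conclude that a contractible smooth $5$-manifold with standard $S^4$ boundary is a smooth $D^5$, whereas you unpack this into the underlying argument via $\Theta_5=0$; the two are the same in content, and your handling of the topological case (Freedman plus the topological Poincar\'e conjecture in dimension~$5$) matches what the paper asserts without detail.
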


\begin{proof}
The proof is exactly the same as in the case $n \geq 5$. In particular, if the push-forward $\varphi_*(f_1)$ of the framing $f_1$ of $\gamma_{K_1}$ extends over $D$, then we can still conclude that~$V$ is a smooth $5$-disc since $\partial V$ is a standard smooth $4$-sphere and we can hence apply \cite{kosinski:differential_manifolds}*{Chapter~VIII, Corollary~4.7}. If $\varphi_*(f_1)$ does not extend over~$D$, then we can only conclude that $V$ is a topological $5$-disc, and hence there is a homeomorphism $X(T(K_1)) \approx X(K_2)$. However, since~$\partial V$ is identified with the Gluck twist $S^4_{K_1}$, in the case in which $S^4_{K_1}$ is a standard $S^4$ we can again apply~\cite{kosinski:differential_manifolds}*{Chapter~VIII, Corollary~4.7} to conclude that~$V$ is a standard smooth $5$-disc and find a diffeomorphism $X(T(K_1)) \cong X(K_2)$. 
\end{proof}

\begin{remark}\label{rmk:extension}
  In the cases in which $S^{n+1}$ has a unique smooth structure (see \Cref{REM:embeddings}), the diffeomorphisms $X(K_1) \cong X(K_2)$ and $X(T(K_1))\cong X(K_2)$ can be constructed as smooth extensions of the map $\varphi\colon S^n(K_1) \to S^n(K_2)$ in \Cref{Glucktwist,Glucktwisttop}. This follows from the fact that one can extend the identification induced by~$\varphi$ of $(S^{n},K_{1} ) $ or $ (S^{n},T(K_{1}) )$ with $(\partial V,A)$ by noticing that every self-diffeomorphism of the $n$-sphere extends to the $(n+1)$-ball in this case; see the next lemma. Note that any self-homeomorphism of the $n$-sphere extends to the $(n+1)$-ball by the Alexander trick. Therefore we can always extend $\varphi$ to a homeomorphism~$X(K_1) \approx X(K_2)$ or~$X(T(K_1))\approx X(K_2)$.
\end{remark}

\begin{lemma}[{See \eg \cite{kosinski:differential_manifolds}*{Chapter~VIII, Corollary~5.6}.}]
    If the $(n+1)$-dimensional smooth Poincaré conjecture is true, then any self-diffeomorphism of the $n$-sphere extends to a diffeomorphism  of the $(n+1)$-disc. \qed
\end{lemma}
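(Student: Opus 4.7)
Given a self-diffeomorphism $\varphi\colon S^n \to S^n$, I would form the \emph{twisted sphere} $\Sigma_\varphi := D_1^{n+1}\cup_\varphi D_2^{n+1}$, obtained by gluing two copies of the standard $(n+1)$-disc along their boundaries via $\varphi$. A direct Mayer--Vietoris and Seifert--van Kampen computation shows that $\Sigma_\varphi$ is a closed, simply connected smooth $(n+1)$-manifold with the integral homology of $S^{n+1}$, hence a smooth homotopy $(n+1)$-sphere. Under the assumption of the smooth Poincar\'e conjecture in dimension $n+1$, there is then a diffeomorphism $h\colon \Sigma_\varphi \to S^{n+1}$.

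The next step would be to normalise $h$ so that its restriction to $D_1^{n+1}$ is standard. Applying the smooth disc theorem of Palais~\cite{palais}, the embedding $h|_{D_1^{n+1}}\colon D^{n+1}\hookrightarrow S^{n+1}$ is smoothly ambient isotopic to the inclusion of the upper hemisphere $D_+$, and after composing $h$ with such an ambient isotopy we may assume $h|_{D_1^{n+1}}$ equals the standard inclusion $D^{n+1}\cong D_+$. Then $h|_{D_2^{n+1}}$ is a diffeomorphism onto the lower hemisphere $D_-$, and post-composing it with the canonical identification $D_-\cong D^{n+1}$ (the one that is the identity on the equator $S^n$) yields a self-diffeomorphism $\Phi\colon D^{n+1}\to D^{n+1}$.

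It remains to compute the boundary restriction of $\Phi$. Denote by $\iota_j\colon D_j^{n+1}\hookrightarrow \Sigma_\varphi$ the natural inclusions, so that $\iota_1(x)=\iota_2(\varphi(x))$ for every $x\in S^n$. For $y\in\partial D_2^{n+1}$, writing $y=\varphi(\varphi^{-1}(y))$ gives $\iota_2(y)=\iota_1(\varphi^{-1}(y))$, so $h\circ \iota_2|_{S^n}=\varphi^{-1}$ under the identifications above. Hence $\Phi|_{S^n}=\varphi^{-1}$, and the required extension of $\varphi$ is $\Phi^{-1}$. The main delicate point in the argument is the bookkeeping of the gluing identifications (and of orientations, which is automatic once one knows the boundary restriction) needed to pass from the abstract diffeomorphism $h$ to the concrete extension $\Phi$; the existence of $h$ itself is the substantive input, and it is precisely where the smooth Poincar\'e conjecture is used.
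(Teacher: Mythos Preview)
Your argument is correct and is essentially the classical proof found in Kosinski's book (and elsewhere): build the twisted sphere $\Sigma_\varphi$, invoke the smooth Poincar\'e conjecture to identify it with $S^{n+1}$, normalise one hemisphere via the disc theorem, and read off the extension from the other hemisphere. The paper itself does not give a proof; it simply records the statement with a citation to \cite{kosinski:differential_manifolds}*{Chapter~VIII, Corollary~5.6} and a \verb|\qed|. So there is nothing to compare against beyond noting that you have supplied the standard argument the paper chose to outsource.

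One small point worth making explicit: Palais's disc theorem requires the two disc embeddings to agree in orientation, so strictly speaking your normalisation step uses that $\varphi$ is orientation-preserving (so that $h$ can be taken orientation-preserving and $h|_{D_1}$ lands in the correct isotopy class). The orientation-reversing case reduces to this one by writing $\varphi = r\circ\psi$ with $r$ a reflection of $S^n$, which extends over $D^{n+1}$ trivially. You allude to this in your final sentence, but it would be cleaner to say so outright rather than folding it into ``bookkeeping''.
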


\subsection{Extending surgery diffeomorphisms}

By \cref{Glucktwisttop} and \cref{rmk:extension}, any surgery diffeomorphism $\varphi \colon S^4(K_1) \to S^4(K_2)$ for $2$-knots~$K_1$ and $K_2$ in $S^4$ extends either to a trace diffeomorphism $X(K_1)\to X(K_2)$ or to a homeomorphism $X(T(K_1))\to X(K_2)$. The following theorem provides the complete answer as to which of these two cases occurs, depending on the algebraic topology of the traces.

\begin{reptheorem}{extension_intro}
    Let $K_1$ and $K_2$ be $2$-knots in $S^4$. An orientation-preserving surgery diffeomorphism \(\varphi\colon S^4(K_1) \to S^4(K_2)\) extends to an orientation-preserving trace diffeomorphism \(\Phi\colon X(K_1) \to X(K_2)\) if and only if the closed \(5\)-manifold \(X(K_1) \cup_{\varphi} - X(K_2)\) is spin.
\end{reptheorem}

Both the trace and the surgery along a 2-knot $K$ are spin, since $$H^2(X(K);\Z/2)=0=H^2(S^4(K);\Z/2).$$ Recall that isomorphism classes of spin structures on a CW complex $X$ are in bijection with $H^1(X;\Z/2)$ (see e.g.~\cite{gompf-stipsicz:book}*{Section~1.4}). Since $H^1(X(K_i);\Z/2)=0$ for each $i \in \{1,2\}$, there is a unique spin structure $\mathfrak{s}_i$ on $S^4(K_i)$, up to isomorphism, which extends over $X(K_i)$. Therefore, \Cref{extension_intro} says that an orientation-preserving surgery diffeomorphism~$\varphi\colon S^4(K_1)\to S^4(K_2)$ extends to an orientation-preserving trace diffeomorphism $\Phi\colon X(K_1)\to X(K_2)$ if and only if~$\varphi^*(\mathfrak{s}_1)=\mathfrak{s}_2$.

\begin{remark}\label{rem:sth}
    \Cref{extension_intro} is analogous to the corresponding result for \(1\)-knots referred to in \cite{manolescu-piccirillo:RBG}*{Theorem 3.7}, which is deduced as a special case of Boyer's classification of simply-connected \(4\)-manifolds with boundary \cite{Boyer1}. The aforementioned result concerns the extension of homeomorphisms of the $0$-surgeries of $1$-knots to homeomorphisms of the $0$-framed traces. In the case of $2$-knots, however, we prove a statement in the smooth category. On the other hand, we note that \Cref{Glucktwist,extension_intro} also hold true in the topological category, \ie for locally flat knots in $S^n$. More care must be taken in this category when defining and working with all the relevant terms. We will not do this here, but just refer the reader to the excellent survey by Friedl--Nagel--Orson--Powell~\cite{friedl2024surveyfoundationsfourmanifoldtheory}.
\end{remark}

\begin{proof}[Proof of \cref{extension_intro}]
    We follow the proof indicated by \cite{manolescu-piccirillo:RBG}*{Theorem 3.7}, adapting it to one dimension up, and using Barden's classification of simply-connected \(5\)-manifolds~\cite{BardenClassify}.

    The ``only if'' argument is the same as in the classical case. Assume that \(\varphi\) extends to a diffeomorphism \(\Phi\). From \(\Phi\) we can build a diffeomorphism \[F\colon X(K_1) \cup_{\id}-X(K_1) \to X(K_1) \cup_{\varphi} - X(K_2).\] 
    Since \(X(K_1)\) is spin, also $X(K_1) \cup_{\id}-X(K_1)$ is spin, and thus \(X(K_1) \cup_{\varphi} - X(K_2)\) must be spin as well.

    For the ``if'' direction, we wish to use the assumption that \(Z \defeq X(K_1) \cup_{\varphi} - X(K_2)\) is spin to construct an $h$-cobordism between the traces. First, notice that \(Z\) is a closed, simply-connected \(5\)-manifold. Furthermore, \(Z\) is spin and has \(H_2(Z) \cong \Z\), so by the classification of simply-connected \(5\)-manifolds \cite{BardenClassify}*{Theorem 2.3} we obtain a diffeomorphism \(Z\cong S^2\times S^3\). Thus \(Z\) bounds a \(6\)-manifold \(W\) diffeomorphic to~$D^3 \times S^3$, which provides us with a cobordism between \(X(K_1)\) and \(X(K_2)\). We will show that it is an $h$-cobordism. Indeed, by the Hurewicz and Whitehead theorems, it suffices to show that the inclusions \(X(K_i) \to W\) induce isomorphisms on each homology group. Observe that the only homology groups we need to consider are $H_3(X(K_i))$, $i \in \{1,2\}$, and~$H_3(W)$, which are all isomorphic to $\Z$. Indeed, the inclusions $X(K_i) \hookrightarrow Z$ and $Z \hookrightarrow W$ induce isomorphisms on the level of $H_3$. Hence~\(W\) gives rise to an $h$-cobordism. By the \(6\)-dimensional $h$-cobordism theorem~\cite{smale}, we obtain a diffeomorphism \(X(K_1) \to X(K_2)\) extending~\(\varphi\).
\end{proof}

\begin{remark}\label{rem:extension_higher}
    For the proof of \Cref{extension_intro}, we used Barden's classification of simply-connected $5$-manifolds~\cite{BardenClassify}. We record the following alternative direct approach, which could potentially be used to generalise \Cref{extension_intro} to all dimensions~$n \geq 4$. First, use Whitehead's theorem and the fact that $Z=X(K_1) \cup_{\varphi} - X(K_2)$ is spin to construct a homotopy equivalence of $Z$ to~$S^2\times S^{n-1}$. Then try to use surgery theory of simply-connected manifolds to upgrade this homotopy equivalence to a diffeomorphism. The rest of the proof would proceed exactly as the proof of \Cref{extension_intro}. 
    
    The obstruction to this technique lies in the surgery step. The structure set of \(S^2\times S^{n-1}\) for \(n\geq 5\) has been determined in both the smooth \cite{crowley2010smoothstructuresetsp} and topological category \cite{LuckMackoSurgery}*{Theorem~\(18.11\)}. These structure sets depend on~\(n\), and in the smooth category can be complicated to compute. Topologically, the situation is slightly more manageable. In particular, when \(n\) is even, \(\mathcal{S}^{\text{TOP}}(S^2\times S^{n-1}) \cong \Z_2\), and one can modify the homotopy equivalence $Z \to S^2\times S^{n-1}$ with a Novikov pinch (see for example \cites{CochranHabegger}) to obtain a homeomorphism. The topological structure set is more complicated for other values of \(n\), where this method of altering the homotopy equivalence is ineffective. In the smooth category, it is also difficult to apply this pinching, since one has to factor in the existence of exotic spheres.
\end{remark}

We observe that \Cref{extension_intro} implies the following corollary, which is in sharp contrast to the situation in dimension $n=3$ (see for example~\cites{Akbulut1,Akbulut2,Akbulut3}).

\begin{corollary}\label{cor:exotic_traces}
    Let $K_1$ and $K_2$ be $2$-knots in $S^4$. If there exists an orientation-preserving trace homeomorphism $X(K_1) \to X(K_2)$ that restricts to a diffeomorphism of the surgeries, then the traces $X(K_1)$ and $X(K_2)$ are orientation-preservingly diffeomorphic.    
\end{corollary}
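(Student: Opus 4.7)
The plan is to apply \Cref{extension_intro}. Let $\Phi\colon X(K_1) \to X(K_2)$ denote the given orientation-preserving trace homeomorphism, and write $\varphi \defeq \Phi|_{S^4(K_1)}$ for its boundary restriction, which is an orientation-preserving diffeomorphism by hypothesis. It suffices to show that the closed smooth $5$-manifold $Z \defeq X(K_1) \cup_\varphi -X(K_2)$ is spin, since then \Cref{extension_intro} produces the desired orientation-preserving smooth diffeomorphism $X(K_1) \cong X(K_2)$.

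First I would use $\Phi$ to exhibit $Z$ as homeomorphic to the double of a single trace. Define
\[
H\colon X(K_1) \cup_{\id} -X(K_1) \longrightarrow Z
\]
to be the identity on the first summand $X(K_1)$ and $\Phi$ on the second summand $-X(K_1) \to -X(K_2)$. The agreement on the common boundary $S^4(K_1)$ is precisely the condition $\Phi|_{\partial} = \varphi$, so $H$ is a well-defined orientation-preserving homeomorphism. The domain $Y_0 \defeq X(K_1) \cup_{\id} -X(K_1)$ is (after smoothing corners) the boundary of the smooth product $X(K_1) \times [0,1]$, hence inherits a spin structure from $X(K_1) \times [0,1]$, which is spin because $X(K_1)$ is.

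Next, although $\Phi$ is only a homeomorphism, the spin condition $w_2 = 0$ is a homotopy invariant of closed manifolds. This follows from Wu's formula, which expresses $w_2$ in terms of the Steenrod squares acting on cohomology together with the fundamental class under Poincar\'e duality, all of which are homotopy invariants. Since $H$ identifies $Z$ and $Y_0$ as homotopy equivalent closed $5$-manifolds, the vanishing $w_2(Y_0) = 0$ forces $w_2(Z) = 0$. Hence $Z$ is spin as a smooth manifold, and \Cref{extension_intro} yields the desired orientation-preserving diffeomorphism $X(K_1) \cong X(K_2)$.

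The main conceptual point is the passage from a topological hypothesis to a smooth conclusion via the homotopy invariance of $w_2$; everything else, namely the construction of $H$ and the existence of a spin structure on the double of a spin manifold, is purely formal and I do not expect it to pose any serious obstacle.
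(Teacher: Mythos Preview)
Your argument is correct and follows the same route as the paper's proof: build the map from the double $X(K_1)\cup_{\id}-X(K_1)$ to $Z$ using $\Phi$, observe the double is spin, and feed this into \Cref{extension_intro}. The paper simply says ``by the same argument as in the proof of \Cref{extension_intro}'' for the spin step, whereas you make explicit the one point that genuinely needs comment---namely that the gluing map $H$ is only a homeomorphism here, so one must invoke Wu's formula (homotopy invariance of $w_2$) rather than mere naturality of Stiefel--Whitney classes under diffeomorphisms. That extra care is appropriate and does not change the overall strategy.
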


\begin{proof}
    Let $\Phi\colon X(K_1) \to X(K_2)$ be a homeomorphism such that its restriction to the surgeries $\varphi\colon S^4(K_1) \to S^4(K_2)$ is a diffeomorphism. Then we can build the smooth $5$-manifold $X(K_1) \cup_{\varphi} - X(K_2)$. By the same argument as in the proof of \Cref{extension_intro}, this manifold is spin and thus $\varphi$ extends to a diffeomorphism of the traces by \Cref{extension_intro}. Note that this does not imply that the original map $\Phi$ is a diffeomorphism.
\end{proof}

\begin{remark}\label{rem:extension}
The ``only if'' direction of \Cref{extension_intro} is true more generally for any $n \geq 3$, \ie given knots $K_1, K_2$ in $S^n$, if a surgery diffeomorphism \(\varphi\colon S^n(K_1) \to S^n(K_2)\) extends to a trace diffeomorphism $X(K_1) \to X(K_2)$, then the closed $(n+1)$-manifold \(X(K_1) \cup_{\varphi} - X(K_2)\) is spin. For $n=3$, this was proven in \cite{manolescu-piccirillo:RBG}, whereas for~$n \geq5$ the proof works exactly the same as the one for $n=4$ given in the proof of \Cref{extension_intro}. We believe that the ``if'' direction of \Cref{extension_intro} is true in all dimensions, too, and thus also \Cref{cor:exotic_traces}; see Question~\ref{ques:extensions} and the discussion there. 
\end{remark}

The following criterion is a direct consequence of \Cref{Glucktwisttop,extension_intro} and provides a potential tool for distinguishing the isotopy class of a $2$-knot $K$ in $S^4$ from that of its Gluck twist $T(K)$.

\begin{corollary}\label{criterion}
    Let $K$ be a $2$-knot in $S^4$ such that the Gluck twist $S^4_K$ is a standard smooth~$S^4$. Then $X(K) \not\cong X(T(K))$ if and only if for every orientation-preserving diffeomorphism $\varphi \colon S^4(K) \to S^4(T(K))$ the manifold $X(K) \cup_\varphi -X(T(K))$ is not spin.\qed
\end{corollary}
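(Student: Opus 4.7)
The plan is to deduce the corollary by combining \Cref{Glucktwisttop,extension_intro}, which already do all the heavy lifting. The only thing to be careful about is that \Cref{extension_intro} is phrased for $2$-knots in the standard $S^4$, so we must invoke the assumption that $S^4_K$ is standard at the outset. Under that hypothesis, $T(K)$ is a genuine $2$-knot in $S^4$, and since Gluck twisting preserves the exterior of the knot, there is a canonical orientation-preserving diffeomorphism $S^4_K(T(K))\cong S^4(K)$, which gives $S^4(T(K))\cong S^4(K)$ and thus exhibits at least one orientation-preserving surgery diffeomorphism $\varphi\colon S^4(K)\to S^4(T(K))$. In particular, the universal quantifier ``for every $\varphi$'' in the statement is non-vacuous.

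For the backward implication (contrapositive of ``$\Rightarrow$''), I would suppose that some orientation-preserving $\varphi\colon S^4(K)\to S^4(T(K))$ satisfies that the closed $5$-manifold $X(K)\cup_\varphi -X(T(K))$ is spin. Since $T(K)$ lives in the standard $S^4$ by hypothesis, \Cref{extension_intro} applies verbatim with $K_1=K$ and $K_2=T(K)$ and yields an orientation-preserving trace diffeomorphism $\Phi\colon X(K)\to X(T(K))$ extending $\varphi$, so in particular $X(K)\cong X(T(K))$.

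For the forward implication (again contrapositive), I would start from a trace diffeomorphism $\Phi\colon X(K)\to X(T(K))$ and take $\varphi\defeq\Phi|_{\partial}\colon S^4(K)\to S^4(T(K))$. The map $\id_{X(K)}\cup \Phi^{-1}$ then produces an orientation-preserving diffeomorphism
\[
X(K)\cup_{\varphi} -X(T(K)) \;\cong\; X(K)\cup_{\id} -X(K),
\]
and the right-hand side is the double of $X(K)$. Since $H^2(X(K);\Z/2)=0$ (as already remarked after \Cref{extension_intro}), $X(K)$ is spin, hence so is its double. Thus for this particular $\varphi$ the manifold $X(K)\cup_\varphi -X(T(K))$ is spin, which contradicts the ``for every $\varphi$'' hypothesis and completes the proof.

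There is no genuine obstacle here; the main subtlety is merely to remember to use the standardness of $S^4_K$ in order to legitimately feed $T(K)$ into \Cref{extension_intro}, and to observe that the double of a spin manifold is spin so that one direction can be dispatched by choosing $\varphi$ to be the restriction of the given trace diffeomorphism.
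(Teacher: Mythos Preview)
Your proof is correct and matches the paper's intended approach: the paper omits the proof entirely (just a \qed after citing \Cref{Glucktwisttop,extension_intro}), and you have correctly filled in the details by applying both directions of \Cref{extension_intro} with $K_1=K$ and $K_2=T(K)$, using the standardness hypothesis on $S^4_K$ to ensure $T(K)$ is a bona fide $2$-knot in $S^4$. Your observation that the argument really only needs \Cref{extension_intro} (with \Cref{Glucktwisttop} and the exterior-preservation of Gluck twisting serving only to guarantee that at least one surgery diffeomorphism exists, so the universal quantifier is non-vacuous) is accurate.
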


\section{Infinitely many knots with the same trace in dimensions \texorpdfstring{$\geq 5$}{greater than or equal to 5}}\label{sec:inf_family}

In this section, we prove \cref{infinite_example_intro}, restated below, which demonstrates for $n \geq 5$ the existence of infinitely many non-isotopic knots in $S^n$ with orientation-preservingly diffeomorphic traces. This will be deduced by combining Plotnick's work in \cite{Plotnick1983InfinitelyMD} with \Cref{Glucktwist}. 

\begin{reptheorem}{infinite_example_intro}
    Let $n \geq 5$. There is an infinite collection $\{X_i\}_{i \geq 1}$ of pairwise non-diffeomorphic smooth $(n+1)$-manifolds such that, for each integer~$i \geq 1$, there exists an infinite collection~$\{K_j^i \}_{j \geq 1}$ of pairwise non-isotopic knots in $S^n$ whose trace is orientation-preservingly diffeomorphic to $X_i$, \ie
    \begin{align*}
        X(K_j^i) \cong X_i \qquad \text{for all integers } \quad i,j \geq 1.
    \end{align*}
    In particular, for each $i \geq 1$, this results in an infinite family of pairwise non-isotopic knots in $S^n$ with orientation-preservingly diffeomorphic traces. 
\end{reptheorem}

\begin{proof}
Let $n \geq 5$. In \cite{Plotnick1983InfinitelyMD}*{Theorem~1}, Plotnick shows, for $n \geq 4$, the existence of infinitely many distinct smoothly embedded discs $D^{n-1}$ in $D^{n+1}$ with the same exterior. \Cref{infinite_example_intro} will follow from looking at the boundaries of Plotnick's construction, paying particular attention to framings. 
There are infinitely many Brieskorn spheres~$\Sigma(p,q,r)$ which bound a contractible smooth $4$-manifold; see \eg \cite{MR634760}. For each such Brieskorn sphere, Plotnick produces an $(n+1)$-manifold $X(p,q,r)$ and an infinite family of pairwise inequivalent (and thus non-isotopic) knots in $S^n$ having~$X(p,q,r)$ as their trace. The details are as follows.
 
Let $\Sigma(p,q,r)$ be a Brieskorn sphere which bounds a contractible smooth $4$-manifold~$W$. For $n \geq 5$, we define the $n$-manifold \[V\defeq (W\times S^{n-4}) \cup h_{n-3}(\{\text{pt}\} \times S^{n-4}),\]
where the $(n-3)$-handle $h_{n-3}$ is attached to $W \times S^{n-4}$ along $\{\text{pt}\} \times S^{n-4}$. For~$n=4$, we define $V \coloneqq W$. We obtain, for every $n \ge 4$, a contractible $n$-manifold~$V$, whose boundary $\Sigma^{n-1} \defeq \partial V$ is a homology $(n-1)$-sphere with fundamental group $\pi_1(\Sigma^{n-1}) \cong \pi_1(\partial W) \cong \pi_1(\Sigma(p,q,r))$. 

In ~\cite{Plotnick1983InfinitelyMD}, Plotnick shows that there exists an infinite collection $A^{p,q,r}$ of normal generators of $\pi_1(\Sigma^{n-1})$ that are \emph{algebraically distinct modulo the center $Z(\pi_1(\Sigma^{n-1}))$}. 
This means that, for any pair of distinct elements $\alpha,\beta \in A^{p,q,r}$, there is no automorphism of~$\pi_1(\Sigma^{n-1})$ sending $\alpha$ to $(\beta c)^{\pm 1}$, for any $c\in Z(\pi_1(\Sigma^{n-1}))$. Set $t\defeq [\{\text{pt}\} \times S^1] \in \pi_1(\Sigma^{n-1} \times S^1)$ and perform loop surgeries on $\Sigma^{n-1} \times S^1$, with either of the two possible framings, along simple loops representing the elements $$\{\alpha t \in \pi_1(\Sigma^{n-1} \times S^1) \mid \alpha \in A^{p,q,r}\}.$$ Note that the results of these loop surgeries on $\Sigma^{n-1} \times S^1$ correspond to the boundaries of the manifolds obtained by gluing a $2$-handle $h_2^\alpha = D^2 \times D^{n-1}$ to~$V \times S^1$ along the simple loops representing $\alpha t$. Using the smooth $h$-cobordism theorem, Plotnick shows that the manifolds obtained by loop surgery are always smooth $n$-spheres. The cores $\{\operatorname{pt}\}\times S^{n-2}\subseteq D^2 \times \partial D^{n-1}$ of the surgeries thus give an infinite family of knots 
\begin{align*}
    \mathcal{K}_{p,q,r}=\{K_{\alpha}\colon S^{n-2}\hookrightarrow S^n \}_{\alpha \in A^{p,q,r}}
\end{align*} 
which are pairwise inequivalent, and in particular non-isotopic
by \Cref{lem:equivalence_implies_iso}. In fact, any equivalence between $K_{\alpha}$ and $K_{\beta}$ would induce an automorphism of the group~$\pi_1(\Sigma^{n-1}\times S^1)$ sending $\alpha t$ to $\beta t$ (up to conjugacy). Such an automorphism produces, by restriction, an automorphism of~$\pi_1(\Sigma^{n-1})$ sending $\alpha$ to $\beta c$, for some central element $c$, thus contradicting the assumptions on $A^{p,q,r}$. Here, we are using that~$\pi_1(\Sigma^{n-1})$ is preserved by any automorphism of $\pi_1(\Sigma^{n-1}\times S^1)$, the former being the commutator subgroup of the latter\footnote{Recall that $\Sigma^{n-1}$ is a homology sphere.}, and that any automorphism of~$\pi_1(\Sigma^{n-1}\times S^1)$ that induces the identity on the abelianisation must send $t$ to $ct$ for some central element $c\in \pi_1(\Sigma^{n-1})$. By construction, the surgery of any of these knots is orientation-preservingly diffeomorphic to~$\Sigma^{n-1} \times S^1$. Since performing surgery on a loop with distinct framings yields knots that are related by a Gluck twist, \Cref{Glucktwist} implies that the collection $\mathcal{K}_{p,q,r}$ can be built in such a way that all of its elements have diffeomorphic traces, by choosing in the above construction the correct framing of the simple loop that represents~$\alpha t$. We denote this shared trace by $X(p,q,r)$.

Finally, note that the trace $X(p,q,r)$ of the knots in $\mathcal{K}_{p,q,r}$ is not diffeomorphic to the trace $X(p^\prime,q^\prime,r^\prime)$ of the knots in the family $\mathcal{K}_{p^\prime,q^\prime,r^\prime}$ which is constructed analogously but using another Brieskorn sphere $\Sigma(p^\prime,q^\prime,r^\prime)$ as a starting point. Indeed, whenever we have $\{p,q,r\} \neq \{p^\prime, q^\prime, r^\prime\}$, the traces $X(p,q,r)$ and $X(p^\prime,q^\prime,r^\prime)$ can be distinguished by the fundamental groups of their boundaries, namely~$\pi_1(\Sigma(p,q,r)) \times \Z$ and $\pi_1(\Sigma(p^\prime,q^\prime,r^\prime)) \times \Z$, respectively. In fact, the commutators of these groups are~$\pi_1(\Sigma(p,q,r))$ and $\pi_1(\Sigma(p^\prime,q^\prime,r^\prime))$, respectively, and they differ by the discussion at the end of Section 3 in \cite{milnor19753}, where the fundamental group of the Brieskorn sphere is the group denoted by $[\Gamma,\Gamma]$. 
\end{proof}

\begin{remark}
    Plotnick's construction \cite{Plotnick1983InfinitelyMD} produces infinite families of \emph{slice} knots in $S^n$ which are pairwise inequivalent. Indeed, gluing the $2$-handle~$h_2^{\alpha}$ to~$V \times S^1$ along a representative of $\alpha t$ yields a contractible $(n+1)$-manifold bounded by a homotopy $n$-sphere. By the smooth $h$-cobordism theorem, this is a smooth $(n+1)$-disc. A slice disc for the knot $K_{\alpha}$ is hence given by the cocore of $h^2_{\alpha}$. In particular, such slice discs correspond to the `knotted ball pairs' constructed by Plotnick in~\cite{Plotnick1983InfinitelyMD}*{Theorem~1}.
\end{remark}

A similar argument yields \Cref{infinite_example_intro} in the topological category for $n=4$, again following Plotnick's work. 

\begin{theorem}\label{Plot4}
 There is an infinite collection $\{X_i\}_{i \geq 1}$ of pairwise non-homeomorphic topological $5$-manifolds such that, for each integer $i \geq 1$, there exists an infinite collection $\{K_j^i \}_{j \geq 1}$ of pairwise non-isotopic locally flat $2$-knots in $S^4$ whose trace is orientation-preservingly homeomorphic to $X_i$, \ie
    \begin{align*}
        X(K_j^i) \approx X_i \qquad \text{for all integers} \quad i,j \geq 1.
    \end{align*}
    In particular, for each $i \geq 1$, this results in an infinite family of pairwise non-isotopic locally flat $2$-knots in $S^4$ with orientation-preservingly homeomorphic traces. 
\end{theorem}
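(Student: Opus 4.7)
The plan is to mirror the proof of \Cref{infinite_example_intro} in ambient dimension $n=4$, replacing smooth arguments by topological ones where the smooth $h$-cobordism theorem in dimension $5$ would have been used. Throughout, one works with locally flat embeddings, and invokes Freedman's topological surgery machinery for simply-connected $5$-manifolds.

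The first step is to set up Plotnick's construction directly for $n=4$. Given a Brieskorn homology $3$-sphere $\Sigma(p,q,r)$ bounding a contractible smooth $4$-manifold $W$, set $V\defeq W$, so $\partial V=\Sigma(p,q,r)$, and consider the infinite set $A^{p,q,r}\subseteq\pi_1(\Sigma(p,q,r))$ of normal generators that are pairwise algebraically distinct modulo the center. For each $\alpha\in A^{p,q,r}$, represent $\alpha t\in\pi_1(\partial V\times S^1)$ by an embedded loop $\gamma_\alpha$ and perform loop surgery on $\partial V\times S^1$ along $\gamma_\alpha$ with a chosen framing. The result bounds the $5$-manifold obtained by attaching a $2$-handle to $V\times S^1$ along $\gamma_\alpha$; this is contractible by Plotnick's computation, so its boundary is a homotopy $4$-sphere, hence a topological $S^4$ by Freedman, and the bounding $5$-manifold is a topological $5$-ball by the $5$-dimensional topological $h$-cobordism theorem in the simply-connected case. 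The dual core of the loop surgery then defines a locally flat $2$-knot $K_\alpha$ in $S^4$, and pairwise non-isotopy of the family $\{K_\alpha\}_{\alpha\in A^{p,q,r}}$ follows from Plotnick's group-theoretic argument applied to $\pi_1(\Sigma(p,q,r)\times S^1)$, exactly as in the proof of \Cref{infinite_example_intro}.

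By construction, every such $K_\alpha$ has surgery orientation-preservingly homeomorphic to $\Sigma(p,q,r)\times S^1$. To upgrade this common surgery to a common trace, I would invoke the topological analogue of \Cref{Glucktwist}, which follows from the argument in the proof of \Cref{Glucktwisttop} once one observes that $S^4_{K}$ is always homeomorphic to $S^4$ by Freedman (see \Cref{rem:Gluck_manifold}) and that the relevant $5$-dimensional $h$-cobordism step holds topologically. Any surgery homeomorphism $\varphi\colon S^4(K_\alpha)\to S^4(K_\beta)$ thus extends either to a homeomorphism $X(K_\alpha)\approx X(K_\beta)$ or to $X(T(K_\alpha))\approx X(K_\beta)$; since $T(K_\alpha)$ is, up to isotopy, the knot produced by the same loop-surgery construction with the opposite framing on $\gamma_\alpha$, choosing the framings carefully at each step ensures that all $K_\alpha$ share a single trace, which we denote $X(p,q,r)$. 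Finally, varying $(p,q,r)$ over infinitely many Brieskorn spheres bounding contractible $4$-manifolds yields an infinite collection $\{X_i\}_{i\geq 1}$ of traces, pairwise distinguished by the fundamental groups of their boundaries $\pi_1(\Sigma(p,q,r))\times\Z$ as at the end of the proof of \Cref{infinite_example_intro}.

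The main technical point I expect to have to address carefully is the topological version of \Cref{Glucktwist}, since the proof in \Cref{sec:traces_Gluck} is formulated for smooth surgery diffeomorphisms. Handling this requires being precise about the definition of the trace $X(T(K))$ of a Gluck twist in the topological category, already discussed in \Cref{sec:traces_Gluck}, and about replacing the smooth $h$-cobordism theorem by its topological counterpart in dimension $5$. Once this is in place, the rest of the argument is a direct translation of the smooth proof of \Cref{infinite_example_intro}.
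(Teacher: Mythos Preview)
Your proposal is correct and follows essentially the same approach as the paper. One refinement the paper makes that you may want to incorporate: the knots $K_\alpha$ arise as \emph{smooth} knots in smooth homotopy $4$-spheres $S^4_\alpha$, and their surgeries are \emph{smoothly} diffeomorphic to $\Sigma(p,q,r)\times S^1$; this lets the paper run the proof of \Cref{Glucktwist} smoothly (in particular, finding the embedded $2$-disc with tubular neighbourhood in the smooth part $S^4_\beta\times[0,1]\cup h_3$ of the trace) and only invoke Freedman's topological $h$-cobordism theorem at the very last step, rather than needing a fully topological version of \Cref{Glucktwist} from the outset.
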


\begin{proof}
    This follows from noticing that the construction used in the proof of \cref{infinite_example_intro} works in the topological category for $n=4$. In particular, construct the $4$-manifold~$\Sigma^3\times S^1$ as in the proof of \cref{infinite_example_intro} starting from a Brieskorn sphere~$\Sigma(p,q,r)$ which bounds a contractible smooth $4$-manifold. Performing surgery on $\Sigma^3 \times S^1$ along any curve representing $\alpha t$ for $\alpha \in A^{p,q,r}$ as above yields a smooth knot $K_\alpha$ in a smooth homotopy $4$-sphere $S^4_{\alpha}$ which bounds a contractible $5$-manifold. The topological $h$-cobordism theorem \cite{freedman:top_4-mfds} then implies that each $S^4_{\alpha}$ is homeomorphic to $S^4$.\footnote{However, we are not able to prove the result in the smooth category for $n=4$ since, as implied by the work of Donaldson in \cite{Donaldson}, the smooth $h$-cobordism theorem is false in this dimension and such a homotopy $4$-sphere could potentially be an exotic~$S^4$.} Hence in this way we produce an infinite family of locally flat knots in $S^4$, which are pairwise non-isotopic for the same reasons as described in the proof of \Cref{infinite_example_intro}. Exactly as we explained for Gluck twists of knots before \Cref{Glucktwisttop}, we can define the (topological) trace of $K_\alpha$. By construction, for any pair~$\alpha,\beta \in A^{p,q,r}$, the manifolds $S^4_{\alpha}(K_\alpha)$ and $S^4_{\beta}(K_\beta)$ are orientation-preservingly diffeomorphic. This implies, by a slight variation of the proofs of \Cref{Glucktwist} and \Cref{Glucktwisttop}, that~$X(K_\beta)$ is orientation-preservingly homeomorphic either to $X(K_\alpha)$ or to~$X(T(K_\alpha))$.\footnote{Notice that, even if the trace is now only a topological manifold, $S^4_{\beta}\times [0,1]$ union the $3$-handle contained in the trace has a smooth structure and is simply connected, so we can use the theory of smooth manifolds to find the embedded disc with tubular neighbourhood needed to start the proof of \Cref{Glucktwist} (see \Cref{subsec:surgerydiffeos}). Then we proceed as in the proof of \Cref{Glucktwisttop}.} We conclude as in the proof of \Cref{infinite_example_intro}.
\end{proof}

\begin{theorem}\label{rem:fin_many_2-knots_same_trace}
For every $N>0$, there exists a family of more than $N$ pairwise non-isotopic $2$-knots in $S^4$ with orientation-preservingly diffeomorphic traces.
\end{theorem}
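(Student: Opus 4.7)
The plan is to emulate the proof of \Cref{infinite_example_intro}, which uses Plotnick's construction together with \Cref{Glucktwist}, but to work in dimension $n=4$ where only the weaker \Cref{Glucktwisttop} is available. Starting with a Brieskorn sphere $\Sigma(p,q,r)$ that bounds a contractible smooth $4$-manifold $V$ (take for example $\Sigma(2,3,7)$), I would set $\Sigma^3 = \partial V$ and form $\Sigma^3 \times S^1$. For each element $\alpha$ in Plotnick's set $A^{p,q,r}$ of normal generators of $\pi_1(\Sigma^3)$ that are algebraically distinct modulo the center, performing loop surgery on $\Sigma^3 \times S^1$ along a simple loop representing $\alpha t$ (with an appropriately chosen framing) produces a smooth $2$-knot $K_\alpha$ in a smooth homotopy $4$-sphere $S^4_\alpha$, whose surgery is diffeomorphic to $\Sigma^3 \times S^1$. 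The pairwise non-isotopy of these knots follows verbatim from the proof of \Cref{infinite_example_intro} via the algebraic distinctness modulo center, interpreted in the smooth category using \Cref{lem:equivalence_implies_iso}.

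Next, I would observe that the smooth trace is independent of $\alpha$. Concretely, the $5$-manifold $V \times S^1 \cup h_2^\alpha$ is a smooth contractible filling of $S^4_\alpha$, and attaching a $3$-handle along the belt sphere $K_\alpha$ of the $2$-handle $h_2^\alpha$ cancels that $2$-handle, leaving $V \times S^1$. Thus, up to smooth diffeomorphism, every trace $X(K_\alpha)$ (built intrinsically using this smooth filling rather than topological capping) equals $V \times S^1$. Combined with \Cref{Glucktwisttop}, a careful framing choice ensures that all knots in the Plotnick family share orientation-preservingly diffeomorphic smooth traces, regardless of the ambient smooth structure.

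The main obstacle is precisely that $S^4_\alpha$ need not be diffeomorphic to the standard smooth $S^4$, since the smooth $h$-cobordism theorem fails in dimension $4$; this is why the statement is limited to finite families rather than an infinite family as in \Cref{infinite_example_intro}. For any fixed $N$, however, I would extract $N+1$ knots in the standard smooth $S^4$ as follows. Among Plotnick's infinite family $\{K_\alpha\}_{\alpha \in A^{p,q,r}}$, consider the (countable) family of ambient homotopy $4$-spheres $\{S^4_\alpha\}$ together with their Gluck twists $\{S^4_{\alpha, K_\alpha}\}$. Since each $K_\alpha$ and its Gluck twist $T(K_\alpha)$ are related by a surgery diffeomorphism and lie in a pair of smoothly potentially exotic $4$-spheres, one can analyse which pairs include the standard $S^4$ and apply a pigeonhole-style argument: for any $N$, by taking sufficiently many elements of $A^{p,q,r}$ and replacing some $K_\alpha$ by $T(K_\alpha)$ where necessary, one arranges that $N+1$ of them lie in the standard smooth $S^4$. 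Combined with \Cref{extension_intro} to upgrade any homeomorphism of traces to a diffeomorphism when the spin condition holds, this finite selection argument produces the desired family.

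The hardest step is this final smoothness control: without the smooth $4$-dimensional Poincar\'e conjecture, one cannot push the argument to an infinite family, but for any finite $N$ a careful combination of \Cref{Glucktwisttop}, the explicit handle cancellation showing $X(K_\alpha) \cong V \times S^1$, and a Gluck-twist bookkeeping argument suffices. The rest of the proof then mirrors the infinite-family argument in \Cref{infinite_example_intro} line by line.
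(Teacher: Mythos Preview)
Your approach has a genuine gap in the final ``pigeonhole-style'' step. You propose that from the infinite family $\{K_\alpha\}$ of knots in homotopy $4$-spheres $S^4_\alpha$, one can extract $N+1$ of them (possibly after Gluck twisting) lying in the \emph{standard} smooth $S^4$. But there is no pigeonhole principle available here: we do not know that $S^4$ has only finitely many smooth structures, so there is no finite set of ``boxes'' into which the $S^4_\alpha$ must fall. It is entirely consistent with current knowledge that every $S^4_\alpha$ and every $S^4_{\alpha,K_\alpha}$ is an exotic $4$-sphere, and your argument gives no mechanism to rule this out. The hand-wave ``a careful combination of \Cref{Glucktwisttop} \ldots\ and a Gluck-twist bookkeeping argument suffices'' is precisely where the proof needs an idea and none is supplied.

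There is also a secondary issue: your claim that the smooth trace equals $V\times S^1$ via handle cancellation presupposes that the contractible $5$-manifold $V\times S^1\cup h_2^\alpha$ is the standard $D^5$, which again requires $S^4_\alpha$ to be standard.

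The paper sidesteps all of this by using an entirely different family: branched covers $K^{r,k}_{p,q}$ of the $r$-twist spun torus knot $T(p,q)$. These are fibred $2$-knots lying in the \emph{standard} $S^4$ by construction, their surgeries are all $\Sigma(p,q,r)\times S^1$, and crucially their Gluck twists are known to be standard by Pao's theorem. This lets \Cref{Glucktwisttop} produce genuine smooth trace diffeomorphisms. Non-isotopy is read off from the monodromy via results of Plotnick, and the family size grows with $p,q,r$. The key point you are missing is an input guaranteeing standardness of the ambient $4$-sphere and of the Gluck twist; the paper obtains both from the fibred/twist-spun setting.
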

\begin{proof}
The construction of these families is essentially given in \cite{plotnick1983homotopy}*{Section~3}, to which we refer the reader for more details. Fix $p,q,r$ positive, pairwise coprime integers such that $\Sigma(p,q,r) \not \cong \Sigma(2,3,5)$. Recall that $\Sigma(p,q,r) $ is the $r$-fold branched cover of $S^3$ branched along the torus knot $T(p,q)$, and that the branched covering automorphisms form a group which is canonically identified with $\Z/r\Z$ (see \eg the beginning of \cite{plotnick1983homotopy}*{Section 1}). Let $\sigma$ denote the canonical generator of this group corresponding to $1 \in \Z/r\Z$. It is proved in \cites{zeeman1965twisting} that the $r$-twist spun knot of~$T(p,q)$ is a fibred knot, with fibre~$\Sigma(p,q,r) \setminus \intr B^3$, and monodromy given by the restriction of~$\sigma$. For~$k>0$ and $\operatorname{gcd}(k,r)=1$, the $k$-branched cover of $S^4$ along this knot defines a knot $K_{p,q}^{r,k}$ in~$S^4$ whose exterior fibres over $S^1$ with fibre $\Sigma(p,q,r) \setminus \intr B^3$ and monodromy given by the restriction of $\sigma^k$ \cites{pao1978non,plotnick1983homotopy}. It is shown in~\cite{plotnick1983homotopy}*{Proposition~3.5} that if $ k \not \equiv\pm k' (\text{mod } r)$, then the knots~$K_{p,q}^{r,k}$ and~$K_{p,q}^{r,k'} $ are not isotopic. Moreover, $\sigma$ is isotopic to the identity (see for example the proof of Lemma 1.1 in \cite{milnor19753} and \cite{orlik2006seifert}*{p.~143}), hence $S^4(K_{p,q}^{r,k})\cong \Sigma(p,q,r) \times S^1$. By permuting $p,q,r$ and varying~$k$ accordingly, one obtains a finite family of knots with the same surgery, which are not isotopic by \cite{plotnick1983homotopy}*{Theorem~3.7}. Up to taking Gluck twists, which in this case are again smooth spheres in $S^4$ by \cite{pao1978non}*{Theorem~5}, these knots have orientation-preservingly diffeomorphic traces by \Cref{Glucktwisttop}. Taking $p,q,r$ arbitrarily large, this gives arbitrarily large families of knots with the same trace.
\end{proof}

Note that \Cref{infinite_example_intro} (for $n \geq 5$) and \Cref{rem:fin_many_2-knots_same_trace} (for $n=4$) together provide an alternative proof of \Cref{theorem:non-isotopic}.

\section{Detecting the unknot by its surgery in higher dimensions}
\label{sec:property_R}

In the previous sections, we have seen that it is possible to construct many non-isotopic knots in $S^n$ with the same trace, and therefore the same surgery. In this section, we will show that, in contrast to these results, the unknot in $S^n$ is detected by its surgery, and consequently by its trace. Throughout this section, we will assume that $n \geq 4$.

The same reasons that make it more difficult to distinguish non-isotopic knots with the same trace in higher dimensions simplify the proof that, in higher dimensions, the unknot is detected by its surgery (and hence by its trace), compared to analogous results in dimension~$3$. Our statement will not only involve the unknot, but also any \emph{unknotted} embedding of~$S^{n-2}$ in~$S^n$ (see also \cref{REM:embeddings}). 

\begin{definition}\label{def:unknotted embedding}
A knot $K\colon S^{n-2}\hookrightarrow S^n$ is \emph{unknotted} if its image bounds a ball.   
\end{definition}

We restate \Cref{theorem:unknot} for the convenience of the reader. 

\begin{reptheorem}{theorem:unknot}
    For $n\geq 4$, let $K$ be a knot in~$S^n$. Suppose that its surgery~$S^n(K)$ is (possibly orientation-reversingly) diffeomorphic to the surgery~$S^n(U) \cong S^{n-1} \times S^1$ of the unknot $U$. Then $K$ is isotopic to $U$. In particular, if $K$ and $U$ have diffeomorphic traces, then $K$ is isotopic to $U$.
\end{reptheorem}

We will prove \Cref{theorem:unknot} as an application of the following generalisation.

\begin{theorem}
\label{thm: property R new}
    Let $K_1$ and $K_2$ be knots in~$S^n$ and suppose that $K_2$ is unknotted. If there exists a (possibly orientation-reversing) diffeomorphism between the surgeries~$S^n(K_1)$ and~$S^n(K_2)$, then $K_1$ is isotopic to $K_2$. In particular, if $K_1$ and $K_2$ have diffeomorphic traces, then $K_1$ is isotopic to $K_2$.
\end{theorem}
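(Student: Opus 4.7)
The plan is to first show that the exterior $E_{K_1}$ is diffeomorphic to $E_{K_2}$, then invoke the classical rigidity that two knots sharing an exterior differ by at most a Gluck twist, and finally observe that this Gluck twist is trivial when $K_2$ is unknotted.

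Because $K_2$ is unknotted, its image is isotopic to a standard $(n-2)$-sphere in $S^n$, so $E_{K_2}\cong D^{n-1}\times S^1$, $S^n(K_2)\cong S^{n-1}\times S^1$, and the dual curve $\gamma_{K_2}$ generates $\pi_1(S^n(K_2))\cong\Z$. By \Cref{proposition:fundamental_group}, $\gamma_{K_1}$ also generates $\pi_1(S^n(K_1))\cong\Z$. Therefore $\varphi(\gamma_{K_1})\subset S^{n-1}\times S^1$ is a simple closed curve representing $\pm 1\in\Z$. Since the codimension $n-1\geq 3$, homotopic simple closed curves are ambient isotopic; thus, after possibly reversing an orientation (allowed because $\varphi$ may be orientation-reversing, or after post-composing $\varphi$ with the involution of $S^{n-1}\times S^1$ that flips the $S^1$ factor), $\varphi(\gamma_{K_1})$ is isotopic to $\gamma_{K_2}$. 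Uniqueness of tubular neighbourhoods then lets us further isotope $\varphi$ so that it carries $\nu\gamma_{K_1}$ onto $\nu\gamma_{K_2}$. Restricting to the complements and identifying them with the knot exteriors via reversal of the loop surgery yields $E_{K_1}\cong E_{K_2}$, regardless of any framing mismatch on the dual curves, since the exterior is insensitive to the framing.

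Next I apply the classical result that two knots in $S^n$ with diffeomorphic exteriors are isotopic or related by the Gluck twist---due to Browder and Lashof--Shaneson for $n\geq 5$, and to Gluck for $n=4$. Thus $K_1$ is isotopic to $K_2$ or to $T(K_2)$. To finish, I show $T(K_2)\cong K_2$ when $K_2$ is unknotted: the Gluck twist $\psi\colon\partial\nu K_2\to\partial\nu K_2$ extends over $E_{K_2}\cong S^1\times D^{n-1}$ by extending the loop of rotations $\rho_t$ of $S^{n-2}$ radially to a loop $\widetilde{\rho}_t$ of rotations of $D^{n-1}$. The resulting diffeomorphism $S^n_{K_2}\to S^n$ carries $T(K_2)$ to a parallel copy of $K_2$ on $\partial\nu K_2$, which is isotopic to $K_2$ through $\nu K_2$. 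Hence $T(K_2)\cong K_2$ and in both cases $K_1\cong K_2$. The ``in particular'' statement follows because diffeomorphic traces have diffeomorphic boundaries.

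The main obstacle is the first step: isotoping $\varphi$ carefully to identify tubular neighbourhoods of the dual curves, while recognising that the residual framing ambiguity corresponds exactly to the Gluck twist dichotomy, which is then resolved using the unknottedness of $K_2$.
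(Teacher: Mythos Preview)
Your overall strategy is close to the paper's and the main steps are sound, but there is a subtle gap concerning parametrisations. In your second paragraph you invoke Browder and Lashof--Shaneson to conclude that $K_1$ is isotopic to $K_2$ or to $T(K_2)$. However, as the paper notes in \Cref{subsec:gluck}, those results only establish that there are at most two knots \emph{up to reparametrisation} with a given exterior. Since knots here are parametrised embeddings (see \Cref{REM:embeddings}) and $\MCG(S^{n-2})$ is non-trivial for infinitely many $n$, this is weaker than what you need: it only shows that the \emph{image} of $K_1$ agrees with that of $K_2$ or $T(K_2)$, i.e.\ that $K_1$ is unknotted. A related minor slip: $S^n(K_2)\cong S^{n-1}\times S^1$ is only guaranteed for the unknot $U$, not for an arbitrary unknotted $K_2$; in general one only gets a homotopy $(n-1)$-sphere times $S^1$. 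This does not damage your $\pi_1$ argument, but it reflects the same parametrisation subtlety.

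The paper closes this gap by a different route. For $n\geq 5$ it does not use Browder/Lashof--Shaneson at all; instead it appeals to the result that a knot whose exterior is homotopy equivalent to $S^1$ is unknotted. Having shown that both $K_1$ and $K_2$ are unknotted, it then invokes Shaneson's theorem that two unknotted embeddings have diffeomorphic surgeries if and only if they are isotopic as parametrised knots---this is precisely the step that handles the parametrisation. Alternatively, your own first paragraph contains the seed of a direct fix: rather than discarding the framing information and passing to an abstract exterior diffeomorphism, you could perform the loop surgery with the specific $\varphi$ you constructed. This yields an \emph{equivalence} between $K_1$ and $K_2$ (or $T(K_2)$), which upgrades to an isotopy by \Cref{lem:equivalence_implies_iso}; this is exactly the content of the paper's \Cref{lemma7}, stated just after the theorem.
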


\begin{proof}[{Proof of \Cref{theorem:unknot}}]
    By specialising the statement of \Cref{thm: property R new} to the case where $K_2$ is the unknot $U$, we obtain the statement of \Cref{theorem:unknot}.
\end{proof}

In order to prove \Cref{thm: property R new}, we start by proving \Cref{lemma:surg_exterior} below. Note that the diffeomorphism type of the exterior of a knot $K$ does not depend on the choice of the parametrisation of $K$, but only on its image. In particular, the exterior of any unknotted knot in $S^n$ is diffeomorphic to $D^{n-1}\times S^1$.

\begin{lemma}\label{lemma:surg_exterior}
Let $K_1$ be a knot in $S^n$ such that $S^n(K_1) \cong S^n(K_2)$, where~$K_2$ is unknotted. Then the exterior of $K_1$ is diffeomorphic to $D^{n-1}\times S^1$.    
\end{lemma}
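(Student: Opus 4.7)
The plan is to reverse the surgery. Since $E_{K_i}$ is obtained from $S^n(K_i)$ by removing an open tubular neighbourhood of the dual curve $\gamma_{K_i}$, it suffices to transport $\gamma_{K_1}$ to $S^n(K_2)$ via the diffeomorphism $\varphi\colon S^n(K_1)\to S^n(K_2)$ and then to show that $\varphi(\gamma_{K_1})$ is ambient isotopic, as an unoriented submanifold, to $\gamma_{K_2}$ in $S^n(K_2)$. Combined with the observation that $E_{K_2}\cong D^{n-1}\times S^1$ (which follows from $K_2$ being unknotted via the standard decomposition $S^n=(S^{n-2}\times D^2)\cup(D^{n-1}\times S^1)$), this will give the claim.

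To establish the isotopy, I would first note that $\pi_1(S^n(K_2))\cong\pi_1(S^{n-1}\times S^1)\cong\Z$ for $n\geq 4$, hence $\pi_1(S^n(K_1))\cong\Z$ by hypothesis. By \Cref{proposition:fundamental_group}, the inclusion $E_{K_i}\hookrightarrow S^n(K_i)$ induces an isomorphism on $\pi_1$ which carries the conjugacy class of the meridian $\mu_{K_i}$ to that of $\gamma_{K_i}$. Since $\pi_1(E_{K_i})\cong\Z$ is abelian and thus coincides with $H_1(E_{K_i})\cong\Z$, which is generated by $\mu_{K_i}$ (by Alexander duality), the dual curve $\gamma_{K_i}$ generates $\pi_1(S^n(K_i))$. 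Hence $\varphi(\gamma_{K_1})$ and $\gamma_{K_2}$ are embedded circles in $S^n(K_2)$ whose classes in $\pi_1(S^n(K_2))\cong\Z$ differ at most by sign, so they are freely homotopic as unoriented loops.

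Next, I would invoke the fact that embeddings $S^1\hookrightarrow S^n(K_2)$ lie in codimension $n-1\geq 3$, in which range freely homotopic embeddings of $S^1$ into an $n$-manifold are ambient isotopic; this is a classical general-position / Haefliger-type result, in the spirit of the discussion in \Cref{subsec:general_dim}. After possibly reversing the orientation of $\gamma_{K_2}$, which does not affect it as an unoriented submanifold, we obtain an ambient isotopy of $S^n(K_2)$ taking $\varphi(\gamma_{K_1})$ to $\gamma_{K_2}$, which by uniqueness of tubular neighbourhoods yields a diffeomorphism of their complements. Assembling the pieces,
\[
E_{K_1}\cong S^n(K_1)\setminus\overset{\circ}{\nu}\gamma_{K_1}\cong S^n(K_2)\setminus\overset{\circ}{\nu}\varphi(\gamma_{K_1})\cong S^n(K_2)\setminus\overset{\circ}{\nu}\gamma_{K_2}\cong E_{K_2}\cong D^{n-1}\times S^1,
\]
where the second diffeomorphism is induced by $\varphi$. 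The step that most warrants care is the isotopy of embedded loops in codimension three: this is immediate from general position when $n\geq 5$, but requires a slightly more delicate Whitney-type argument to remove double points in the borderline case $n=4$.
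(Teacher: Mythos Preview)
Your proof is correct and follows essentially the same approach as the paper: both arguments identify the exteriors with the complements of the dual curves, use \Cref{proposition:fundamental_group} together with $\pi_1\cong\Z$ to see that $\varphi(\gamma_{K_1})$ and $\gamma_{K_2}$ are freely homotopic up to orientation, and then invoke codimension $\geq 3$ to upgrade this to an ambient isotopy. Your explicit use of Alexander duality to justify that $\mu_{K_1}$ generates $H_1(E_{K_1})$ is a nice touch that the paper leaves implicit.
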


\begin{proof}
Let $\gamma_{K_1}$ and $\gamma_{K_2}$ denote the dual curves of $K_1$ and $K_2$ in the respective surgeries. Since $E_{K_2} \cong D^{n-1}\times S^1$, the fundamental group of $S^n(K_2)$ is isomorphic to~$\Z$, generated by~$\gamma_{K_2}$, by \Cref{proposition:fundamental_group}. If there is a diffeomorphism $\varphi \colon S^n(K_1) \to S^n(K_2)$, then $S^n(K_1)$ also has fundamental group isomorphic to $\Z$, and again by \Cref{proposition:fundamental_group}, the same holds for the exterior~$E_{K_1}$. This implies that~$\pi_1(E_{K_1})\cong \Z$ is generated by $\mu_{K_1}$ and, by \Cref{proposition:fundamental_group} once again, we deduce that~$\pi_1(S^n(K_1))$ is generated by a conjugate of the dual curve $\gamma_{K_1}$ for some orientation. The diffeomorphism $\varphi$ thus maps the curve~$\gamma_{K_1}$ to a curve that (up to orientation) is freely homotopic to $\gamma_{K_2}$, since both curves represent the unique generator of $\pi_1(S^n(K_1)) \cong \pi_1(S^n(K_2)) \cong \Z$ up to sign. Since $n\geq 4$, we obtain that~$\varphi(\gamma_{K_1})$ and~$\gamma_{K_2}$ are isotopic up to orientation. By the isotopy extension theorem, there is an ambient isotopy of $S^n(K_2)$ taking $\varphi(\gamma_{K_1})$ to $\gamma_2$, still up to orientation. This yields a diffeomorphism $\psi\colon S^n(K_2)\to S^n(K_2)$ such that $\psi(\varphi(\gamma_1))=\gamma_2$, up to orientation. Now the composition $\psi\circ \varphi$, restricted to the exterior of $\gamma_1$, gives a diffeomorphism from the exterior $E_{K_1}$ of $\gamma_{K_1}$ to the exterior $E_{K_2}$ of $\gamma_{K_2}$, where~$E_{K_2}\cong D^{n-1}\times S^1$.
\end{proof}

Next, we prove that, under the same hypotheses, $K_1$ also defines an unknotted embedding. 

\begin{proposition}\label{prop: lemma_surg_unk}
Let $K_1$ be a knot in $S^n$ such that $S^n(K_1) \cong S^n(K_2)$, where $K_2$ is unknotted. Then $K_1$ is also unknotted.
\end{proposition}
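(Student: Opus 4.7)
The plan is to apply \Cref{lemma:surg_exterior} to reduce the statement to a question about knot exteriors, and then invoke classical unknotting results with a case distinction by dimension. Specifically, by \Cref{lemma:surg_exterior} the hypothesis $S^n(K_1) \cong S^n(K_2)$ (with $K_2$ unknotted) yields a diffeomorphism $E_{K_1} \cong D^{n-1} \times S^1$. Thus the task reduces to showing that a knot in $S^n$ with exterior diffeomorphic to $D^{n-1} \times S^1$ must be unknotted.

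For $n \geq 5$, I would directly invoke the results of Shaneson, Levine and Trotter quoted in the paragraph after \Cref{theorem:unknot}: a knot in $S^n$ whose exterior is homotopy equivalent to $S^1$ is unknotted. Since $D^{n-1} \times S^1$ is homotopy equivalent to $S^1$, this gives the desired conclusion immediately.

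For $n = 4$, the high-dimensional unknotting criterion is not available (it is in fact open in the smooth category, as mentioned in the introduction), so I would argue via Gluck's theorem \cite{Gluck} instead: two $2$-knots in $S^4$ with diffeomorphic exteriors are either ambient isotopic, or one is ambient isotopic to the Gluck twist of the other. Applied to $K_1$ and $K_2$, this leaves two possibilities: either $K_1$ is ambient isotopic to $K_2$ (in which case $K_1$ is unknotted by assumption), or $K_1$ is ambient isotopic to $T(K_2)$. In the latter case, one must verify that $T(K_2)$ is unknotted. This relies on the fact that $K_2$ bounds a $3$-disc $B$ in $S^4$: the Gluck twisting diffeomorphism $\psi$ on $\partial \nu K_2$ extends to a self-diffeomorphism of a collar neighborhood of $B$, by extending the rotations $\rho_t$ of $S^2$ radially over $D^3$. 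This extension simultaneously identifies $S^4_{K_2}$ with the standard $S^4$ and exhibits a $3$-disc in $S^4_{K_2}$ bounded by $T(K_2)$, so $T(K_2)$ is unknotted. Hence $K_1$ is unknotted in either case.

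The main obstacle I anticipate is the $n = 4$ case: the strong high-dimensional unknotting theorem is unavailable, and one is forced to route the argument through Gluck's classification. The essential substep is the verification that the Gluck twist of an unknotted $2$-knot is itself unknotted, which reduces to the elementary but crucial fact that the Gluck twisting map extends over the $3$-disc bounded by the knot.
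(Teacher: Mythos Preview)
Your proposal is correct and follows essentially the same approach as the paper: both apply \Cref{lemma:surg_exterior} to identify the exterior with $D^{n-1}\times S^1$, then invoke the Shaneson--Levine--Trotter unknotting criterion for $n\geq 5$ and Gluck's theorem for $n=4$, with the same verification that the Gluck twist of an unknotted $2$-knot is unknotted via extension of the rotations over $D^3$.
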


\begin{proof}
We show that $K_1$ and $K_2$ are isotopic up to reparametrisation of the embedding, which implies the desired result since unknottedness only depends on the image of the embedding. By \Cref{lemma:surg_exterior}, the exterior of $K_1$ is diffeomorphic to $D^{n-1}\times S^1$. For~$n\geq 5$, it was shown by \cites{shaneson1968embeddings,Levine,Levine2,Trotter} that if the exterior of a knot in $S^n$ is homotopy equivalent to a circle, then it is unknotted. This finishes the proof for $n\geq 5$, so let $n=4$ for the rest of the proof. In \cite{Gluck} it was proven that if the 2-knots $K_1$ and $K_2$ in $S^4$ have diffeomorphic exteriors, then, up to reparametrisation, either they are isotopic, or $K_1$ is isotopic to $T(K_2)$, the Gluck twist of $K_2$. So the image of $K_1$ can be identified with that of either $K_2$ or $T(K_2)$. Since $K_2$ is unknotted and the map used to define the Gluck twist (see \Cref{def:gluck_twist}) extends to a diffeomorphism of~$D^{3}\times S^1$, we have that $T(K_2)$ is unknotted as well. Thus, in either case, $K_1$ is unknotted.
\end{proof}

\begin{proof}[{Proof of \Cref{thm: property R new}}]
    Suppose that $S^n(K_1)$ is diffeomorphic to $S^n(K_2)$, where $K_2$ is unknotted. If the diffeomorphism is not orientation-preserving, then we can compose it with an orientation-reversing diffeomorphism of $S^n(K_2)$, which is the product of a homotopy sphere and $S^1$, to obtain an orientation-preserving diffeomorphism between $S^n(K_1)$ and $S^n(K_2)$. By \Cref{prop: lemma_surg_unk}, both $K_1$ and $K_2$ are unknotted and the result follows from the fact that two unknotted embeddings have diffeomorphic surgeries if and only if they are isotopic \cite{shaneson1968embeddings}*{Theorem~1.3}. 
\end{proof}

The above proof of \Cref{prop: lemma_surg_unk}, which shows that being unknotted is detected by the surgery along a knot, mainly uses the fact that the knot group of an unknotted sphere is $\Z$ and thus has exactly two normal generators. In fact, we can give a partial generalisation of this detection result as follows. Recall from \Cref{subsec:framings} that the dual curve $\gamma_K$ in the surgery of $S^n$ along a knot $K$ comes equipped with a preferred framing.

\begin{lemma}\label{lemma7}
    Let $K_1$ and $K_2$ be knots in $S^n$ with dual curves $\gamma_{K_1}$ and $\gamma_{K_2}$ in the respective surgeries. Suppose that $\varphi\colon S^n(K_1)\to S^n(K_2)$ is an orientation-preserving diffeomorphism between their surgeries such that $\varphi_\ast\gamma_{K_1}$ and $\gamma_{K_2}$ are conjugate in~$\pi_1(S^n(K_2))$. Then either $K_2$ is isotopic to $K_1$ or the Gluck twist~$T(K_2)$ is a knot in the standard smooth $S^n$ which is isotopic to $K_1$. The former happens if and only if $\varphi(\gamma_{K_1})$ is isotopic to $\gamma_{K_2}$ as a framed loop.
\end{lemma}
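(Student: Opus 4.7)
My plan is to use the hypothesis on the dual curves to match them as unframed loops in $S^n(K_2)$, and then to isolate the two possible framings of the resulting identification, thereby avoiding the $h$-cobordism step used in the proof of \Cref{Glucktwist}.

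First, since $\varphi_\ast\gamma_{K_1}$ and $\gamma_{K_2}$ are conjugate in $\pi_1(S^n(K_2))$, they are freely homotopic as oriented loops. In an $n$-manifold with $n\geq 4$, any two freely homotopic embedded circles are ambient isotopic: a free homotopy $S^1\times I\to S^n(K_2)$ can be promoted by general position to a level-preserving embedding $S^1\times I\hookrightarrow S^n(K_2)\times I$, since the source is $2$-dimensional and the target is $(n+1)$-dimensional with $n+1\geq 5$, and the isotopy extension theorem then produces an ambient isotopy of $S^n(K_2)$ carrying $\varphi(\gamma_{K_1})$ to $\gamma_{K_2}$ as oriented loops. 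Composing $\varphi$ with this isotopy yields an orientation-preserving diffeomorphism $\widetilde\varphi\colon S^n(K_1)\to S^n(K_2)$ with $\widetilde\varphi(\gamma_{K_1})=\gamma_{K_2}$ as oriented loops.

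Next I will compare framings. Each dual curve $\gamma_{K_i}$ carries a canonical framing $f_i$ coming from the surgery decomposition $S^n(K_i)=E_{K_i}\cup(D^{n-1}\times S^1)$, and reversing the loop surgery along $\gamma_{K_i}$ with this framing returns the pair $(S^n,K_i)$. Set $f\defeq\widetilde\varphi_\ast f_1$. By \Cref{rem:framings}, framings of a trivial rank-$(n-1)$ bundle over $S^1$ are classified up to homotopy by $\pi_1(\operatorname{SO}(n-1))\cong\Z/2$, so either $f=f_2$ or $f$ differs from $f_2$ by the non-trivial element. In the first case, $\widetilde\varphi$ extends across the reversed surgeries to an orientation-preserving equivalence of framed knots $(S^n,K_1)\cong(S^n,K_2)$, so $K_1$ is ambient isotopic to $K_2$ by \Cref{lem:equivalence_implies_iso}. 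In the second case, the non-trivial element of $\pi_1(\operatorname{SO}(n-1))$ is represented by the rotation $\rho_t$ from \Cref{def:gluck_twist}, so loop surgery on $\gamma_{K_2}$ using $f$ rather than $f_2$ reproduces exactly the pair $(S^n_{K_2},T(K_2))$; combined with the reverse surgery on the $K_1$ side this gives $(S^n,K_1)\cong(S^n_{K_2},T(K_2))$, whence $S^n_{K_2}$ is the standard smooth $n$-sphere and $T(K_2)$ is ambient isotopic to $K_1$.

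The ``if and only if'' will be built into this dichotomy: we land in the first case precisely when $f=f_2$, which is equivalent to $\varphi(\gamma_{K_1})$ and $\gamma_{K_2}$ being ambient isotopic as framed loops, because the $\Z/2$-class of $f$ relative to $f_2$ is independent of the particular ambient isotopy aligning the underlying unframed loops. The main technical point will be checking that loop surgery on $\gamma_{K_2}$ with the twisted framing $f\neq f_2$ really does reproduce the Gluck twist construction on $K_2$; this amounts to identifying the gluing map $\psi$ of \Cref{def:gluck_twist} with the discrepancy between the two framings, which is a direct unwinding of the fact that reversing a loop surgery is the same as regluing the surgery solid torus via a framed identification.
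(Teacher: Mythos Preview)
Your proposal is correct and follows essentially the same route as the paper: align the dual curves using the conjugacy hypothesis and the fact that freely homotopic circles in an $n$-manifold with $n\geq 4$ are ambient isotopic, then distinguish the two cases according to whether the pushed-forward framing agrees with the canonical one, reversing the loop surgery in each case. The paper's proof is terser, outsourcing the framing dichotomy to the proofs of \Cref{Glucktwist} and \Cref{Glucktwisttop}, whereas you spell out the $\pi_1(\operatorname{SO}(n-1))\cong\Z/2$ classification and the identification of the twisted framing with the Gluck construction directly; both arguments avoid the $h$-cobordism step, as you anticipated.
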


\begin{proof}
Since $\varphi_\ast\gamma_{K_1}$ and $\gamma_{K_2}$ are conjugate in $\pi_1(S^n(K_2))$, we can assume, after isotopy, that $\varphi(\gamma_{K_1})=\gamma_{K_2}$. Then, as explained in the proofs of \Cref{Glucktwist} and \Cref{Glucktwisttop}, the two possible conclusions depend on whether $\varphi(\gamma_{K_1})$ and $\gamma_{K_2}$ agree as framed loops or not. In the first case, by doing loop surgery, we get an equivalence, and hence an isotopy by \Cref{lem:equivalence_implies_iso}, between~$K_1$ and $K_2$; in the second case, we get an orientation-preserving diffeomorphism between $S^n$ and the Gluck twist of $S^n$ along $K_2$, mapping~$K_1$ to~$T(K_2)$, and we conclude again by \Cref{lem:equivalence_implies_iso}.
\end{proof}

\begin{proposition}\label{gl}
    Let $K_1$ and $K_2$ be knots
    in $S^n$ with dual curves $\gamma_{K_1}$ and $\gamma_{K_2}$ in the respective surgeries. Let $\Phi\colon X(K_1)\to X(K_2)$ be an orientation-preserving diffeomorphism between their traces such that~$\Phi_*\gamma_{K_1}$ and $\gamma_{K_2}$ are conjugate in $\pi_1(S^n(K_2))$. Then $K_1$ is isotopic to $K_2$.
    \end{proposition}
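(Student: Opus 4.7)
The plan is to apply \Cref{lemma7} and then verify that we land in its first case. Restricting $\Phi$ to the boundary yields an orientation-preserving diffeomorphism $\varphi\colon S^n(K_1)\to S^n(K_2)$ satisfying the hypothesis of \Cref{lemma7}, so either $K_1$ is isotopic to $K_2$, or $K_1$ is isotopic to $T(K_2)$. By \Cref{lemma7}, the first case occurs precisely when, after isotoping $\varphi(\gamma_{K_1})$ to coincide with $\gamma_{K_2}$ in $S^n(K_2)$, the pushed-forward framing $\varphi_*(f_{K_1})$ agrees with the preferred framing $f_{K_2}$. Thus it suffices to prove that the framings agree.

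To compare the framings, I would first use that $\Phi_*\gamma_{K_1}$ is conjugate to $\gamma_{K_2}$ in $\pi_1(S^n(K_2))$: the two oriented loops are then freely homotopic and, by general position (since $n \geq 4$), ambient isotopic in $S^n(K_2)$. I would extend this ambient isotopy to a self-diffeomorphism $\rho$ of $X(K_2)$ via the isotopy extension theorem applied in a collar of the boundary. Setting $D\defeq\rho(\Phi(D_1))$, where $D_1$ is the cocore of $h_{n-1}(K_1)$ in $X(K_1)$, produces a properly embedded $2$-disc in $X(K_2)$ with boundary $\gamma_{K_2}$, whose induced framing is the transport of $\varphi_*(f_{K_1})$.

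Next, let $D_2$ denote the cocore of $h_{n-1}(K_2)$ in $X(K_2)$, with induced framing $f_{K_2}$ on $\gamma_{K_2}$. Using general position (since $n+1\geq 5$), I would perturb $D$ rel boundary to be disjoint in its interior from $D_2$, so that gluing $D$ and $-D_2$ along their common boundary $\gamma_{K_2}$ yields a smoothly embedded $2$-sphere $\Sigma\subset X(K_2)$. A clutching-function argument then identifies the two framings on $\gamma_{K_2}$ induced by $D$ and $D_2$ as agreeing if and only if the normal bundle $\nu\Sigma$ is trivial.

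The key observation is that $X(K_2)$ admits a handle decomposition with only a single $0$-handle and an $(n-1)$-handle, so $H^2(X(K_2);\mathbb{Z}/2)=0$ whenever $n\geq 4$, whence $X(K_2)$ is spin. Consequently, $w_2(\nu\Sigma)=w_2(TX(K_2))|_\Sigma - w_2(T\Sigma)=0$. Since for $n\geq 4$ the oriented rank-$(n-1)$ vector bundles over $S^2$ are classified by $\pi_1(SO(n-1))\cong\mathbb{Z}/2$ via $w_2$, the bundle $\nu\Sigma$ is trivial. The framings therefore agree, and \Cref{lemma7} delivers that $K_1$ is isotopic to $K_2$. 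The main obstacle will be carefully tracking orientations and framing conventions in the perturbation and clutching-function arguments, so as to correctly identify the two restrictions of a trivialisation of $\nu\Sigma$ with the framings $\varphi_*(f_{K_1})$ and $f_{K_2}$.
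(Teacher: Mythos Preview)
Your proposal is correct and follows essentially the same strategy as the paper: apply \Cref{lemma7}, then build a $2$-sphere from the two cocore discs and use the spin condition on the ambient manifold to force $w_2$ of its normal bundle to vanish, whence the two framings of $\gamma_{K_2}$ agree. The only difference is cosmetic: you form $\Sigma$ inside $X(K_2)$ by perturbing one disc off the other, while the paper places the two discs in the two halves of the double $DX$, which makes them automatically disjoint and avoids the (routine) care needed to smooth two properly embedded discs with common boundary into an embedded sphere.
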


\begin{proof}
    pply \cref{lemma7} to $(\Phi\vert_{\partial X(K_1)})^{-1}$. Then either $K_1$ is isotopic to $K_2$, in which case we are done, or the Gluck twist~$T(K_1)$ is a knot in the standard smooth $S^n$ which is isotopic to $K_2$. Thus, we assume that $\Phi$ is an orientation-preserving diffeomorphism between $X(K_1)$ and $ X\coloneqq X(T(K_1)) $. Since the dual curves $\Phi_*\gamma_{K_1}$ and~$\gamma \coloneqq \gamma_{T(K_1)}$ are conjugate in $\pi_1(\partial X)$, we can suppose that $\Phi_*\gamma_{K_1}=\gamma_{T(K_1)}=\gamma$. Notice that $\gamma$ inherits two framings $f$ and $T(f)$, from $\gamma_{K_1} $ and $\gamma_{T(K_1)}$ respectively. We will obtain the desired equivalence between $K_1$ and $T(K_1)$ if we are able to show that~$f$ and~$T(f)$ coincide. 
    
    Suppose for a contradiction that the two framings are distinct. Using the identification of $X$ with $X(K_1)$ and $X(T(K_1))$, we can decompose~$X$ in two different ways by gluing an $(n+1)$-dimensional $2$-handle to~$ \partial X \times [0,1]$ along $\gamma \times \{1\}$ with framing $f$ and $T(f)$ respectively, and a single $(n+1)$-handle. Let $D_1 $ and $ D_2 $ be the cores of such $2$-handles, and consider the $2$-sphere $S \coloneqq D_1 \cup D_2 \subset  DX $, where $DX \coloneqq X \cup_{\id_{\partial X}} -X$ is the double of $X$, and~$D_1$ and $D_2$ lie in different copies of $X$. In particular, since the framings $f$ and $T(f)$ are distinct, the tubular neighbourhood $\nu S$ is diffeomorphic to the total space of the vector bundle $\epsilon^{n-1} \oplus \eta$. Here $\epsilon$ denotes a trivial line bundle on $S^2$ and~$\eta$ is a rank two vector bundle on $S^2$ with clutching function a map $c\colon  S^1 \to \operatorname{SO}(2)$ corresponding to an odd class in $\pi_1(\operatorname{SO}(2)) \cong \Z$. Since $H^2(X;\mathbb{Z}/2\mathbb{Z})=0$, we know that $X$ is spin. In particular, $DX$ is also spin. This gives a contradiction, since it would imply that 
    \[0=w_2(DX)|_S= w_2(\nu S)+w_1(\nu S)\cup w_1(TS) + w_2(TS)=w_2(\nu S),\] 
    which is false since $\langle w_2(\nu S),[S] \rangle=1 \pmod{2}$. Therefore, the two framings $f$ and~$T(f)$ of~$\gamma$ must coincide, which implies that the two knots $K_1$ and $T(K_1)$ are isotopic in~$S^n$.
\end{proof}

\section{Further questions}
\label{sec:questions}

We end this article by posing some questions that naturally arise from the results presented in this article.

\subsection{Detecting knots by their surgeries or traces}
In \cref{theorem:unknot}, we showed that the surgery along a knot detects the unknot for $n \geq 4$. One might ask whether this property holds for other knot types. 

\begin{question}\label{quest:otherknots}
    For $n\geq 4$, are there knots in $S^n$ (other than unknotted ones) that are detected by their surgeries or traces? 
    Is there an infinite family? 
\end{question}

Note that in dimension $n = 3$, it is well known that \Cref{quest:otherknots} can be answered affirmatively~\cites{gabai:property_R,Ghiggini,BS:5_2,BS:nearly_fibred}. Indeed, every knot is detected by infinitely many of its surgeries~\cites{lackenby} (see also \cites{KS_unique_surgery}). We think that the weaker version for the traces would be interesting as well; see also \eg \cites{baldwin-sivek:traces-Lspace,baker-kegel-motegi:no-trace-detects} for work on the case when $n=3$.

\subsection{Traces vs surgeries}

In general, it would be interesting to understand the difference between equivalent traces and equivalent surgeries. In \Cref{theorem:non-isotopic,infinite_example_intro}, we constructed families of pairwise non-isotopic knots with diffeomorphic traces. The following question remains unsolved.

\begin{question}\label{quest:diffeo_surg_not_traces}
    For $n \geq 4$, are there any pairs of knots in $S^n$ with diffeomorphic surgeries, but whose traces are not diffeomorphic? 
\end{question}

Maybe a construction as described in \Cref{rem:surgeries} could help to answer \Cref{quest:diffeo_surg_not_traces}. Again, \Cref{quest:diffeo_surg_not_traces} is known to have a positive answer in dimension $n=3$, see for example~\cites{Akbulut1,Akbulut2,Akbulut3,yasui:corks-concordance,manolescu-piccirillo:RBG}. A related question is the following.

\begin{question}\label{ques:extension}
    For $n \geq 4$, do there exist knots $K_1$ and $K_2$ in $S^n$ with a surgery diffeomorphism $\varphi\colon S^n(K_1)\rightarrow S^n(K_2)$ that does not extend to a diffeomorphism of the traces?
\end{question}

According to \Cref{extension_intro} and \Cref{rem:extension}, it would be enough to construct a pair of knots such that $X(K_1) \cup_{\varphi} - X(K_2)$ is not spin in order to answer \Cref{ques:extension} positively. If, in addition, the mapping class group of the surgery $S^n(K_1)\cong S^n(K_2)$ is trivial, then we would also get a positive answer to \Cref{quest:diffeo_surg_not_traces}.

\subsection{Homeomorphisms vs diffeomorphisms} In dimension $n=3$, there exist knots that have homeomorphic but non-diffeomorphic traces, see for example~\cites{Akbulut1,Akbulut2,Akbulut3,yasui:corks-concordance}. In higher dimensions, this remains unclear.

\begin{question}\label{ques:home_vs_diffeo}
    Let $n\geq 4$. Do there exist knots in $S^n$ that have homeomorphic, but not diffeomorphic, traces, up to any reparametrisation of the knot? 
\end{question}

When $K_1$ is an $(n-2)$-knot, where $n\ge 8$ is such that $ \MCG({S^{n-2}}) \neq 1$, we can reparametrise $K_1$ using a non-trivial element of $\MCG({S^{n-2}}) $ to obtain a new knot~$K_2$ (see the discussion in \Cref{REM:embeddings}). The traces $X(K_1)$ and $X(K_2)$ will always be homeomorphic by the Alexander trick, but they will in general not be diffeomorphic. For example, if $K_1$ is the unknot and $K_2$ is a reparametrisation of~$K_1$, then we have $X(K_1) \cong S^{n-1} \times D^2$, whereas $ X(K_2) \cong \widetilde{S} \times D^2$, where $\widetilde{S}$ is an exotic~$S^{n-1}$. However, the two manifolds are not diffeomorphic, as can be seen by using the Product Structure Theorem (see Theorem~5.1 and Remark 2 thereafter in~\cite{KirbySiebenmann}*{Essay I}). The question remains open whether there are knots~$K_1$ and~$K_2$ such that for every possible reparametrisation of the knots, their traces are homeomorphic but not diffeomorphic.

For $n=4$, \Cref{cor:exotic_traces} shows that if there exists a trace homeomorphism that restricts to a diffeomorphism of the surgeries, then the traces are already diffeomorphic. So to answer \Cref{ques:home_vs_diffeo} positively, one needs to construct homeomorphic traces where no trace homeomorphism restricts to a surgery diffeomorphism. The proof of \Cref{cor:exotic_traces} depends on \Cref{extension_intro}, which we only proved in dimension $n=4$. See \Cref{rem:extension_higher} for comments on dimensions $n \geq 5$. 

\begin{question}\label{ques:extensions}
    For $n \geq 5$, let $K_1$ and $K_2$ be knots in $S^n$ and let $\varphi\colon S^n(K_1) \to S^n(K_2)$ be a surgery diffeomorphism such that $X(K_1) \cup_{\varphi} - X(K_2)$ is spin. Does $\varphi$ extend to a trace diffeomorphism $\Phi\colon X(K_1) \to X(K_2)$?
\end{question}

As mentioned above, this holds for $n=4$ by \Cref{extension_intro}. The converse direction is true in all dimensions, as discussed in \Cref{rem:extension}.

\subsection{Gluck twists}

Gluck twisting a knot in $S^4$ yields a smooth $4$-manifold $S^4_K$ which is always homeomorphic to $S^4$. For many knots $K$ it is known that $S^4_K$ is even diffeomorphic to $S^4$, but in general it remains unclear if $S^4_K$ is always diffeomorphic to the standard $4$-sphere (see \cref{rem:Gluck_manifold}). In higher dimensions, we could not find such statements in the literature. In \Cref{prop:std} we showed that for knots $K$ in~$S^n$ with $n\geq 5$, the Gluck twist manifold $S^n_K$ is diffeomorphic to $S^n$ if $K$ extends to an embedding $D^{n-1} \hookrightarrow D^{n+1}$.
In general, however, this seems to be open (see also \Cref{rem:slice}).

\begin{question}
    For $n \geq 4$, given a knot $K$ in $S^n$, is the Gluck twist $S^n_K$ of $S^n$ along~$K$ always diffeomorphic to the standard smooth $n$-sphere?
\end{question}

\subsection{Other codimensions}

In this article, we have concentrated on knots in codimension $2$. The RBG construction works in any codimension and we can thus ask many of the above questions for knotted spheres in any codimension. 

\begin{question}\label{ques:other_codimensions}
    Do there exist non-isotopic knotted $k$-spheres in $S^n$ that have diffeomorphic traces or surgeries for $k\neq n-2$? Can the Haefliger spheres~\cite{haefliger:knotted-spheres} be realised by the RBG construction?
\end{question}

\appendix

\section{Sage source code}\label{sec:appendix_sage}

Here we provide the Sage source code~\cites{GAP4,sagemath} used in \Cref{subsec:explicit_reps}.

\subsection{Counting representations}

The following function \emph{repr} takes as input a finitely presented group $H$, a finite group $A$, an element $q \in A$ and an integer $m \geq 0$, and computes the number of homomorphisms from $H$ to $A$ which map the $m$-th generator of $H$ (with labels starting from $0$) to $A$.
\begin{lstlisting}[language=Python]
from itertools import product
from sage.all import libgap, ConjugacyClass

def repr(H,A,q,m):
    R = 0			# R = counter of reps
    L = A.list()		# L = list of elements of A
    # T encodes the set of maps from the generators of H 
    # (minus the m-th one) to elements of A
    T = product(L,repeat=len(H.gens())-1)
    F = list(T)			# convert T into a list F
    for f in F:				
        I = list(f)     
        # add q such that the m-th generator is mapped to q
        I.insert(m,q)	
        # check if assignement defined by I defines a homom.
        if (libgap.fail!=libgap.GroupHomomorphismByImages
            (H,A,H.gens(),I)):
            R = R+1              # increase the counter R
    return R	
\end{lstlisting}

\subsection{Example} Here is an example of the usage of the function \emph{repr} to count representations of the group $F_1$ into $A_5$ as needed in the proof of \Cref{proposition:A_5_reps}.

\begin{lstlisting}[language=Python]
F.<x,y,a> = FreeGroup()
H = F/[x^-1*y*x*y*x^-1*y^-1, x^-1*a*x*a^-1*x^-1*y*a*y^-1]
A = AlternatingGroup(5)
q = A[1]                # -> q = (1,5,4,3,2)
R = repr(H,A,q,0)      # -> R = 6
S = repr(H,A,q,2)      # -> S = 1
\end{lstlisting}

\bibliographystyle{alpha}
\bibliography{Biblio}

\end{document}